\newtheorem{X}{X}[section]
\newtheorem{lemma}[X]{Lemma}
\newtheorem{proposition}[X]{Proposition}
\newtheorem{theorem}[X]{Theorem}
\def\SL{\operatorname{SL}}
\def\SO{\operatorname{SO}}
\renewcommand{\d}{{\rm d}}
\theoremstyle{definition}
\newtheorem{remark}[X]{Remark}
\newcommand{\RR}{\mathbb{R}}
\newcommand{\R}{\mathbb R}
\newcommand{\eps}{\varepsilon}
\newcommand{\sumstar}{\underset{\chi \bmod d}{{\sum}^*}}
\numberwithin{equation}{section}
\title[Low-lying zeros in families of Maass form $L$-functions]{Low-lying zeros in families of Maass form $L$-functions:\\ an extended density theorem}
\author{Martin \v Cech, Lucile Devin, Daniel Fiorilli, Kaisa Matom\"aki and Anders S\"odergren}
\address{Charles University, Faculty of Mathematics and Physics, Department of Mathematical Analysis\\
\rule[0ex]{0ex}{0ex}\hspace{8pt} and Department of Algebra, Sokolovsk\'{a} 83, 18600 Praha 8, Czech Republic}
\email{martin.cech@matfyz.cuni.cz}
\address{Univ. Littoral C\^ote d'Opale, UR 2597
	LMPA, Laboratoire de Math\'ematiques Pures et Appliqu\'ees \newline
	\rule[0ex]{0ex}{0ex}\hspace{8pt} Joseph Liouville,
	F-62100 Calais, France}
\email{lucile.devin@univ-littoral.fr}
\address{CNRS, Universit\'e Paris-Saclay, Laboratoire de math\'ematiques d'Orsay, 91405, Orsay, France}
\email{daniel.fiorilli@universite-paris-saclay.fr}
\address{Department of Mathematics and Statistics, University of Turku, 20014 Turku, Finland}
\email{ksmato@utu.fi}
\address{Department of Mathematical Sciences, Chalmers University of Technology and the University \newline
\rule[0ex]{0ex}{0ex}\hspace{8pt} of Gothenburg, SE-412 96 Gothenburg, Sweden}
\email{andesod@chalmers.se}
\date{\today}
\begin{document}

\begin{abstract}
We study the one-level density of low-lying zeros in the family of Maass form $L$-functions of prime level~$N$ tending to infinity.
Generalizing the influential work of Iwaniec, Luo and Sarnak to this context, Alpoge et al. have proven the Katz--Sarnak prediction for test functions whose Fourier transform is supported in $(-\frac32,\frac32)$. In this paper, we extend the unconditional admissible support to $(-\frac{15}8,\frac{15}8)$.
The key tools in our approach are analytic estimates for integrals appearing in the Kutznetsov trace formula, as well as a reduction to bounds on Dirichlet polynomials, which eventually are obtained from the large sieve and the fourth moment bound for Dirichlet $L$-functions. Assuming the Grand Density Conjecture, we extend the admissible support to $(-2,2)$. 
In addition, we show that the same techniques also allow for an unconditional improvement of the admissible support in the corresponding family of $L$-functions attached to holomorphic forms.
\end{abstract}
\maketitle

\section{Introduction and statement of results}

Since the work of Riemann \cite{Ri}, the study of the distribution of non-trivial zeros of $L$-functions has been a central question in number theory, due to their strong influence on prime numbers, number fields, elliptic curves, exponential sums and many other arithmetic objects.
In particular, the Generalized Riemann Hypothesis (GRH), stating that their real part is equal to $\frac12$, is one of the key open problems in modern mathematics.
Beyond their horizontal distribution, much attention has been directed towards understanding the vertical distribution, leading to interesting connections with random matrix theory (see e.g. \cite{Mo1, Hej, RS1, RS, Odl}).

In a similar direction, Katz and Sarnak \cite{KS} conjectured that the vertical distribution of low-lying zeros of $L$-functions in a family is governed by the distribution of eigenvalues of matrices in suitable compact Lie groups (see also the recent account \cite{SST}). As usual, by low-lying zeros we mean zeros with small imaginary part. One of the landmark papers in this topic is \cite{ILS}, in which families of $L$-functions attached to holomorphic modular forms and their symmetric squares were studied in the weight and level aspects. 
Refinements of this work include computations of lower-order terms~\cite{M, MM, RR1, DFS1}, as well as improvements of the admissible support~\cite{RR2, FKM, DFS2, BCL}.
These results have also been complemented by the works~\cite{AM, AAILMZ, LQ, MT} for $L$-functions attached to Maass forms, again in both the level and eigenvalue aspects.

The goal of the current paper is to extend the admissible support in the level aspect for both Maass form and holomorphic form families. Since our improvement of the support is more substantial in the Maass form case, this will be our main focus. 

\subsection{Maass forms}\label{ssec:Maass}
In order to state our results, we need to introduce some notation; for background on Maass forms and their $L$-functions, see e.g.~\cite{DFI, Iw2, KL}.
We let  $B^*(N)$ denote an orthogonal basis in the space of Maass cusp newforms of prime level~$N$ (with trivial nebentypus) which are eigenfunctions of the hyperbolic Laplacian, all Hecke operators, and the reflection operator sending a function $f(z)$ to $f(-\bar z)$. We denote the corresponding eigenvalues by $\lambda_f=\frac14+t_f^2$, $\lambda_f(n)$, $n\geq1$, and $\epsilon_f$, respectively. 
We write the Fourier expansion of $f\in B^*(N)$ at infinity as
\begin{align*}
f(x+iy)=\sqrt{y}\sum_{n\neq0}\rho_f(n)K_{it_f}(2\pi |n|y)e^{2\pi i nx},
\end{align*}
where $K_{\nu}(x)$ is the $K$-Bessel function, and we normalize so that $\rho_f(1) =1$, see e.g.~\cite[\S 4.1]{KL}. We recall the important relation $\rho_f( n)=\lambda_f(n)$, $n\geq 1$, between the Fourier coefficients and the Hecke eigenvalues of~$f$ (see \cite[\S 4.2]{KL}).  The $L$-function associated with $f$ is given by 
\begin{align}\label{Eq L-function}
L(s,f)=\sum_{n=1}^{\infty}\lambda_f(n)n^{-s}=\prod_{p} \Big( 1-\frac {\alpha_f(p)}{p^s}\Big)^{-1}\Big( 1-\frac {\beta_f(p)}{p^s}\Big)^{-1} = \prod_p \Big(1-\frac{\lambda_f(p)}{p^s} + \frac{1}{p^{2s}}\Big)^{-1}
\end{align}
for $\Re(s)>1$ and is known to extend to an entire function of $s$. We recall that the Kim--Sarnak bound \cite{KiS} gives
\begin{align}
  |\alpha_f(p)|,|\beta_f(p)|\leq p^{\frac7{64}}.
  \label{equation Kim-Sarnak}
\end{align}
Note also that $\beta_f(p) =0$ when $p\mid N$. Moreover, we have the functional equation 
\begin{align}\label{align:FE}
\Lambda_f(s):=\Big(\frac{N^{\frac 12}}\pi\Big)^{s}\Gamma\Big(\frac{s+\delta_f+it_f}{2}\Big)\Gamma\Big(\frac{s+\delta_f-it_f}{2}\Big)L(s,f)=\epsilon_f\Lambda_f(1-s),
\end{align}
where $\delta_f=0$ if $\epsilon_f=1$ (i.e. if $f$ is even) and $\delta_f=1$ if $\epsilon_f=-1$ (i.e. if $f$ is odd); see e.g.~\cite[\S 8]{DFI}. 

For technical reasons, we next introduce a class $\mathscr H(A,\delta)$ of weight functions which are suitable for application in the Kuznetsov trace formula (see, e.g., \cite[Chapter 8]{KL}). Here, $A>13$ and $\delta>0$ are fixed numbers, and the elements in $\mathscr H(A,\delta)$ are nonnegative even functions $h:\RR\to\RR$ which are not identically zero and extend to even holomorphic functions on the horizontal strip
$\{s : |\Im(s)|\leq A\}$ satisfying, uniformly for $|\Im(s)| \leq A$, 
\begin{align*}
h(s)\ll (1+|s|)^{-2-\delta}.
\end{align*}

In this paper, we will be interested in the one-level density of low-lying zeros in the family of $L$-functions $L(s,f)$ for $f\in B^*(N)$. First, for $h\in\mathscr H(A,\delta)$, we introduce the notation $$ \Omega^*(h,N):=\sum_{f\in B^*(N)}\omega_f(N) h(t_f)$$ for the total (weighted) mass of $h$, where  $\omega_f(N)$ is the spectral weight of $f$, which we define to be
$$\omega_f(N):=\frac{1}{\lVert f \rVert_N^2\cosh(\pi t_f)}.$$
Here, the Petersson norm is defined as
$$\lVert f \rVert_N^2 := \int_{\Gamma_0(N) \backslash \mathbb H} |f(x+iy)|^2 \frac{\d x \d y }{y^2}.  $$
Combining \cite{Iw} and \cite{GHL}, we also note that, for any $\varepsilon > 0$,
 $$ N ^{-1-\eps} \ll_{t_f,\eps} \omega_f(N) \ll_{t_f,\eps} N^{-1+\eps}.$$ Now, we let $\phi :\mathbb R \rightarrow \mathbb R$ be an even Schwartz test function, that is, $\phi$ is smooth and such that $(1+|x|)^\ell \phi^{(k)}(x) \ll_{\ell, k} 1$ for every $k, \ell \in \mathbb{N}$. Note that we may holomorphically extend $\phi$ to the whole complex plane
(see, e.g., [R, Section 19.1]).

Finally, for a fixed function $h\in \mathscr{H}(A,\delta)$, we define the one-level density
\begin{align}\label{weighted 1 level density}\mathcal D^*(\phi,h;N):= \frac 1{\Omega^*(h,N)}\sum_{f \in B^*(N)} \omega_f(N) h(t_f) \sum_{\gamma_f} \phi\Big( \gamma_f \frac{\log N}{2\pi} \Big),
\end{align}
where $\rho_f = \frac 12+i\gamma_f$ runs through the non-trivial zeros of $L(s, f)$ (we stress that $\gamma_f$ need not be real). Note that we normalize by $\log N$ instead of the logarithm of the analytic conductor (which is approximately $t^2_fN$; see \cite[Section 5.11]{IK}), since these two quantities are asymptotically equivalent for $f$ in the bulk of the outer sum in \eqref{weighted 1 level density} (as $h(t_f)$ essentially restricts $t_f$ to be small).
In the following, we will allow all implicit constants to depend on $h$.

For the present family of Maass form $L$-functions, the Katz--Sarnak prediction (see~\cite{KS} and~\cite[Conjecture 2]{SST}) is that $\mathcal D^*(\phi,h;N)$ converges, as $N\to\infty$, to the random matrix integral $$\int_{\mathbb R} W(O)(x) \phi(x) \d x,$$ where $W(O)(x)=1+\frac 12\delta_0(x)$ is the limit density of rescaled eigenvalues close to $1$ for large random orthogonal matrices. This prediction has been confirmed by Alpoge, Amersi, Iyer, Lazarev, Miller, and Zhang \cite{AAILMZ} for test functions $\phi$ whose Fourier transform $\widehat \phi$ satisfy ${\rm supp}(\widehat \phi)\subset(-\frac32,\frac32)$. Extending the admissible support in density results is one of the central problems in this subject, due in particular to important arithmetic consequences such as bounds on the average order of vanishing at the central point; see, e.g., \cite{ILS,Y} as well as the more recent papers \cite{FKM,DPR,DFS2,BCL}. In this paper, we prove such a result: our main theorem shows that the Katz--Sarnak prediction for the current family holds in the extended range ${\rm supp}(\widehat \phi)\subset (-\frac{15}8 ,\frac{15}8)$.

\begin{theorem}\label{theorem main}
Let $\phi$ be an even Schwartz function for which  ${\rm supp}(\widehat \phi) \subset (-\frac{15}8 ,\frac{15}8)$. Then, for any fixed $h\in \mathscr{H}(A,\delta)$ and for $N$ running through the set of prime numbers, we have the estimate 
\begin{equation}
\mathcal D^*(\phi,h;N) =  \int_{\mathbb R} W(O)(x) \phi(x) \d x  +o_{N\rightarrow \infty}(1).
\label{equation main theorem}
\end{equation}
\end{theorem}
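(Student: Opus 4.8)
The plan is to follow the classical Iwaniec--Luo--Sarnak strategy, applying the explicit formula to convert the sum over zeros in $\mathcal D^*(\phi,h;N)$ into a sum over prime powers weighted by Hecke eigenvalues $\lambda_f(p^\nu)$, and then to average over $f\in B^*(N)$ using the Kuznetsov trace formula. First I would write, via the explicit formula for $L(s,f)$ with the functional equation \eqref{align:FE}, the identity
\[
\sum_{\gamma_f}\phi\Big(\gamma_f\tfrac{\log N}{2\pi}\Big)
=\widehat\phi(0)+\tfrac12\phi(0)\,\delta_{\epsilon_f}
-\frac{2}{\log N}\sum_{\nu\ge1}\sum_p\frac{\lambda_f(p^\nu)\log p}{p^{\nu/2}}\,\widehat\phi\Big(\tfrac{\nu\log p}{\log N}\Big)+(\text{gamma-factor terms}),
\]
where the archimedean contribution produces the main term $\int W(O)\phi$ after averaging (the $\tfrac12\phi(0)$ coming from the even/odd split via $\delta_f$). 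The diagonal $\nu=1$ prime sum contributes nothing in the limit (it is $O((\log N)^{-1})$ after trivial bounds), and $\nu\ge3$ is negligible by the Kim--Sarnak bound \eqref{equation Kim-Sarnak} together with $\mathrm{supp}(\widehat\phi)\subset(-\tfrac{15}8,\tfrac{15}8)$, so the heart of the matter is the $\nu=2$ term, i.e. the average of $\sum_p \lambda_f(p^2)p^{-1}\log p\,\widehat\phi(2\log p/\log N)$ over $p\le N^{15/16}$.

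Next I would insert the Kuznetsov trace formula for the level-$N$ newform average weighted by $\omega_f(N)h(t_f)$, using Hecke multiplicativity $\lambda_f(p^2)=\lambda_f(p)^2-1$ to reduce $\lambda_f(p^2)$ to the $n=p^2$ Fourier coefficient. This splits the average into a diagonal term (which, after subtracting the $-1$, recombines with the archimedean side and with the Eisenstein/old-form corrections to give the predicted $W(O)$) and an off-diagonal sum of Kloosterman sums $S(p^2,1;cN)$ against a Bessel-type integral transform $h^{\sharp}$ of $h$. The sieving to newforms of prime level introduces the usual inclusion-exclusion over $\ell\mid N$ (here just $\ell\in\{1,N\}$ since $N$ is prime), contributing a secondary sum that is handled similarly but with an extra saving in $N$. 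The technical analytic input here is precise control of the Bessel transform $h^{\sharp}(x)$: one needs that it is essentially supported in a range of $x$ that, combined with $c\ge1$ and the size of $p^2$, forces $p^2\gg cN$, which is exactly where the extended support $15/8$ (rather than $3/2$) can be accommodated provided one does not bound the resulting Kloosterman/Dirichlet-polynomial sum trivially.

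The main obstacle — and where the new ideas of the paper must enter — is estimating the off-diagonal contribution
\[
\sum_{p\le N^{15/16}}\frac{\log p}{p}\,\widehat\phi\Big(\tfrac{2\log p}{\log N}\Big)\sum_{c\ge1}\frac{S(p^2,1;cN)}{cN}\,h^{\sharp}\!\Big(\tfrac{4\pi p}{cN}\Big)
\]
and showing it is $o(1)$; the trivial Weil bound only gives support up to $3/2$, matching \cite{AAILMZ}. The plan is to open the Kloosterman sum, apply Poisson summation in the modulus or reciprocity to expose additive characters, and recast the problem as a bound on a Dirichlet polynomial of the shape $\sum_{d}\big|\sum_{p}\lambda(p)\chi(p)p^{it}\big|$-type sums over characters $\chi\bmod d$; the large sieve inequality for multiplicative characters then handles most ranges, while the remaining short-modulus contribution is controlled by the fourth moment bound for Dirichlet $L$-functions $\sum_{\chi\bmod d}|L(\tfrac12+it,\chi)|^4\ll d^{1+\eps}$. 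Balancing the large-sieve range against the fourth-moment range is what produces the numerical value $\tfrac{15}8$, and under the Grand Density Conjecture the same argument yields $(-2,2)$ since one then has essentially square-root cancellation in the relevant character sums. Assembling the diagonal main term, verifying the $\delta_f$ split reproduces $1+\tfrac12\delta_0$, and checking all error terms are $o(1)$ in the stated range completes the proof of \eqref{equation main theorem}.
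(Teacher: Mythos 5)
There is a genuine gap, and it is structural: you have misidentified which part of the prime-power sum is the bottleneck, and you omit the single most important new ingredient of the argument (Heath--Brown's identity).

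After the explicit formula (Lemma~\ref{Lemma Explicit Formula}) the quantity to control is $-\frac{2}{\log N}\sum_{p,\nu}\lambda_f(p^\nu)p^{-\nu/2}\widehat\phi(\nu\log p/\log N)\log p$, and the $\nu=2$ piece is the \emph{easy} one: by the Hecke relation $\alpha_f^2+\beta_f^2=\lambda_f(p^2)-\lambda_f(1)$ the shifted term $\lambda_f(p^{\nu-2})=1$ produces $\frac{2}{\log N}\sum_p\frac{\log p}{p}\widehat\phi(2\log p/\log N)=\frac{\phi(0)}{2}+O(1/\log N)$ by the prime number theorem, which is exactly the $\frac12\delta_0$ in $W(O)$. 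The hard term is the $\nu=1$ sum over $p\le N^{15/8}$, and your assertion that it is ``$O((\log N)^{-1})$ after trivial bounds'' cannot be right: if it were, the theorem would hold for arbitrary support and the whole problem would be trivial. After Kuznetsov (Lemmas~\ref{corollary kim sarnak}--\ref{Kuznetsov1}) and expanding Kloosterman sums into Gauss sums via orthogonality (not Poisson summation), one arrives at Proposition~\ref{Prop truncation}, and the Dirichlet polynomial one must bound is
\[
\sumstar\Bigg|\sum_{n\le N^{15/8}}\frac{\Lambda(n)\chi(n)}{n^{1/2}}\widehat\phi\Big(\frac{\log n}{\log N}\Big)H^+\Big(\frac{4\pi n^{1/2}}{c}\Big)\Bigg|
\]
for $d\mid c$, $N\mid c$, $c<N^2$ --- that is, a length-$N^{15/8}$ sum over primes (the weight is $n^{-1/2}$, not $n^{-1}$; the relevant Kloosterman sums are $S(p,1;c)$, not $S(p^2,1;cN)$). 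Your displayed ``main obstacle'' has the $\nu=2$ normalization $p^{-1}$, length $N^{15/16}$, and $S(p^2,1;cN)$, none of which match.

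The other missing idea is Heath--Brown's identity. Your sketch jumps from the Dirichlet polynomial directly to ``large sieve in most ranges, fourth moment for short moduli,'' but $\Lambda(n)$ is not amenable to either tool without first decomposing it into bilinear and linear pieces. The paper applies Heath-Brown's identity with $K=20$ to split $\Lambda$ into $\le 40$ factors, then uses Lemma~\ref{le:splitting} to find either a balanced factorization (Cauchy--Schwarz plus the mean-value estimate of Lemma~\ref{lemma orthogonality}) or two long factors of type $1$ or $\log$ (H\"older plus the fourth-moment estimate of Lemma~\ref{lemma fourth moment}). This combinatorial dichotomy is precisely what produces $\frac{15}{8}$, and, as explained in Remark~\ref{remark HB beats zeros}, replacing it with a zero-density argument --- which is closer to what you describe --- only yields $1+\frac{\sqrt3}{2}<\frac{15}{8}$. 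As written, your proposal cannot reach the claimed support.
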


Let us give a brief outline of the proof of Theorem~\ref{theorem main}. We first apply the explicit formula
and express the one-level density $\mathcal{D}^\ast(\phi, h; N)$ as a sum over eigenvalues of Hecke operators at prime
power values. Averaging over the family of newforms of prime level, we apply the Kuznetsov formula
and turn this last expression into a weighted sum of Kloosterman sums. The next step is to rewrite the Kloosterman sums in terms of Dirichlet characters and Gauss sums using orthogonality.  After applying Mellin inversion, we end up having to bound a weighted version of the following average of Dirichlet polynomials: 
$$\int_{-\infty}^\infty\sumstar\left|\sum_{n \leq N^{\frac{15}8-\varepsilon}} \frac{\Lambda(n) \chi(n)}{ n^{ \frac 12+it}}\right|\frac{\d t}{t^2+1}. $$
At this point, we apply Heath-Brown's identity (actually, for technical reasons, we apply it already before Mellin inversion) to decompose $\Lambda(n)$ as a sum of various convolutions. Then we obtain the desired bound by using H\"older's inequality, the large sieve and the fourth moment bound for Dirichlet $L$-functions.

In an earlier paper \cite{DFS2}, the second, third, and fifth authors proved a result analogous to Theorem~\ref{theorem main} in the family of holomorphic cusp forms in the level aspect using zero-density estimates for Dirichlet $L$-functions instead of applying Heath-Brown's identity (naturally other details of the proof are different as well, such as using the Peterson formula instead of the Kuznetsov formula). The zero density estimates could be used in the Maass form case as well, however the resulting admissible range for  ${\rm supp}(\widehat \phi)$ would be $(-1-\frac{\sqrt{3}}{2}, 1+\frac{\sqrt{3}}{2})$, which is slightly narrower than the range in Theorem~\ref{theorem main}. 
See Remark~\ref{remark HB beats zeros} for more on the relation between the two approaches.

It turns out that in the holomorphic case, one can also apply Heath-Brown's identity and Dirichlet polynomial estimates to obtain a larger support; we will state our theorem in this context in the following subsection.

\subsection{Holomorphic forms} \label{ssec:introholom}
In this subsection we recall the notation of~\cite{DFS2}.
We fix a basis $B_k^\ast(N)$ of Hecke eigenforms of the space $H_k^\ast(N)$ of newforms of prime level $N$ and weight $k$. We normalize so that, for every
\begin{align*}
  f(z) = \sum_{n = 1}^\infty \lambda_f(n) n^{\frac{k-1}{2}} e^{2\pi i n z} \in B_k^\ast(N),
  \end{align*}
  we have $\lambda_f(1) = 1$. We use the harmonic weights defined as
  \begin{align*}
\omega_{f,k}(N) := \frac{\Gamma(k-1)}{(4\pi)^{k-1}(f, f)_{k,N}},
  \end{align*}
  with
  \begin{align*}
(f, f)_{k,N} := \int_{\Gamma_0(N) \setminus \mathbb{H}} y^{k-2} |f(z)|^2 \d x \d y.
  \end{align*}
  The object we are interested in is the one-level density
  \begin{align*}
    \mathcal{D}_{k, N}^\ast(\phi; X) := \frac{1}{\Omega_k(N)} \sum_{f \in B_k^\ast(N)} \omega_{f,k}(N) \sum_{\gamma_f} \phi\left(\gamma_f \frac{\log X}{2\pi}\right),
  \end{align*}
  where $\rho_f = \frac 12+i\gamma_f$ runs through the non-trivial zeros of $L(s, f)$, $\phi$ is an even Schwartz function whose Fourier transform is compactly supported, $X = k^2N$ is the size of the analytic conductor of $L(s, f)$, and the total weight is
  \begin{align*}
    \Omega_k(N) := \sum_{f \in B_k^\ast(N)} \omega_{f,k}(N).
    \end{align*}
    Writing
    \begin{equation}\label{eq:Thetakdef}
      \Theta_k := 2-\frac{1}{5k-2},
    \end{equation}
    we obtain the following theorem.

\begin{theorem}\label{theorem main holom}
Let $k \in \mathbb{N}$ be even and let $\phi$ be an even Schwartz function for which ${\rm supp}(\widehat \phi) \subset (-\Theta_k, \Theta_k)$. Then, for $N$ running through the set of prime numbers, we have the estimate 
\begin{equation*}
  \mathcal D_{k, N}^\ast(\phi,k^2 N) =  \int_{\mathbb R} W(O)(x) \phi(x) \d x  +o_{N\rightarrow \infty}(1).
\end{equation*}
\end{theorem}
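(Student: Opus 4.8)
The plan is to follow the architecture of the proof of Theorem~\ref{theorem main}, with the Kuznetsov formula replaced by the Petersson trace formula; this is in fact a technically lighter setting, as there is no continuous spectrum and no weight class $\mathscr H(A,\delta)$ to control, so the essential novelty over \cite{DFS2} is the use of Heath-Brown's identity and Dirichlet polynomial estimates in place of zero-density bounds. First I would apply the explicit formula to each $L(s,f)$, $f\in B_k^\ast(N)$. Since the analytic conductor of $L(s,f)$ is $\asymp k^2N=X$, this expresses $\mathcal D_{k,N}^\ast(\phi;k^2N)$ as $\widehat\phi(0)+\tfrac12\phi(0)+o(1)$ — which equals $\int_\RR W(O)(x)\phi(x)\,\d x+o(1)$, the term $\tfrac12\phi(0)$ arising as in \cite{ILS} from the prime squares via $\alpha_f(p)^2+\beta_f(p)^2=\lambda_f(p^2)-1$ and Mertens' theorem — minus $\tfrac2{\log X}\,\Omega_k(N)^{-1}$ times $\sum_{f}\omega_{f,k}(N)\sum_{p,\,\nu\ge1}\frac{(\alpha_f(p)^\nu+\beta_f(p)^\nu)\log p}{p^{\nu/2}}\widehat\phi\big(\tfrac{\nu\log p}{\log X}\big)$. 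Only $p^\nu\le X^{\Theta_k}$ contribute, the terms with $\nu\ge3$ are $O(1/\log X)$ by \eqref{equation Kim-Sarnak}, and the $p=N$ terms are negligible, so it remains to bound the $\nu\in\{1,2\}$ contributions. For these I would express the newform average via the Petersson formula at the levels dividing $N$ (that is, $1$ and $N$), as in \cite{ILS, DFS2}; the diagonal $\delta_{p^\nu=1}$ then vanishes, the level-$1$ old part (appearing with weight $\asymp N^{-1}$) is disposed of as in \cite{DFS2}, and what remains is a weighted sum of Kloosterman sums
\[
\sum_{c\equiv 0\ (\mathrm{mod}\ N)}\frac{S(p^\nu,1;c)}{c}\,J_{k-1}\!\Big(\frac{4\pi p^{\nu/2}}{c}\Big).
\]

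Next I would analyse the Bessel factor. Since $c\ge N$ and $p^\nu\le X^{\Theta_k}=N^{\Theta_k+o(1)}$ with $\Theta_k<2$, the argument $4\pi p^{\nu/2}/c$ tends to $0$, so $J_{k-1}$ lies in its small-argument range, where $J_{k-1}(x)\ll_k x^{k-1}$; in particular the $c$-sum is rapidly convergent and dominated by moduli just above~$N$, so the relevant Dirichlet characters have modulus $\asymp N$. I would then open $S(p^\nu,1;c)$ via orthogonality into Dirichlet characters modulo divisors of~$c$ and Gauss sums, sum over $c$, and apply Mellin inversion to the smooth weight $\widehat\phi$ and to $J_{k-1}$ (whose Mellin transform is $2^{s-1}\Gamma(\tfrac{k-1+s}2)/\Gamma(\tfrac{k+1-s}2)$). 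As in the outline of Theorem~\ref{theorem main}, the task reduces to showing that a weighted average of the shape
\[
\sum_{d}\int_{-\infty}^{\infty}\sumstar\ \Big|\sum_{n\le N^{\Theta_k-\eps}}\frac{\Lambda(n)\chi(n)}{n^{1/2+it}}\Big|\,\frac{\d t}{t^2+1}
\]
(with $d$ running over moduli $\asymp N$) is $o(\log N)$; the power of $p^{\nu/2}$ from the Bessel factor, together with the ratio of Gamma factors $\Gamma(\tfrac{k-1+s}2)/\Gamma(\tfrac{k+1-s}2)$, governs the precise $t$-weight and is the origin of the dependence of $\Theta_k$ on $k$. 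The trivial estimate, using only Weil's bound for $S(p^\nu,1;c)$ and $J_{k-1}(x)\ll_k x^{k-1}$, already gives the admissible range $\Theta_k<2-\tfrac1k$; improving on this is precisely where the Dirichlet polynomial estimates enter.

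To bound the above average I would apply Heath-Brown's identity — for technical reasons already before the Mellin inversion of the previous step — with parameter $J$ chosen so that $N^{\Theta_k}\le z^{2J}$ for a suitable $z$, decomposing $\Lambda(n)$ into $O(\log^{2J}n)$ convolutions of at most $2J$ factors (short M\"obius factors, long ``$1$''-factors, and one $\log$-factor). After dyadically splitting the factor lengths $M_1,\dots,M_r$ (with $M_1\cdots M_r\asymp N^{\Theta_k}$) and the variable $t$, each resulting piece is a product of character-twisted Dirichlet polynomials, whose contribution I would bound by H\"older's inequality, feeding each group of factors into either the hybrid large sieve inequality
\[
\int_{-T}^{T}\sum_{d\le D}\sumstar\Big|\sum_{n\sim M}a_n\chi(n)n^{-it}\Big|^2\d t\ \ll\ (DT+M)\sum_{n\sim M}|a_n|^2,
\]
the fourth-moment bound $\sum_{d\le D}\sumstar\int_{-T}^{T}|L(\tfrac12+it,\chi)|^4\,\d t\ll(DT)^{1+\eps}$ (applied to the long ``$1$''-factors), or a trivial estimate (for the short factors), with $D=N^{1+o(1)}$ and $T=N^{o(1)}$.

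The hard part will be the resulting combinatorial optimization: for every arrangement of the dyadic lengths one must choose how to group the Heath-Brown factors and which mean-value input to apply so that no configuration exceeds $N^{1-\delta}$. The extremal (``balanced Type~II'') configurations are precisely those where the large sieve alone is insufficient and the fourth moment must be invoked, and it is the interplay of these two inputs with the coefficient reweighting coming from $J_{k-1}$ that produces the threshold $\Theta_k=2-\tfrac1{5k-2}$ (so that, for instance, $\Theta_2=\tfrac{15}8$, matching the support obtained in Theorem~\ref{theorem main}). Since $k$ is fixed throughout, all implied constants may depend on $k$ (as well as on $\phi$ and $\eps$), which removes any need for uniformity in $k$; the remaining points — the uniform treatment of the newform sieving for $N$ prime, the justification of the Mellin manipulations and of the truncation of the $t$-integral, and the disposal of the $\nu\ge3$ and diagonal terms — are routine and handled as in \cite{ILS, DFS2}.
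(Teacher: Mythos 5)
Your proposal follows essentially the same route as the paper's Section~\ref{sec:holom}: apply the explicit formula and the Petersson formula, open the Kloosterman sums into Dirichlet characters, use the small-argument estimate $J_{k-1}(x)\ll_k x^{k-1}$ to obtain a gain of $c^{k-1}$ in the Mellin transform (which is exactly where the $k$-dependence of $\Theta_k$ enters), then apply Heath-Brown's identity together with the hybrid large sieve and the fourth moment for Dirichlet $L$-functions, with the final threshold $\Theta_k = 2-\frac{1}{5k-2}$ emerging from the same balanced Type~II bottleneck you identify. The only ingredient you gesture at rather than spell out is the explicit combinatorial splitting (the analogue of Lemma~\ref{le:splitting gen}) that sorts the Heath-Brown factors into the two-moment cases, but your outline of where and why the optimization is tight matches what the paper does.
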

This improves upon the previous work of the second, third, and fifth authors~\cite[Theorem 1.1]{DFS2} which required ${\rm supp}(\widehat \phi) \subset (-\Theta_k', \Theta_k')$ with
\[
  \Theta_k' :=
  \begin{cases}
    1+\frac{\sqrt{3}}{2} & \text{if $k = 2$;} \\
    2 - \frac{1}{5k-\frac{5}{2}} & \text{if $k \geq 4$}.
    \end{cases}
\] 
\subsection{A conditional result}
Now we return to the Maass form set-up from Section~\ref{ssec:Maass}. In the next result, we show that under a classical zero-density conjecture (which is significantly weaker than the GRH), the admissible support of $\widehat \phi$ can be extended to the range $(-2,2)$. Iwaniec and Kowalski~\cite[(10.7)]{IK} have coined the term Grand Density Conjecture for the following claim: For every $\beta \in [1/2, 1]$, $Q \geq 1$, and $k \in \mathbb{N}$, we have 
\begin{equation}
\sum_{\substack{q\leq Q \\ (q,k)=1}}\underset{\psi \bmod q}{{\sum}^*} \hspace{.2cm} \sum_{\xi \bmod k} N(\beta,T,\xi\psi) \ll_\eps (kQ^2T)^{2(1-\beta)}(\log kQT)^{O(1)},
\label{bound grand density}
\end{equation}  
where for a Dirichlet character $\chi\bmod q$,
$$N(\beta,T,\chi):= \#\lbrace \rho_\chi \in \mathbb{C} : \Re(\rho_\chi) \geq  \beta, \lvert\Im(\rho_\chi)\rvert \leq T , L(\rho_\chi,\chi) =0\rbrace,$$
and the star on the sum means that the sum runs over primitive characters. 

\begin{theorem}
\label{theorem second}
Let $\phi$ be an even Schwartz function for which  ${\rm supp}(\widehat \phi) \subset (-2 ,2)$, and assume the Grand Density Conjecture~\eqref{bound grand density}.
Then~\eqref{equation main theorem} holds.
\end{theorem}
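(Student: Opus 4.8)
The plan is to run the proof of Theorem~\ref{theorem main} through its first stages unchanged --- the explicit formula, the Kuznetsov formula for the newform spectrum of prime level $N$, and the expansion of the resulting Kloosterman sums into Dirichlet characters and Gauss sums --- and then to replace the step that invokes Heath--Brown's identity and bounds on Dirichlet polynomials by an argument based directly on the zeros of $L(s,\chi)$ and on~\eqref{bound grand density}; this is the zero-density route of~\cite{DFS2}, pushed to its conjectural limit. After these first stages, and the standard treatment of the Katz--Sarnak main term and of the Eisenstein and oldform contributions, $\mathcal D^*(\phi,h;N)$ equals $\int_{\mathbb R}W(O)(x)\phi(x)\,\d x$ plus error terms whose dominant part, coming from the modulus $c=N$ in the Kuznetsov formula, is of the shape
\[
\frac{C}{N^{3}\log N}\sum_{\chi\bmod N}\tau(\chi)^{2}\,\psi_\phi\big(N^{2-\eps};\overline\chi\big),\qquad\text{where}\quad\psi_\phi(X;\chi):=\sum_{n\geq1}\Lambda(n)\chi(n)\,\widehat\phi\Big(\tfrac{\log n}{\log N}\Big),
\]
together with analogous contributions from the moduli $c=Nc_0$ with $c_0\geq2$. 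Since $|\tau(\chi)|^{2}=N$ for $\chi\neq\chi_0$ and $|\tau(\chi_0)|^{2}=1$, it is enough to prove $\sum_{\chi\bmod N}|\psi_\phi(N^{2-\eps};\chi)|=o(N^{2}\log N)$, and the corresponding statements for the characters of modulus $Nc_0$.

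The key point is to estimate $\sum_{\chi\bmod N}|\psi_\phi(X;\chi)|$ with $X=N^{2-\eps}$ by means of the explicit formula for $L(s,\chi)$ rather than the large sieve. Writing the smooth cutoff through its Mellin transform, $\widetilde g(s)=\log N\cdot\phi\big(\tfrac{s\log N}{2\pi i}\big)$, and moving the contour past the zeros of $L(s,\chi)$, one obtains for primitive $\chi\neq\chi_0$
\[
\psi_\phi(X;\chi)=-\sum_{\rho}\widetilde g(\rho)+O\big(N^{1-\eps/2}\log N\big),
\]
the sum being over the zeros $\rho=\beta+i\gamma$ of $L(s,\chi)$ with $\Re\rho>-\tfrac12$. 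As $\phi$ is Schwartz with $\operatorname{supp}\widehat\phi$ a compact subset of $(-2,2)$, the Paley--Wiener bound gives $|\widetilde g(\rho)|\ll_{A}X^{\beta}\log N\,(1+|\gamma|\log N)^{-A}$ for every $A>0$, so the zeros that matter are those with $|\gamma|\ll(\log N)^{-1}$. Summing over $\chi\bmod N$, decomposing dyadically in $|\gamma|$ and in $\beta\in[\tfrac12,1]$, and applying~\eqref{bound grand density} with $k=N$, $Q=1$ and $T$ of the size of the dyadic height yields
\[
\sum_{\chi\bmod N}\sum_{\rho}|\widetilde g(\rho)|\ll(\log N)^{O(1)}\max_{1/2\leq\sigma\leq1}X^{\sigma}N^{2(1-\sigma)}=(\log N)^{O(1)}\max_{1/2\leq\sigma\leq1}N^{\,2-(2-\nu)\sigma},\qquad\nu:=2-\eps.
\]
Because $\nu<2$, the exponent $2-(2-\nu)\sigma$ is largest at the endpoint $\sigma=\tfrac12$, where it equals $1+\tfrac\nu2=2-\tfrac\eps2$; hence $\sum_{\chi\bmod N}|\psi_\phi(X;\chi)|\ll N^{2-\eps/2}(\log N)^{O(1)}=o(N^{2}\log N)$, as needed. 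It is exactly this optimisation at the edge $\sigma=\tfrac12$ that fixes the threshold $2$ for the admissible support; with the weaker density exponents that are currently known unconditionally the optimisation yields a value below $2$ (indeed below $\tfrac{15}8$), which is why Theorem~\ref{theorem main} is proved by a different route.

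The remaining, and I expect most delicate, task is to treat the terms with modulus $c=Nc_0$, $c_0\geq2$, uniformly in $c_0$. Splitting $Nc_0$ by the Chinese remainder theorem, the Kloosterman and Gauss sums are controlled by products $\xi\psi$ of a character $\xi$ modulo $N$ with a primitive character $\psi$ modulo $c_0$ --- which is precisely the combination over which~\eqref{bound grand density} is formulated (with $q=c_0\leq Q$ and $k=N$). The weight attached to each $c_0$ is governed by the behaviour near the origin of the Bessel-type integral transform $\mathcal H^{+}$ of $h$ arising from the Kuznetsov formula, for which only the bound $\mathcal H^{+}(y)\ll y$ is available (and not faster decay); this produces a saving of size $c_0^{-2}$, which has to be balanced against the polynomial growth in $Q$ in~\eqref{bound grand density} and supplemented, for the largest $c_0$, by the large sieve, in order to keep the sum over $c_0$ convergent. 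Carrying this out, together with the analytic estimates on $\mathcal H^{+}$ used throughout (these are where the hypothesis $A>13$ on the weight class $\mathscr H(A,\delta)$ enters), completes the argument.
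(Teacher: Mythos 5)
Your proposal is correct and takes essentially the same route as the paper: after the explicit formula, the Kuznetsov formula and the character expansion of the Kloosterman sums, both arguments shift a contour past the zeros of the Dirichlet $L$-functions and invoke the Grand Density Conjecture~\eqref{bound grand density}, with the optimisation at $\sigma=\tfrac12$ producing the threshold $2$. The paper runs the contour shift through the Mellin transform $\Psi_{\phi,N,c,h}$, which carries the $H^+$ weight, thereby reusing~\cite[Proposition 3.4]{DFS2} with $k=2$ verbatim, whereas you approximate $H^+$ by its leading linear behaviour near the origin and work with the bare sum $\psi_\phi$; this is a difference of bookkeeping only, and the full-Mellin version in the paper is the cleaner way to make your heuristic rigorous.
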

An analogous result also holds for holomorphic forms (see~\cite[Remark 4.2]{DFS2}).
We will deduce Theorem~\ref{theorem second} in the end of Section~\ref{sec:Kuznetsov} by applying similar techniques combined with analytic estimates on integral transforms appearing in the Kuznetsov formula (that we use also in the proof of Theorem~\ref{theorem main}).

\section*{Acknowledgments}

We are grateful to Didier Lesesvre and Morten Risager for helpful discussions. This material is based upon work partially supported by the Swedish Research Council under grant no.\ 2021-06594 while the authors were in residence at the Institut Mittag--Leffler in Djursholm, Sweden during the spring semester of 2024. The first author was supported by the Research Council of Finland grant no. 333707 and by the Charles University grants PRIMUS/24/SCI/010 and PRIMUS/25/SCI/017. The second author was partially supported by the grant KAW 2019.0517 from the Knut and Alice Wallenberg Foundation and by the PEPS JCJC 2023 program of the INSMI (CNRS). The fourth author was partially supported by the Research Council of Finland grants no. 333707 and 346307. The fifth author was supported by the grant 2021-04605 from the Swedish Research Council. We also thank the Anna-Greta and Holger Crafoord Fund and the Royal Swedish Academy of Sciences for supporting this project via the grant CRM2020-0008, as well as the IHP Research in Paris program for providing funding and excellent working conditions.

\section{The Kuznetsov trace formula and analytic estimates}\label{sec:Kuznetsov}

We work in the set-up of Section~\ref{ssec:Maass}. We recall that $B^*(N)$ is an orthogonal basis of Maass cusp newforms of level $N$. The following lemma gives an explicit formula for forms in $B^\ast(N)$, relating a sum over zeroes of the corresponding $L$-function to a sum of Hecke eigenvalues at prime powers.

\begin{lemma}\label{Lemma Explicit Formula}
Let $N$ be a prime, let $f\in B^*(N)$ be a Hecke-Maass newform, and let $\phi$ be an even Schwartz test function whose Fourier transform has bounded support. Then

\begin{equation}
\sum_{\gamma_f}\phi\Big(\gamma_f\frac{\log N}{2\pi}\Big)=\widehat \phi(0)  +\frac{\phi(0)}2 -2\sum_{\substack{p,\nu  }} \frac{\lambda_f(p^{\nu}) }{p^{\frac \nu 2}} \widehat \phi \Big( \frac{\nu \log p}{\log N} \Big) \frac{\log p}{\log N}
+O\Big(\frac {\log (|t_f|+2)}{\log N}\Big).
\label{equation lemma explicit formula}    
\end{equation}
Moreover, if $f\in B^*(1)$ and $X\geq 2$ is a real parameter, then 
\begin{equation}
\sum_{\gamma_f}\phi\Big(\gamma_f\frac{\log X}{2\pi}\Big)=\frac{\phi(0)}2-2\sum_{\substack{p,\nu  }} \frac{\lambda_f(p^{\nu}) }{p^{\frac \nu 2}} \widehat \phi \Big( \frac{\nu \log p}{\log X} \Big) \frac{\log p}{\log X}+O\Big(\frac {\log (|t_f|+2)}{\log X}\Big).
\label{equation lemma explicit formula 2}    
\end{equation}
\end{lemma}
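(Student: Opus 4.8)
This is a standard explicit formula computation, so I would start from the completed $L$-function and its functional equation \eqref{align:FE}. First I would take the logarithmic derivative of $\Lambda_f(s)$ and integrate $\frac{\Lambda_f'}{\Lambda_f}(s)\Phi(s)$ around a tall rectangle, where $\Phi$ is the holomorphic extension of $x\mapsto \phi\big(x\tfrac{\log N}{2\pi i}\big)$ (equivalently, I would directly use the Cauchy residue theorem / argument principle, moving a contour from $\Re(s)=1+\sigma$ to $\Re(s)=-\sigma$ for small $\sigma>0$). Since $L(s,f)$ is entire and $\Lambda_f$ has no poles, the only contribution on the left side is the sum over the non-trivial zeros $\rho_f=\tfrac12+i\gamma_f$, which is exactly $\sum_{\gamma_f}\phi\big(\gamma_f\tfrac{\log N}{2\pi}\big)$ after the change of variables. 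One has to be slightly careful that $\gamma_f$ need not be real (a possible exceptional zero coming from a small $t_f$), but since $\phi$ is Schwartz and extends holomorphically, and $|\gamma_f|$ is then $O(t_f)$ by the Kim–Sarnak bound \eqref{equation Kim-Sarnak}, the corresponding term is still $O\big(\tfrac{\log(|t_f|+2)}{\log N}\big)$ and gets absorbed into the error.

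**The three pieces.** The right-hand side of the contour shift decomposes into three contributions. (i) The \emph{gamma factors}: from $\big(\tfrac{N^{1/2}}{\pi}\big)^s$ we pick up the main term $\widehat\phi(0)$ (this is where the normalization by $\log N$ rather than $\log(t_f^2 N)$ matters — the $t_f^2$ part contributes only $O(\tfrac{\log(|t_f|+2)}{\log N})$), and from the two $\Gamma$-factors $\Gamma\big(\tfrac{s+\delta_f\pm it_f}{2}\big)$ we apply Stirling to $\tfrac{\Gamma'}{\Gamma}$; the pole structure of $\Gamma$ at non-positive integers together with $\delta_f\in\{0,1\}$ produces the $\tfrac{\phi(0)}{2}$ term (present because $\phi$ is even and $\delta_f$ forces the relevant half-integer shift), with the remaining $t_f$-dependence again bounded by $O(\tfrac{\log(|t_f|+2)}{\log N})$. (ii) The \emph{Dirichlet series}: on $\Re(s)=1+\sigma$ we expand $-\tfrac{L'}{L}(s,f)=\sum_{p,\nu}(\alpha_f(p)^\nu+\beta_f(p)^\nu)(\log p)\,p^{-\nu s}$; using $\alpha_f(p)^\nu+\beta_f(p)^\nu=\lambda_f(p^\nu)$ for $p\nmid N$ and $\beta_f(p)=0$ (so the identity still reads $\alpha_f(p)^\nu = \lambda_f(p^\nu)$) for $p\mid N$, and evaluating the integral via Mellin inversion, this becomes exactly $-2\sum_{p,\nu}\tfrac{\lambda_f(p^\nu)}{p^{\nu/2}}\widehat\phi\big(\tfrac{\nu\log p}{\log N}\big)\tfrac{\log p}{\log N}$ after the change of variable (the factor $2$ from $\phi$ being even / folding the two sides of the contour). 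Convergence on $\Re(s)=1+\sigma$ is fine by \eqref{equation Kim-Sarnak} since $\tfrac{7}{64}<\tfrac12$. (iii) The contour on $\Re(s)=-\sigma$ is handled by the functional equation \eqref{align:FE}, which maps it back to $\Re(s)=1+\sigma$; since $\epsilon_f=\pm1$ has modulus $1$ and the completed object is symmetric, this doubles the gamma-factor and Dirichlet-series contributions, which is the source of the factors of $2$ recorded above, and the contribution of $\tfrac{\Lambda_f'}{\Lambda_f}$ of the \emph{dual} side is just the complex conjugate/reflection, giving the same real quantities since $\phi$ is real and even.

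**The second statement.** For $f\in B^*(1)$ and a free parameter $X\ge 2$ the argument is identical except that the conductor is $1$ rather than $N$, so the $\big(\tfrac{N^{1/2}}{\pi}\big)^s$ factor is absent; hence there is no $\widehat\phi(0)$ term, only $\tfrac{\phi(0)}{2}$ from the $\Gamma$-factors and the prime-power sum with $\log N$ replaced by $\log X$, plus the same $O\big(\tfrac{\log(|t_f|+2)}{\log X}\big)$ error. Nothing else changes.

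**Main obstacle.** The computation itself is routine; the only genuine care is needed in two places. First, bounding the horizontal segments of the rectangle and justifying the contour shift — one needs a convexity/growth estimate for $L(s,f)$ in the critical strip together with the rapid decay of $\Phi$ coming from $\phi$ being Schwartz; this is standard but must be done uniformly enough that the error is $O\big(\tfrac{\log(|t_f|+2)}{\log N}\big)$ and in particular that the $t_f$-dependence is only logarithmic. Second, tracking that all the $t_f$-dependent pieces from Stirling applied to $\Gamma\big(\tfrac{s+\delta_f\pm it_f}{2}\big)$ — which a priori look like they could be as large as $\log(|t_f|+2)$ times something — collapse to exactly the claimed error after integrating against $\Phi$, using that $\widehat\phi$ has bounded support so only $O(\log N)$-many prime powers and an $O(1/\log N)$-length piece of the $\Gamma$-factor actually contribute. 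I expect the write-up to essentially cite \cite{IK} or \cite{ILS,AAILMZ} for the mechanics and only spell out the level-versus-conductor normalization point.
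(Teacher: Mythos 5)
Your overall strategy (contour shift / explicit formula starting from $\Lambda_f'/\Lambda_f$, then splitting into the exponential factor, the $\Gamma$-factors, and the Dirichlet series) is the same as the paper's, which simply cites the packaged version \cite[Proposition~2.1]{RS} instead of re-deriving it. However, there is a genuine error in step (ii) that undermines your derivation of the $\tfrac{\phi(0)}{2}$ term.

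You assert that $\alpha_f(p)^\nu+\beta_f(p)^\nu=\lambda_f(p^\nu)$ for $p\nmid N$. That identity holds only for $\nu=1$. For $p\nmid N$ one has $\alpha_f(p)\beta_f(p)=1$, and the Euler product \eqref{Eq L-function} gives $\lambda_f(p^\nu)=\sum_{k+\ell=\nu}\alpha_f(p)^k\beta_f(p)^\ell$, so the correct relation is
\begin{equation*}
\lambda_f(p^\nu)=\alpha_f(p)^\nu+\beta_f(p)^\nu+\lambda_f(p^{\nu-2})\qquad(\nu\geq2),
\end{equation*}
and the Dirichlet-series side of the explicit formula naturally produces $\alpha_f^\nu+\beta_f^\nu$, not $\lambda_f(p^\nu)$. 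The discrepancy is precisely what creates the $\tfrac{\phi(0)}{2}$: the correction terms with $\nu\geq3$ are negligible by Kim--Sarnak, and the $\nu=2$ term is
\begin{equation*}
\frac{2}{\log N}\sum_{p\nmid N}\frac{\log p}{p}\,\widehat\phi\Big(\frac{2\log p}{\log N}\Big)=\frac{\phi(0)}{2}+O\Big(\frac{1}{\log N}\Big)
\end{equation*}
by the Prime Number Theorem. In your write-up you instead attribute $\tfrac{\phi(0)}{2}$ to the pole structure of the $\Gamma$-factors, but those only contribute $O\big(\tfrac{\log(|t_f|+2)}{\log N}\big)$ after Stirling --- they are part of the error term, not the source of a constant. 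So as written your proof would conclude without the $\tfrac{\phi(0)}{2}$, which is incorrect; you need the Hecke recursion above to recover it, exactly as the paper does.
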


\begin{proof}
We will apply~\cite[Proposition 2.1]{RS} with $ g(u):= \frac 1 {\log N} \widehat \phi(\frac u{\log N}) $ so that $h(r)=\phi(r \frac{\log N}{2\pi } ) $; by the functional equation~\eqref{align:FE}, we can take $\mu_\pi(1)= \delta_f+it_f$, $\mu_\pi(2)= \delta_f-it_f$, and moreover we have $\overline t_f \in \{ t_f, -t_f\}$. It follows that the left-hand side of~\eqref{equation lemma explicit formula} is equal to
\begin{multline*}  
\widehat{\phi}(0)\Big(1 - \frac{2\log \pi}{\log N}\Big) -2\sum_{p,\nu} \frac{\alpha_f^{\nu}(p)+\beta_f^{\nu}(p) }{p^{\frac \nu 2}} \widehat \phi \Big( \frac{\nu \log p}{\log N} \Big) \frac{\log p}{\log N} \\ + \frac 1{\log N} \int_{\mathbb R} \Big(  \frac{\Gamma'}{\Gamma} \Big( \frac 14+ \frac{\delta_f+it_f}2+\frac{\pi i u}{\log N} \Big)+ \frac{\Gamma'}{\Gamma} \Big( \frac 14 + \frac{\delta_f-it_f}2+\frac{\pi i u}{\log N} \Big)\Big) \phi(u) \,\d u,
\end{multline*}
where $\alpha_f(p)$ and  $\beta_f(p)$ are the local coefficients of $L(s,f)$ defined in \eqref{Eq L-function}. Furthermore, Stirling's formula implies that the third term in this expression is $ \ll_\phi \frac {\log(|t_f|+2)}{\log N}$.

Note that $\lambda_f(p^\nu) =\alpha_f^\nu(p) = \alpha_f^\nu(p) + \beta_f^\nu(p)$ for $p\mid N$, and for $p \nmid N$ we obtain from~\eqref{Eq L-function} that
\begin{align*}
\begin{aligned}
  \lambda_f(p^\nu) &= \sum_{\substack{k, \ell \geq 0 \\ k+\ell = \nu}} \alpha_f^k(p) \beta_f^\ell(p) = \alpha_f^\nu(p) + \beta_f^\nu(p) + \alpha_f(p) \beta_f(p) \sum_{\substack{k, \ell \geq 0 \\k+\ell = \nu-2}} \alpha_f^k(p) \beta_f^\ell(p) \\
  &= \alpha_f^\nu(p) + \beta_f^\nu(p) + \lambda_f(p^{\nu-2}).
\end{aligned}
\end{align*}
Hence
\begin{align}\label{align:primesum}
  \begin{aligned}
-&2\sum_{p,\nu} \frac{\alpha_f^{\nu}(p)+\beta_f^{\nu}(p) }{p^{\frac \nu 2}} \widehat \phi \Big( \frac{\nu \log p}{\log N} \Big) \frac{\log p}{\log N} \\
&=-2\sum_{\substack{p,\nu }} \frac{\lambda_f(p^{\nu}) }{p^{\frac \nu 2}} \widehat \phi \Big( \frac{\nu \log p}{\log N} \Big) \frac{\log p}{\log N}+2\sum_{\substack{p,\nu \\ p\nmid N  \\ \nu\geq 2}} \frac{\lambda_f(p^{\nu-2}) }{p^{\frac \nu 2}} \widehat \phi \Big( \frac{\nu \log p}{\log N} \Big) \frac{\log p}{\log N}.
  \end{aligned}
  \end{align}
We can discard the terms with $\nu \geq 3$ in the second sum since by the Kim--Sarnak bound their contribution is 
$$\ll \frac 1{\log N}  \sum_{\substack{p \nmid N \\ \nu\geq 3}} \frac{\log p}{p^{\nu(\frac 12-\frac 7{64}) + \frac{7}{32} - \eps}} \ll \frac 1{\log N}.$$
As for the terms with $\nu=2$, by the Prime Number Theorem, their contribution in the second sum is equal to
\begin{align*}
 &\frac{2}{\log N}\sum_{p \nmid N} \frac{\log p}{p} \widehat \phi \Big( \frac{2 \log p}{\log N} \Big)= \frac{\phi(0)}2 +O\Big( \frac 1{\log N}\Big).
\end{align*}
Thus~\eqref{align:primesum} equals
\begin{align*}
  & -2\sum_{\substack{p,\nu  }} \frac{\lambda_f(p^{\nu}) }{p^{\frac \nu 2}} \widehat \phi \Big( \frac{\nu \log p}{\log N} \Big) \frac{\log p}{\log N} +\frac{\phi(0)}2+O\Big(\frac 1{\log N}\Big),
\end{align*}
and~\eqref{equation lemma explicit formula} follows. The proof of~\eqref{equation lemma explicit formula 2} is similar.
\end{proof}

To evaluate the one-level density~\eqref{weighted 1 level density} using this explicit formula, we will sum the right-hand side of~\eqref{equation lemma explicit formula} over $f\in B^*(N)$ against the weight $\omega_f(N)h(t_f)$. This will involve the quantity
$$\Delta^*_N(m,n;h):=\sum_{f\in B^*(N)}\omega_f(N) h(t_f)\lambda_f(m)\lambda_f(n),$$
which we need to estimate. In order to do so, we will apply the standard Kuznetsov trace formula, which gives a formula for the closely related quantity
$$\Delta_N(m,n;h):=\sum_{f\in B(N)}\omega_f(N) h(t_f)\lambda_f(m)\lambda_f(n),$$
where $B(N)$ is any orthogonal basis of the full space of Maass cusp forms of level $N$.
The following lemma allows us to express $\Delta^*_N(m,n;h)$ in terms of $\Delta_N(m,n;h)$ ($N$ being prime makes the expression particularly simple).

\begin{lemma}
\label{corollary kim sarnak}
Let $h\in \mathscr{H}(A,\delta)$ and let $N$ be a prime. For positive integers $m$ and $n$, with $(mn,N)=1$, we have
\begin{equation*}
		\Delta^*_N(m,n;h) =\Delta_N(m,n;h)-\frac 2{N+1}\Delta_1(m,n;h).
	\end{equation*}
\end{lemma}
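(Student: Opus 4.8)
The plan is to decompose the space of level-$N$ Maass cusp forms into its new and old parts, to compute the contribution of each piece to $\Delta_N(m,n;h)$, and to compare. Since $N$ is prime, the decomposition is as simple as possible: the full cusp space is the orthogonal direct sum of the newspace (an orthogonal basis of which is $B^*(N)$) and, for each Hecke--Maass newform $g\in B^*(1)$, the two-dimensional oldspace $V_g:=\operatorname{span}\{g(z),g(Nz)\}$. A first key point is that $\Delta_N(m,n;h)$ is independent of the chosen orthogonal basis $B(N)$: after $L^2$-normalizing, the summand $\omega_f(N)h(t_f)\lambda_f(m)\lambda_f(n)$ equals $\rho_f(m)\overline{\rho_f(n)}\,h(t_f)/\cosh(\pi t_f)$, and the sum of this over an orthonormal basis of cusp forms is basis independent, being a spectral trace in which the factor $h(t_f)/\cosh(\pi t_f)$ depends only on the Laplace eigenvalue of $f$. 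Therefore $\Delta_N(m,n;h)=\Delta_N^*(m,n;h)+\sum_{g\in B^*(1)}\mathcal C_g$, where $\mathcal C_g$ is the contribution of $V_g$, and since at level $1$ there are no oldforms, $\Delta_1(m,n;h)=\sum_{g\in B^*(1)}\omega_g(1)h(t_g)\lambda_g(m)\lambda_g(n)$.

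Next I would evaluate $\mathcal C_g$ under the hypothesis $(mn,N)=1$. Two facts reduce this to a $2\times2$ linear-algebra computation. First, since $(mn,N)=1$, the Hecke operators $T_m$ and $T_n$ act on all of $V_g$ as the scalars $\lambda_g(m)$ and $\lambda_g(n)$, so each vector of $V_g$ carries the same pair of Hecke eigenvalues. Second, the Fourier expansion of $g(Nz)$ at the cusp $\infty$ is supported on frequencies divisible by $N$; hence, writing $\ell_r$ for the linear functional extracting the $r$-th Fourier coefficient at $\infty$, one has $\ell_r(g)=\lambda_g(r)$ and $\ell_r(g(N\,\cdot))=0$ for every $r$ with $(r,N)=1$. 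It follows that $\mathcal C_g=\lambda_g(m)\lambda_g(n)\,\dfrac{h(t_g)}{\cosh(\pi t_g)}\,(G^{-1})_{11}$, where $G$ is the Gram matrix of the ordered basis $(g(z),g(Nz))$ of $V_g$ with respect to $\langle\cdot,\cdot\rangle_N$. The entries of $G$ I would compute as follows: both diagonal entries equal $(N+1)\lVert g\rVert_1^2$ — for $g(z)$ because $[\operatorname{SL}_2(\mathbb Z):\Gamma_0(N)]=N+1$ and $g$ is $\operatorname{SL}_2(\mathbb Z)$-invariant, and for $g(Nz)$ because the Atkin--Lehner involution $W_N$ is an $L^2$-isometry of level $N$ taking $g(z)$ to $g(Nz)$ — while the off-diagonal entry $\langle g(z),g(Nz)\rangle_N$ is obtained by a Rankin--Selberg unfolding against the Eisenstein series for $\Gamma_0(N)$, taking the residue at $s=1$ and using $\lambda_g(Nk)=\lambda_g(N)\lambda_g(k)-\lambda_g(k/N)\mathbf 1_{N\mid k}$ (equivalently, from the relations $U_Ng=\lambda_g(N)g-N^{-1/2}g(N\,\cdot)$ and $U_N^{*}=\sqrt N\,V_N$ on $V_g$); this gives $\langle g(z),g(Nz)\rangle_N=\sqrt N\,\lambda_g(N)\lVert g\rVert_1^2$.

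Finally I would invert this $2\times2$ matrix and sum over $g$. Because $N$ is prime, the resulting expression for $\sum_{g}\mathcal C_g$ is simple enough that one obtains the identity $\Delta_N^*(m,n;h)=\Delta_N(m,n;h)-\tfrac{2}{N+1}\Delta_1(m,n;h)$ of the lemma, the Kim--Sarnak bound $|\lambda_g(N)|\le 2N^{7/64}$ being what is needed to control the $\lambda_g(N)$-dependent contribution. I expect the main obstacle to be the exact evaluation of the off-diagonal inner product $\langle g(z),g(Nz)\rangle_N$ together with the careful tracking of normalizing constants through the Gram inversion — it is precisely the primality of $N$, which forces the oldspace to be only two-dimensional and trivializes the divisor combinatorics of Atkin--Lehner theory, that makes the answer collapse to the clean constant $\tfrac{2}{N+1}$.
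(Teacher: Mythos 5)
Your route is genuinely different from the paper's, and it exposes a difficulty that your sketch does not actually resolve. The paper never computes $\langle g, g(N\cdot)\rangle_N$: for each $g\in B^*(1)$ it simply picks $g_N\perp g$ in $V_g$, \emph{scaled} so that $\lVert g_N\rVert_N = \lVert g\rVert_N = (N+1)\lVert g\rVert_1^2$, notes that $T_n$ and the Laplacian act as scalars $\lambda_g(n)$ and $\tfrac14+t_g^2$ on all of $V_g$, and reads off $\omega_{g_N}(N)h(t_{g_N})\lambda_{g_N}(m)\lambda_{g_N}(n)=\tfrac{\omega_g(1)}{N+1}h(t_g)\lambda_g(m)\lambda_g(n)$ for both $g$ and $g_N$, giving the factor $\tfrac{2}{N+1}$ with no inner-product computation at all. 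Your Gram-matrix route is, in principle, the ``from first principles'' way to compute the contribution of $V_g$ to the basis-independent spectral sum, but it does not deliver the clean constant of the lemma.

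Concretely: with $\lVert g\rVert_N^2=\lVert g(N\cdot)\rVert_N^2=(N+1)\lVert g\rVert_1^2$ and your (correct) formula $\langle g, g(N\cdot)\rangle_N=\sqrt N\,\lambda_g(N)\lVert g\rVert_1^2$, one has
\[
G=\lVert g\rVert_1^2\begin{pmatrix}N+1 & \sqrt N\,\lambda_g(N)\\ \sqrt N\,\lambda_g(N) & N+1\end{pmatrix},
\qquad
(G^{-1})_{11}=\frac{N+1}{\bigl[(N+1)^2-N\lambda_g(N)^2\bigr]\lVert g\rVert_1^2}.
\]
This depends on $\lambda_g(N)$ and is not $\tfrac{2}{(N+1)\lVert g\rVert_1^2}$; at $\lambda_g(N)=0$ it equals $\tfrac{1}{(N+1)\lVert g\rVert_1^2}$, already off by a factor of $2$. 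So the final step ``invert the $2\times2$ matrix and sum over $g$ to obtain the identity'' does not go through. The Kim--Sarnak bound $|\lambda_g(N)|\le 2N^{7/64}$ only shows the $\lambda_g(N)$-dependent part is small relative to the rest; it cannot make it vanish, and an approximate equality with a power-saving error is not the stated exact identity. If you wanted to push this approach, the honest output is an Iwaniec--Luo--Sarnak-type formula: expand $(G^{-1})_{11}$ geometrically in $N\lambda_g(N)^2/(N+1)^2$ and trade powers of $\lambda_g(N)$ for Hecke eigenvalues $\lambda_g(N^{2j})$, arriving at a sum of $\Delta_1(mN^{2j},n;h)$ over $j\ge0$ rather than the single term $\Delta_1(m,n;h)$. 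The paper's normalization of $g_N$ was designed precisely to avoid this computation.
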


\begin{proof}  
The main idea of the proof, inspired by~\cite[Proposition 2.6]{ILS}, is to make a specific choice of the basis $B(N)$ satisfying $B^*(N)\subset B(N)$.
Starting from the decomposition of the space of Maass cusp forms of level $N$
$$\bigoplus_{LM=N}\bigoplus_{f\in B^*(M)}\text{span}\{f_{|\ell } : \ell\mid L \},$$
where $f_{|\ell }(z):=f(\ell z)$ (see, e.g., \cite[Theorem 4.6]{St}), we will use the Gram-Schmidt process. Indeed, assuming that $(n,N)=1$, the $n$-th Hecke operator as well as the hyperbolic Laplacian commute with the operator $f\mapsto f_{|N}$. 
Hence, for each $f \in B(1)=B^*(1)$, one can construct $f_N$ in $\text{span}\{f_{|1 }, f_{|N} \}$ orthogonal to $f_{|1}= f$ and satisfying
\begin{align*}
\lVert f_N \rVert_N = \lVert f \rVert_N, \qquad \lambda_{f_N}(n)=\lambda_{f}(n)\qquad\text{and}\qquad t_{f_N}=t_f.
\end{align*}
In particular, using the relations $\lVert f_N \rVert_N^2 = \lVert f \rVert_N^2 = (N+1)\lVert f \rVert_1^2 $, we note that
$$ \omega_{f_N}(N):=\frac{1}{\lVert f_N \rVert_N^2\cosh(\pi t_{f_N})}=\frac{\omega_{f}(1)}{N+1} \qquad \text{and}\qquad  \omega_{f}(N)=\frac{\omega_{f}(1)}{N+1}.$$
We thus define the orthogonal basis $B(N) := B^{*}(N) \cup \bigcup\limits_{f \in B(1)}\lbrace f,f_{N} \rbrace$.
From the above observations, we get
\begin{align*}
\Delta_N(m,n;h)&=\sum_{f \in B(N)}\omega_f(N) h(t_f)\lambda_f(m)\lambda_f(n)\\
&=\sum_{f\in B^*(N)}\omega_{f}(N)h(t_f)\lambda_f(m)\lambda_f(n)
+
2\sum_{f\in B(1)}\frac{\omega_{f}(1)}{N+1}h(t_f)\lambda_f(m)\lambda_f(n)\\
&= \Delta^*_N(m,n;h) + \frac{2}{N+1}\Delta_1(m,n;h),
\end{align*}
which is the desired result.
\end{proof}

Let us now state the Kuznetsov trace formula. It will involve the $J$-Bessel function
\[
J_{\nu}(x) :=  \sum_{m=0}^{\infty} \frac{(-1)^m}{m!\Gamma(m+\nu+1)} \Big( \frac x2\Big)^{2m+\nu},
\]
the integral transformation
\begin{equation*}
H^{+}(x):=\frac{2i}{\pi}\int_{-\infty}^{\infty}h(t)J_{2it}(x)\frac{t}{\cosh(\pi t)}\,\d t
\end{equation*}
of $h \in \mathscr{H}(A,\delta),$ and the arithmetic function 
$$\sigma_{\alpha}(n)=\sum_{d|n}d^{\alpha}.$$
\begin{lemma}\label{Kuznetsov1}
Let $h\in \mathscr{H}(A,\delta)$ and let $N$ be a prime. For positive integers $m$ and $n$, with $(mn,N)=1$, we have
\begin{align*}
\begin{aligned}
  \Delta_N(m,n;h)=&\frac{\delta_{m,n}}{\pi^2}\int_{-\infty}^{\infty}th(t)\tanh(\pi t)\,\d t
-\frac{1}{\pi N}\Big(1 + \frac1N\Big)\int_{-\infty}^{\infty}h(t)\frac{\sigma_{2it}(m)\sigma_{2it}(n)}{(mn)^{it}|\zeta_N(1+2it)|^2}\,\d t\nonumber\\
&+\sum_{ c\equiv 0 \bmod N}c^{-1}S(m,n;c)H^{+}\left(\frac{4\pi\sqrt{mn}}{c}\right),
\end{aligned}
\end{align*}
where
 $\zeta_N(s) = \prod_{p\nmid N} (1 - p^{-s})^{-1}.$
\end{lemma}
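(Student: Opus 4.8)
The statement to prove is the Kuznetsov trace formula (Lemma~\ref{Kuznetsov1}). This is a standard result, so the proof plan is essentially a matter of assembling known ingredients carefully and tracking the level-$N$ normalizations.

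\medskip

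The plan is to invoke the Kuznetsov formula in the form adapted to the Hecke-normalized Fourier coefficients $\lambda_f(n)=\rho_f(n)$ and to a general level $N$, as found in, e.g., \cite[Theorem 9.3]{KL} or \cite[Theorem 16.3]{IK}. Since $B(N)$ is an orthogonal (not orthonormal) basis, the spectral side is naturally phrased with the weights $\omega_f(N)=(\lVert f\rVert_N^2\cosh(\pi t_f))^{-1}$, which absorb exactly the normalizing constants that appear when one passes from an orthonormal basis to an arbitrary one; this is why $\Delta_N(m,n;h)$ appears with these weights. The three terms on the right arise as follows: the diagonal term $\delta_{m,n}$ comes from the contribution of $c\to\infty$ (equivalently the ``main term'' of the Bruhat decomposition on $\mathrm{GL}_2$), after inserting the spectral measure $\frac{1}{\pi^2}t\tanh(\pi t)\,\d t$; the off-diagonal Kloosterman term is the sum over $c\equiv 0\bmod N$ (only these $c$ occur because $\Gamma_0(N)$ forces $N\mid c$), weighted by $c^{-1}S(m,n;c)$ and the Bessel transform $H^+$; and the remaining integral is the Eisenstein contribution. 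First I would state precisely which reference form I am using and match notation.

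\medskip

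The one genuinely content-bearing step is the treatment of the continuous spectrum, i.e. deriving the Eisenstein term $-\frac{1}{\pi N}(1+\frac1N)\int h(t)\frac{\sigma_{2it}(m)\sigma_{2it}(n)}{(mn)^{it}|\zeta_N(1+2it)|^2}\,\d t$. Here one uses that for prime level $N$ there are exactly two cusps, $\infty$ and $0$, and the Eisenstein series $E_{\mathfrak a}(z,\tfrac12+it)$ attached to them have Fourier coefficients $\varphi_{\mathfrak a}(n,t)$ that are essentially divisor sums twisted by the scattering matrix. The relevant computation is: summing $|\rho_{\mathfrak a}(n,t)|^2$-type quantities over the two cusps produces the Dirichlet series $\sigma_{2it}(m)\sigma_{2it}(n)(mn)^{-it}$ divided by the square of $\zeta_N(1+2it)=\zeta(1+2it)(1-N^{-1-2it})$, and the overall constant $\frac{1}{\pi N}(1+\frac1N)$ comes from the volume normalization $[\mathrm{SL}_2(\mathbb Z):\Gamma_0(N)]=N(1+1/N)$ together with the standard constant in the Kuznetsov continuous term. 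The cleanest way to handle this is to cite the explicit computation of Eisenstein Fourier coefficients for $\Gamma_0(N)$, $N$ prime (e.g. from \cite{DFI} or \cite{KL}), rather than recompute the scattering matrix from scratch; then the displayed formula follows by direct substitution. I expect this normalization bookkeeping for the Eisenstein term to be the main (and essentially only) obstacle, since the diagonal and Kloosterman terms are entirely routine.

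\medskip

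Finally, I would verify the convergence and analyticity conditions needed for the formula to hold in this form: the decay $h(s)\ll(1+|s|)^{-2-\delta}$ for $|\Im(s)|\le A$ with $A>13$ guarantees that $H^+(x)$ is well-defined and that the geometric (Kloosterman) side converges absolutely — indeed, the factor $t/\cosh(\pi t)$ together with the holomorphy of $h$ in a wide strip allows one to shift the contour in the definition of $H^+$ and obtain enough decay of $H^+(x)$ both as $x\to 0$ and $x\to\infty$ to sum over $c\equiv 0\bmod N$. These are exactly the hypotheses built into the class $\mathscr H(A,\delta)$, so no further work is needed; one simply remarks that they match the hypotheses of the cited version of the Kuznetsov formula. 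This completes the proof.
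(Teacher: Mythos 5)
Your proposal matches the paper's proof in approach: both reduce the lemma to a direct citation of the Kuznetsov trace formula in Knightly--Li's book (the paper uses \cite[Theorem 7.14]{KL}; you suggest an alternative reference form), with the real work being normalization bookkeeping for the Petersson norm convention and, especially, the continuous-spectrum/Eisenstein term for prime level, plus a note that the weaker hypothesis on $h$ in the class $\mathscr H(A,\delta)$ is still sufficient (the paper defers this to \cite[Theorem 8.1]{KL} and subsequent remarks). Your identification of the two cusps at prime level and the resulting form of the Eisenstein coefficients with $\zeta_N(1+2it)$ is exactly what the paper's parameter choices ($i_N \in \{0,1\}$ with the corresponding character pairs) encode, so this is the same argument.
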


\begin{proof}
This is \cite[Theorem~7.14]{KL} applied with $\bold{\texttt{n}} =1$ and trivial nebentypus $\omega'$, see~\cite[(7.32)]{KL}. Note that in \cite{KL}, the Petersson norm has a different normalization from ours (see \cite[(4.3)]{KL}). In our case, the contribution of the Eisenstein term has two parts: in the first part, we have $i_N =0$, $\chi_1'$ equal to the trivial character modulo~$N$ and $\chi_2'$ equal to the trivial character modulo~$1$, while in the second part we have $i_N = 1$, $\chi_1'$ equal to the trivial character modulo~$1$ and $\chi_2'$ equal to the trivial character modulo~$N$ (with notation as in \cite{KL}). Note also that the hypothesis on $h$ is weaker in our statement compared to \cite[Theorem~7.14]{KL}; however, this is justified in \cite[Theorem~8.1]{KL} and the following remarks.
\end{proof}

In order to successfully apply the Kuznetsov formula, we need to understand the behavior of $H^+$. This is the aim of our next lemma.

\begin{lemma}
\label{lemma bound H+ small x}
Let $h\in \mathscr{H}(A,\delta)$ for some $A>13$. Then $H^+(x)$ is well-defined and twice differentiable in the range $ 0 < x < 4\pi$,
and moreover we have the estimate
\begin{equation}
 H^{+}(x) =  \frac{4}{\pi}\sum_{k=0}^{\lfloor A \rfloor-1} (-1)^k (k+\tfrac 12)h(-i(k+\tfrac 12)) J_{2k+1}(x) + O_A(x^ {2\lfloor A \rfloor }).
 \label{equation analytic continuation H+}
\end{equation}
In particular, $\lim_{x\rightarrow 0}H^+(x)=0$, $H^+(x) \ll x$, 
and more generally we have the Taylor expansion\begin{align} \label{eq:TaylorExp}
  H^{+}(x) = \sum_{m=1}^{2\lfloor A \rfloor-1} \alpha_{m,h} x^{m} +O_{A}(x^{2\lfloor A \rfloor}),
\end{align}
where the $\alpha_{m,h}$ are linear combinations of the values $\{h(-i(k+\tfrac 12))\}_{k\geq 0}$. Finally, the derivatives $(H^+)^{(j)}$ are bounded on the interval $(0,4\pi)$ for $j\in \{1,2\}$. 
\end{lemma}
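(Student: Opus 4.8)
The plan is to start from the defining integral
$$H^{+}(x)=\frac{2i}{\pi}\int_{-\infty}^{\infty}h(t)J_{2it}(x)\frac{t}{\cosh(\pi t)}\,\d t$$
and deform the contour of integration. First I would record the standard power-series estimate for the $J$-Bessel function: for $0<x<4\pi$ and $\Re(\nu)\geq-\tfrac12$ (say), one has $J_{\nu}(x)=\frac{(x/2)^{\nu}}{\Gamma(\nu+1)}+O(x^{\Re(\nu)+2})$, with more terms available from the defining series. With $\nu=2it$ this gives $J_{2it}(x)\ll x^{2\Re(it)}$ on horizontal lines, and on the real axis $J_{2it}(x)\ll 1$; the decay of $h$ in the strip $|\Im(s)|\leq A$ then guarantees absolute convergence of the integral and of its first two derivatives in $x$ (differentiating under the integral sign, using $\frac{\d}{\d x}J_{\nu}(x)=\tfrac12(J_{\nu-1}(x)-J_{\nu+1}(x))$ and again the power-series bound), which handles well-definedness and twice-differentiability on $(0,4\pi)$, as well as the boundedness of $(H^{+})^{(j)}$ there for $j\in\{1,2\}$.

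The heart of the argument is the contour shift. The integrand $h(t)J_{2it}(x)\frac{t}{\cosh(\pi t)}$ is meromorphic in $t$ on the strip $|\Im(t)|\leq A$ (since $h$ is holomorphic there and $J_{2it}(x)$ is entire in $t$), with poles only coming from the zeros of $\cosh(\pi t)$, i.e. at $t=-i(k+\tfrac12)$, $k\in\ZZ$. I would push the contour $\RR$ down to the horizontal line $\Im(t)=-A$ (or a line just above it avoiding the pole at height $A$ if $A$ is a half-integer — choosing $A$ generic, or shifting to $\Im(t)=-\lfloor A\rfloor$, is harmless), picking up residues at $t=-i(k+\tfrac12)$ for $k=0,1,\dots,\lfloor A\rfloor-1$. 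Since $\cosh(\pi t)$ has simple zeros with $\frac{\d}{\d t}\cosh(\pi t)\big|_{t=-i(k+1/2)}=\pi\sinh(-i\pi(k+\tfrac12))=\pi(-1)^{k}(-i)$, a short residue computation gives that $2\pi i$ times the residue at $t=-i(k+\tfrac12)$ contributes exactly $\frac{4}{\pi}(-1)^{k}(k+\tfrac12)h(-i(k+\tfrac12))J_{2k+1}(x)$ (using $J_{2i\cdot(-i(k+1/2))}=J_{2k+1}$), which is the main term in~\eqref{equation analytic continuation H+}. The remaining integral over the line $\Im(t)=-\lfloor A\rfloor$ is bounded, via $J_{2it}(x)\ll_{A} x^{2\lfloor A\rfloor}$ on that line together with $h(t)\ll(1+|t|)^{-2-\delta}$, by $O_{A}(x^{2\lfloor A\rfloor})$, giving the error term. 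One should double-check the justification for shifting the contour, i.e.\ that the contribution of the two short vertical segments at $\Re(t)=\pm R$ tends to $0$ as $R\to\infty$; this follows from the decay of $h$ and the fact that $J_{2it}(x)$ grows at most polynomially in $|t|$ in the relevant bounded strip for fixed $x$.

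Once~\eqref{equation analytic continuation H+} is established, the rest is bookkeeping. Expanding each $J_{2k+1}(x)$ into its Taylor series $\sum_{m\geq0}\frac{(-1)^{m}}{m!\,\Gamma(m+2k+2)}(x/2)^{2m+2k+1}$ and collecting powers of $x$ up to $x^{2\lfloor A\rfloor-1}$, folding the tail into the $O_{A}(x^{2\lfloor A\rfloor})$ error, yields~\eqref{eq:TaylorExp} with $\alpha_{m,h}$ an explicit finite linear combination of the $h(-i(k+\tfrac12))$; note the lowest power is $x^{1}$ (from the $k=0$, $m=0$ term), so $H^{+}(x)\ll x$ and $\lim_{x\to0}H^{+}(x)=0$ follow immediately. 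I expect the main obstacle to be purely technical rather than conceptual: carefully controlling the contour-shift error and the differentiated integrals uniformly near $x=4\pi$ (where the small-$x$ Bessel expansions start to lose efficiency but are still valid), and tracking the constants in the residue computation so that the signs and the factor $\frac{4}{\pi}$ come out exactly as stated. The hypothesis $A>13$ plays no essential role here beyond ensuring $\lfloor A\rfloor$ is large enough that the error term $x^{2\lfloor A\rfloor}$ is negligible for later applications.
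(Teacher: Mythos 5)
Your proposal follows essentially the same route as the paper: shift the contour of the defining integral to $\Im(t)=-\lfloor A\rfloor$, pick up the residues at the zeros $t=-i(k+\tfrac12)$ of $\cosh(\pi t)$ (computed with the same sign/normalization bookkeeping, yielding the $\tfrac{4}{\pi}(-1)^k(k+\tfrac12)h(-i(k+\tfrac12))J_{2k+1}(x)$ terms), bound the shifted integral via a power-series estimate for $J_\nu(x)$ together with the decay of $h$ and $1/\cosh(\pi t)$, and then expand the finitely many $J_{2k+1}$ in Taylor series to obtain \eqref{eq:TaylorExp}. The only imprecision is your intermediate claim that $J_{2it}(x)\ll 1$ for real $t$; in fact $J_{2it}(x)$ grows like $e^{\pi|t|}$ as $|t|\to\infty$, but this is harmless because $t/\cosh(\pi t)\ll |t|e^{-\pi|t|}$ more than compensates, which is exactly how the paper argues absolute convergence.
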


\begin{proof}
Let $x\in (0,4\pi)$. Now, the bound~\cite[The formula below (3.1)]{LQ} (see also \cite[3.3 (6)]{Wa}) states that, for $\Re(v)\geq0$,
\begin{equation}
 J_v (x) \ll \frac{x^{\Re(v)}}{|\Gamma(v+\frac 12)|}.
 \label{equation bound bessel}
\end{equation}
 Moreover, Stirling's formula implies that, for $\Re(z) \geq \frac 12$,
\begin{equation}
\frac 1{\Gamma(z)} \ll e^{\frac \pi 2 |\Im(z)| +\Re(z)}|z|^{\frac 12-\Re(z)}.
\label{equation Stirling}
\end{equation}
It follows that the Bessel function 
$$J_{2is}(x) =  \sum_{m=0}^{\infty} \frac{(-1)^m}{m!\Gamma(m+2is+1)} \Big( \frac x2\Big)^{2m+2is} $$
is an absolutely convergent sum (uniformly on compact subsets of $ \mathbb C$) of entire functions of $s$, and is therefore entire (in $s$). (It is crucial here that $x>0$.)

Now, the bounds~\eqref{equation bound bessel} and~\eqref{equation Stirling} imply that in the region $ \Im(s)\leq 0 $ 
we have the bound  

\begin{equation}
 J_{2is}(x) \ll  x^{-2\Im(s)} e^{\pi| \Re(s)| -2\Im(s)}|\tfrac 12+2is|^{2\Im(s)}.
\label{equation bound bessel imaginary}
\end{equation}
Hence \begin{equation*}
H^{+}(x)=\frac{2i}{\pi}\int_{\mathbb R}h(s)J_{2is}(x)\frac{s}{\cosh(\pi s)}\,\d s
\end{equation*}
is well-defined. 
By the decay of $h$ on horizontal lines, we may shift the contour of integration in the definition of $H^{+}(x)$ to the line $\Im(s) = -\lfloor A \rfloor$. Note that the integrand has simple poles at the points $s=-i(k+\frac 12)$ with $k\in \mathbb Z$ with residues
\[
h(-i(k+\tfrac{1}{2})) J_{2k+1}(x) (-i (k+\tfrac{1}{2})) (-1)^k \frac{i}{\pi},
\]
and thus we reach the formula
\begin{equation}
H^+(x) = \frac{4}{\pi}\sum_{k =0}^{ \lfloor A \rfloor -1}(-1)^k (k+\tfrac 12)h(-i(k+\tfrac 12)) J_{2k+1}(x) +\frac{2i}{\pi}\int_{\Im(s) = -\lfloor A \rfloor}h(s)J_{2is}(x)\frac{s}{\cosh(\pi s)}\,\d s. 
\label{equation shifted H+}
\end{equation}
To bound the integral on the right-hand side, we use the fact that
 if
$|s +i(k+\frac 12)|\geq \frac12$ for every $k \in \mathbb Z$, then
$$\frac 1{\cosh(\pi s)} \ll e^{-\pi|\Re(s)|},$$
and get, using also~\eqref{equation bound bessel imaginary}, that the integral is
\begin{align*}
\ll \int_{\Im(s) = -\lfloor A \rfloor} |h(s)| |s|x^{2\lfloor A \rfloor} e^{  2\lfloor A \rfloor}|\tfrac 12+2is|^{-2\lfloor A \rfloor}  |\d s| \ll_A  x^{2\lfloor A \rfloor}, 
\end{align*} 
which proves~\eqref{equation analytic continuation H+}. The second expansion~\eqref{eq:TaylorExp} follows from the familiar Taylor expansion of Bessel functions (see e.g.~\cite[2.11~(1)]{Wa}).

As for differentiability of $H^+(x)$, note that the sum on the right-hand side of~\eqref{equation shifted H+} is clearly twice differentiable. As for the integral, it is also twice differentiable by the identities~\cite[3.2~(2)]{Wa}
$$ 2J'_{2is}(x) = J_{i(2s +i)}(x)  - J_{i(2s-i)}(x) ; \qquad  4J''_{2is}(x) = J_{2i(s +i)}(x)  -2J_{2is}(x)  + J_{2i(s-i)}(x) $$
as well as the bound~\eqref{equation bound bessel imaginary} (recall that $A>13$). The same argument also shows that the first two derivatives of $H^+(x)$ are bounded on $(0,4\pi)$.
\end{proof}

Next, we compute the asymptotic size of the total mass $\Omega^*(h,N)$. Thanks to Lemmas~\ref{corollary kim sarnak},~\ref{Kuznetsov1}, and~\ref{lemma bound H+ small x}, we have that
\begin{align}\label{Total mass}
\Omega^*(h,N)&=\sum_{f\in B^*(N)}\omega_f(N) h(t_f) = \Delta_N^\ast (1, 1; h) =\frac1{\pi^2}\int_{-\infty}^{\infty}h(t)\tanh(\pi t)t\,\d t+O_{A,h}(N^{-1}).
\end{align} 
In other words, the total weight is asymptotically constant (depending on $h$).

It is now time to combine the work in this section to write $\mathcal{D}^\ast(\phi, h; N)$ in terms of averages of Kloosterman sums.
\begin{lemma}
\label{lemma apply Kuznetsov}
Let $h\in \mathscr{H}(A,\delta)$ with $A>13$, let $N$ be a prime and let  $\phi$ be an even Schwartz function for which  ${\rm supp}(\widehat \phi) \subset (-2,2)$. Then we have the estimate
\begin{multline}\label{multline:Dstarformula}
\mathcal D^*(\phi,h;N) = \widehat{\phi}(0) +\frac{\phi(0)}{2} \\-\frac2{\Omega^*(h,N)} \sum_{ c\equiv 0 \bmod N}c^{-1}\sum_{\substack{ p,\nu \\ p \nmid N}} \frac{ S(p^\nu,1;c)}{p^{\frac \nu2}} \widehat \phi \Big( \frac{\nu\log p}{\log N} \Big)H^{+}\Big(\frac{4\pi p^{\frac \nu 2}}{c}\Big)\frac{\log p}{\log N}  +O\Big( \frac 1{\log N}\Big) .
\end{multline}
\end{lemma}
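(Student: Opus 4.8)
The plan is to combine the four lemmas from this section in sequence, being careful about the interchange of summation and about the error terms that accumulate at each step. We start from the definition of $\mathcal{D}^*(\phi,h;N)$ in \eqref{weighted 1 level density} and apply the explicit formula \eqref{equation lemma explicit formula} to the inner sum over zeros. This replaces the sum over $\gamma_f$ by the constant $\widehat\phi(0)+\frac{\phi(0)}{2}$, a sum over prime powers weighted by $\lambda_f(p^\nu)$, and an error term $O(\log(|t_f|+2)/\log N)$. Summing against $\omega_f(N)h(t_f)$ and dividing by $\Omega^*(h,N)$, the constant terms pass straight through since $\Omega^*(h,N)^{-1}\sum_f \omega_f(N)h(t_f)=1$; the error term contributes $O(1/\log N)$ after dividing by $\Omega^*(h,N)$, because $\sum_f \omega_f(N)h(t_f)\log(|t_f|+2)$ is absolutely convergent (using the rapid decay of $h$ on the real line, which controls the sum over the spectrum via a crude Weyl-law bound — or more efficiently via the Kuznetsov formula of Lemma~\ref{Kuznetsov1} applied with $m=n=1$, as was already done to obtain \eqref{Total mass}). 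This leaves exactly the prime-power term, namely $-\tfrac{2}{\Omega^*(h,N)}\sum_{p,\nu,\,p\nmid N}p^{-\nu/2}\widehat\phi(\tfrac{\nu\log p}{\log N})\tfrac{\log p}{\log N}\,\Delta_N^*(p^\nu,1;h)$, where the restriction $p\nmid N$ is harmless: the terms with $p=N$ contribute $O(1/\log N)$ since $\widehat\phi$ has bounded support forces $\nu\ll \log N/\log N=O(1)$ while $p^{-\nu/2}=N^{-\nu/2}$ is tiny.

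\textbf{Next} we insert Lemma~\ref{corollary kim sarnak} to write $\Delta_N^*(p^\nu,1;h)=\Delta_N(p^\nu,1;h)-\tfrac{2}{N+1}\Delta_1(p^\nu,1;h)$, and then expand each of $\Delta_N$ and $\Delta_1$ via the Kuznetsov formula of Lemma~\ref{Kuznetsov1}. The diagonal term $\delta_{p^\nu,1}$ vanishes since $p^\nu>1$ always. The Eisenstein (continuous spectrum) term requires a short estimate: for $\Delta_N$ it is $-\tfrac{1}{\pi N}(1+\tfrac1N)\int h(t)\tfrac{\sigma_{2it}(p^\nu)}{(p^\nu)^{it}|\zeta_N(1+2it)|^2}\,\d t$, which is $O(p^{\nu\varepsilon}/N)$ because $\sigma_{2it}(p^\nu)\ll p^{\nu\varepsilon}$ on the line $\Re=1$ and $|\zeta_N(1+2it)|^{-1}\ll \log(|t|+2)$ away from $t=0$ (with an integrable singularity handled by the $\sigma$ factor vanishing appropriately — or simply by noting $|\zeta_N(1+2it)|^{-2}\ll (\log N)^2$ uniformly, which suffices); summed against $p^{-\nu/2}\log p/\log N$ over the bounded support this is $O((\log N)^{O(1)}/N)=o(1/\log N)$. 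The Eisenstein term of $\Delta_1$ is similar but without the $1/N$, so after multiplication by $\tfrac{2}{N+1}$ it is again $O((\log N)^{O(1)}/N)$. What survives from both formulas is the Kloosterman term: from $\Delta_N$ we get $\sum_{c\equiv 0\bmod N}c^{-1}S(p^\nu,1;c)H^+(4\pi p^{\nu/2}/c)$, while from $\Delta_1$ we get $\sum_{c\geq 1}c^{-1}S(p^\nu,1;c)H^+(\cdots)$ multiplied by $\tfrac{2}{N+1}$.

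\textbf{The key remaining point} is that the $\Delta_1$ Kloosterman contribution is negligible. Here we use Lemma~\ref{lemma bound H+ small x}: since $\widehat\phi$ is supported in $(-2,2)$ we have $\nu\log p<2\log N$, i.e.\ $p^{\nu/2}<N$, and for $c\geq 1$ with $c\leq p^{\nu/2}$ only finitely many terms occur, while for $c$ large $H^+(4\pi p^{\nu/2}/c)\ll p^{\nu/2}/c$ by the bound $H^+(x)\ll x$ valid on $(0,4\pi)$ (and $H^+$ is bounded elsewhere, though here the argument $4\pi p^{\nu/2}/c$ is automatically $<4\pi$ once $c>p^{\nu/2}$). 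Using the Weil bound $|S(p^\nu,1;c)|\ll c^{1/2+\varepsilon}$, the sum $\sum_{c}c^{-1}\cdot c^{1/2+\varepsilon}\cdot \min(1,p^{\nu/2}/c)$ converges to $O(p^{\nu/4+\varepsilon})$, hence (after summing $p^{-\nu/2}\log p/\log N\cdot p^{\nu/4+\varepsilon}$ over the bounded range of $\nu\log p$) the whole $\Delta_1$ contribution is $O(N^{1/2+\varepsilon})\cdot\tfrac{2}{N+1}=O(N^{-1/2+\varepsilon})=o(1/\log N)$. Finally, recalling from \eqref{Total mass} that $\Omega^*(h,N)$ is asymptotically a positive constant, we can replace $-2/\Omega^*(h,N)$ by its value and collect all error terms into $O(1/\log N)$, yielding \eqref{multline:Dstarformula}. \textbf{The main obstacle} I anticipate is purely bookkeeping: tracking that every discarded piece is genuinely $O(1/\log N)$ rather than merely $o(1)$, in particular making sure the Eisenstein integrals are handled with the correct power of $\log$ near $t=0$ and that the $c$-sum for the $\Delta_1$ term is split at $c\asymp p^{\nu/2}$ so that the bound $H^+(x)\ll x$ is applied only where $x<4\pi$; none of these steps is deep, but each must be done carefully to land the stated error term.
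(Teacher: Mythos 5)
Your overall structure matches the paper's for the first two steps (explicit formula, then passing from $B^*(N)$ to $B(N)$ via Lemma~\ref{corollary kim sarnak}), but the way you dispose of the $\Delta_1$ contribution is both different from the paper's and, more importantly, quantitatively insufficient. You apply the Kuznetsov formula to $\Delta_1(p^\nu,1;h)$ and bound the resulting $c$-sum of Kloosterman sums by Weil plus $H^+(x)\ll\min(1,x)$. That gives, for each prime power, a bound of order $p^{\nu/4+\eps}$ for the Kloosterman contribution to $\Delta_1(p^\nu,1;h)$ (your own estimate). But then
\[
\sum_{p^\nu<N^\sigma} \frac{p^{\nu/4+\eps}}{p^{\nu/2}}\,\frac{\log p}{\log N}
=\frac{1}{\log N}\sum_{p^\nu<N^\sigma}(p^\nu)^{-\frac14+\eps}\log p
\asymp \frac{N^{\frac{3\sigma}{4}+\eps}}{\log N},
\]
not $O(N^{\frac12+\eps})$ as you claim; you have undercounted by a factor of roughly $N$. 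After multiplying by $\frac{2}{(N+1)\Omega^*(h,N)}$ this yields $O(N^{\frac{3\sigma}{4}-1+\eps})$, which is $o(1/\log N)$ only for $\sigma<\frac43$, while Lemma~\ref{lemma apply Kuznetsov} is stated for $\sigma$ up to $2$. The Weil bound used pointwise in $p$ simply does not capture enough cancellation to close the argument in the claimed range, so this route fails for $\sigma\in(\frac43,2)$.

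The paper avoids this by not expanding $\Delta_1$ through Kuznetsov at all. Instead it applies the level-$1$ explicit formula \eqref{equation lemma explicit formula 2} \emph{in reverse}, converting the prime sum attached to $\Delta_1$ back into $\frac{2}{N+1}\sum_{f\in B(1)}\omega_f(1)h(t_f)\bigl(\sum_{\gamma_f}\phi(\gamma_f\tfrac{\log N}{2\pi})+O(1+\tfrac{\log(|t_f|+2)}{\log N})\bigr)$, and then bounds the sum over zeros directly via the estimate $\phi(z)\ll e^{2\pi\sigma|\Im z|}/(1+|z|^2)$ together with the standard zero-density bound $\sum_{\gamma_f}(1+|\gamma_f|^2)^{-1}\ll\log(N(|t_f|+2))$. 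This gives $O(N^{\frac\sigma2-1+\eps})$, with negative exponent for all $\sigma<2$, which is what is needed. Your approach would also require a bound on $H^+(x)$ for $x\geq 4\pi$ (when $c\leq p^{\nu/2}$), which Lemma~\ref{lemma bound H+ small x} does not supply; but the more fundamental problem is the loss of a factor near $N^{\sigma/4}$ from applying Weil pointwise, which the reverse-explicit-formula trick sidesteps.
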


\begin{proof}
Recalling~\eqref{weighted 1 level density} as well as the fact that for $p=N$ we have the identities $\lambda_f(p^{\nu})=\lambda_f(p)^{\nu}$ and $\lambda_f(p)=\pm p^{-\frac12}$ (see, e.g., \cite[Lemma 4.11]{St}), Lemma \ref{Lemma Explicit Formula} implies the estimate
$$\mathcal D^*(\phi,h;N)= \widehat{\phi}(0) +\frac{\phi(0)}2   -\frac2{\Omega^*(h,N)}\sum_{f \in B^*(N)} \omega_f(N) h(t_f)\sum_{\substack{p\nmid N\\ \nu\geq1}} \frac{\lambda_f(p^{\nu}) }{p^{\frac \nu2}} \widehat \phi \Big( \frac{\nu\log p}{\log N} \Big)\frac{\log p}{\log N}   
 +O\Big(\frac 1{\log N}\Big).$$
 The next step is to show that the corresponding estimate where the sum on the right-hand side runs over all $f \in B(N)$, that is
\begin{multline}
\mathcal D^*(\phi,h;N)= \widehat{\phi}(0) +\frac{\phi(0)}2 
 -\frac2{\Omega^*(h,N)}\sum_{f \in B(N)} \omega_f(N) h(t_f)\sum_{\substack{p\nmid N\\ \nu\geq1}} \frac{\lambda_f(p^{\nu}) }{p^{\frac \nu2}} \widehat \phi \Big( \frac{\nu\log p}{\log N} \Big)\frac{\log p}{\log N}   
 +O\Big(\frac 1{\log N}\Big),
 \label{equation this is actually}
\end{multline}
holds.
Indeed, by the definitions of $\Delta_N$ and $\Delta_N^\ast$ and Lemma~\ref{corollary kim sarnak}, the difference between the two expressions is equal to 
\begin{align*}
  &\frac2{\Omega^*(h,N)}\sum_{\substack{p\nmid N\\ \nu\geq1}} \frac{\Delta^*_N(p^\nu,1;h)-\Delta_N(p^\nu,1;h) }{p^{\frac \nu2}} \widehat \phi \Big( \frac{\nu\log p}{\log N} \Big)\frac{\log p}{\log N} \\
&= -\frac4{(N+1) \Omega^*(h,N)}\sum_{\substack{p\nmid N\\ \nu\geq1}} \frac{\Delta_1(p^\nu,1;h) }{p^{\frac \nu2} } \widehat \phi \Big( \frac{\nu\log p}{\log N} \Big)\frac{\log p}{\log N} \\
  &= -\frac4{(N+1)\Omega^*(h,N)}\sum_{f \in B(1)}\omega_f(1) h(t_f)\sum_{\substack{p\nmid N\\ \nu\geq1}} \frac{\lambda_f(p^{\nu}) }{p^{\frac \nu2}} \widehat \phi \Big( \frac{\nu\log p}{\log N} \Big)\frac{\log p}{\log N},
  \end{align*}
which by the explicit formula~\eqref{equation lemma explicit formula 2} with $X=N$ and the Kim-Sarnak bound~\eqref{equation Kim-Sarnak} applied to the term $ p = N$ is equal to
\begin{equation}
  \label{eq:difference}
  \frac2{(N+1)\Omega^*(h,N)}\sum_{f \in B(1)}\omega_f(1) h(t_f)\bigg(\sum_{\gamma_f}\phi\Big(\gamma_f\frac{\log N}{2\pi}\Big)+O\Big(1+\frac{\log(|t_f|+2)}{\log N} \Big) \bigg).
\end{equation}
Writing $\sigma:=\sup ({\rm supp}(\widehat \phi))$, we have the bound
$$ \phi(z) = \int_{\mathbb R} e^{2\pi i z x} \widehat \phi(x) \d x  \ll \int_{\mathbb R} e^{2\pi x| \Im(z) | } |\widehat \phi(x)| \d x \ll  e^{2\pi \sigma |\Im(z)| }. $$
Moreover, integration by parts gives 
$$ \phi(z) = \frac 1{(2\pi i z)^2} \int_{\mathbb R} e^{2\pi i z x} \widehat \phi''(x) \d x  \ll \frac 1{|z|^2}\int_{\mathbb R} e^{2\pi |x \Im(z) | } |\widehat \phi''(x)| \d x \ll \frac 1{|z|^2} e^{2\pi \sigma |\Im(z)| }, $$
and hence we conclude from the above bounds that
$$  \phi(z)  \ll \frac {e^{2\pi \sigma |\Im(z)| }}{1+|z|^2} .$$
As a result, the inner sum in~\eqref{eq:difference} can be bounded as
\[
  \sum_{\gamma_f}\phi\left(\gamma_f \frac{\log N}{2\pi}\right) \ll \sum_{\gamma_f} \frac{N^{\sigma |\Im(\gamma_f)|}}{ 1+ |\gamma_f|^2}  \ll N^{\frac{\sigma}{2}}\sum_{\gamma_f} \frac 1{ 1+ |\gamma_f|^2}, 
  \]
which  by~\cite[Theorem 5.38]{IK} is $\ll N^{\frac \sigma 2}\log(N(|t_f|+2)) $. Hence,~\eqref{eq:difference} yields an acceptable error term.
  
Coming back to the expression~\eqref{equation this is actually} for the one-level density, we may apply Lemma~\ref{Kuznetsov1} and see that the third term in this expression equals
\begin{multline}
\frac{2\big(1 + \frac1N\big)}{\Omega^*(h,N)\pi N}  \sum_{\substack{p\nmid N \\ \nu \geq 1}} \frac{1}{p^{\frac \nu2}} \widehat \phi \Big( \frac{\nu\log p}{\log N} \Big)\frac{\log p}{\log N}\int_{-\infty}^{\infty}h(t)\frac{\sigma_{2it}(p^\nu)}{p^{it\nu}|\zeta_N(1+2it)|^2}\,\d t\\
-\frac2{\Omega^*(h,N)} \sum_{ c\equiv 0 \bmod N}c^{-1}\sum_{\substack{p\nmid N \\ \nu \geq 1}} \frac{S(p^\nu,1;c) }{p^{\frac \nu2}} \widehat \phi \Big( \frac{\nu\log p}{\log N} \Big)H^{+}\Big(\frac{4\pi p^{\frac \nu 2}}{c}\Big)\frac{\log p}{\log N}.
\label{equation bound on zeta}
\end{multline}
Recalling~\eqref{Total mass} we see that the first term in~\eqref{equation bound on zeta} is $\ll_\eps N^{\frac{\sigma}2+\eps-1}$, and the proof is completed.
\end{proof}

We are now ready to express the one-level density as the expected Katz--Sarnak main term plus a sum over primes, which we will bound in the next section.

\begin{proposition} \label{Prop truncation} 
Let $h\in \mathscr{H}(A,\delta)$ with $A> 13 $, let $N$ be a prime and let  $\phi$ be an even Schwartz function for which  ${\rm supp}(\widehat \phi) \subset (-2, 2)$. Then we have the estimate
\begin{multline*}
\mathcal D^*(\phi,h;N) = \widehat{\phi}(0) +\frac{\phi(0)}{2} \\ +O \Bigg(    \sum_{ \substack{ c\equiv 0 \bmod N  \\ c< N^2}}\frac 1{c\varphi(c)} \sum_{ \substack{ d\mid c \\ d\neq 1}} d  \underset{\chi \bmod d}{{\sum}^*}  \Bigg| \sum_{n} \frac{\chi(n)}{n^{\frac 12}} \widehat \phi \Big( \frac{\log n}{\log N} \Big)H^{+}\Big(\frac{4\pi n^{\frac 1 2}}{c}\Big)\frac{\Lambda(n)}{\log N} \Bigg| +\frac 1{\log N}  \Bigg).
\end{multline*}
\end{proposition}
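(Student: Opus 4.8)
The plan is to start from Lemma~\ref{lemma apply Kuznetsov} and combine it with \eqref{Total mass}, which lets us replace $\Omega^*(h,N)$ by a positive constant depending only on $h$; it then remains to bound the Kloosterman term appearing there by the claimed error term. First I would re-index the sum over $(p,\nu)$ with $p\nmid N$ as a sum over prime powers $n=p^\nu$, so that $\log p$ becomes $\Lambda(n)$ and $\widehat\phi(\nu\log p/\log N)$ becomes $\widehat\phi(\log n/\log N)$. The hypothesis ${\rm supp}(\widehat\phi)\subset(-2,2)$ restricts the sum to $n<N^2$, and since $c\equiv0\bmod N$ forces $c\ge N>n^{1/2}$, the argument $4\pi n^{1/2}/c$ of $H^+$ always lies in $(0,4\pi)$; hence Lemma~\ref{lemma bound H+ small x} applies, and in particular $H^+(4\pi n^{1/2}/c)\ll n^{1/2}/c$.

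Next I would truncate the $c$-sum at $c<N^2$. Using $H^+(x)\ll x$ and the Weil bound $|S(m,n;c)|\le\tau(c)(m,n,c)^{1/2}c^{1/2}$, one also discards the prime powers $n=p^\nu$ with $p\mid c$ (equivalently $p\mid c/N$), whose total contribution is $O(N^{-1/2})$ since for each such $c$ there are only $O(\omega(c)\log N)$ of them. The core step is to open each remaining Kloosterman sum $S(n,1;c)$ (now with $(n,c)=1$) into Dirichlet characters modulo $c$: by orthogonality, writing $e(x):=e^{2\pi ix}$,
\[
S(n,1;c)=\frac1{\varphi(c)}\sum_{\chi\bmod c}\tau_\chi(n)\,\tau_\chi,\qquad \tau_\chi(n):=\sum_{x\bmod c}\chi(x)e\!\left(\tfrac{nx}{c}\right),\quad \tau_\chi:=\tau_\chi(1),
\]
and $\tau_\chi(n)=\overline\chi(n)\tau_\chi$ because $(n,c)=1$. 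Sorting the characters modulo $c$ by their conductor $d\mid c$ and writing each $\chi$ as induced by a primitive character $\chi^\ast\bmod d$, one has $\tau_\chi=\mu(c/d)\chi^\ast(c/d)\tau_{\chi^\ast}$ with $|\tau_{\chi^\ast}|^2=d$, so $|\tau_\chi|^2\le d$. Substituting this expansion into the $c$-sum, interchanging the sum over $n$ with the sum over characters, and taking absolute values then produces the factor $d$ and the inner Dirichlet polynomial in $\chi$ appearing in the statement (after renaming $\overline{\chi^\ast}$ as $\chi$ and using $\overline\chi(n)=\overline{\chi^\ast}(n)$ when $(n,c)=1$). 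The conductor-$1$ term, coming from the principal character modulo $c$, is separated off: there $\tau_{\chi_0}(n)\tau_{\chi_0}=\mu(c)^2$ for $(n,c)=1$, and by $H^+(x)\ll x$ and the prime number theorem its contribution is $O(1/(N\log N))$. Finally, the factor $c^{-1}$ from the Kuznetsov formula, the factor $\varphi(c)^{-1}$ from the character expansion, the factor $d$ from the Gauss sums, and the restriction $c<N^2$ combine to give exactly the asserted bound.

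The step I expect to be the main obstacle is the truncation of the $c$-sum. Since $\sum_{p^\nu<N^2}\log p\asymp N^2$ is large, a purely trivial estimate based on the Weil bound only controls the tail $c\ge N^2$ by $O(1)$; one therefore has to inject genuine cancellation — over $c$, equivalently over the Dirichlet characters produced once the Kloosterman sums are opened up — via the large sieve, and arranging matters so that this tail really falls inside the error term $O(1/\log N)$ requires care. The remaining ingredients (orthogonality, the evaluation of the Gauss sums $\tau_\chi$, and the handling of the principal character and of the $p\mid c$ terms) are routine.
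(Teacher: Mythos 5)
Your route is the same as the paper's, which simply refers to the analogous argument of \cite[proof of Lemma 3.1]{DFS2}: open the Kloosterman sums via Dirichlet characters, group by conductor, and handle the principal character and the prime powers with $p\mid c$ separately. Your formulas for the Gauss sums and the conductor decomposition are correct, and there is no genuine gap. However, the concern you single out as the main obstacle --- that truncating the $c$-sum at $c<N^2$ requires injecting large-sieve cancellation --- is misplaced. Your count $\sum_{p^\nu<N^2}\log p\asymp N^2$ treats the prime sum as running up to $N^2$; but since $\widehat\phi$ is compactly supported inside the open interval $(-2,2)$, the number $\sigma:=\sup({\rm supp}\,\widehat\phi)$ is strictly less than $2$ and the prime sum is restricted to $n\le N^{\sigma}$. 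Combining $H^{+}(x)\ll x$ from Lemma~\ref{lemma bound H+ small x} with the Weil bound $|S(n,1;c)|\le\tau(c)c^{1/2}$, the tail over $c\ge N^{2}$ is
\[
\ll \frac{1}{\log N}\sum_{\substack{c\ge N^{2}\\ N\mid c}}\frac{\tau(c)}{c^{3/2}}\sum_{n\le N^{\sigma}}\Lambda(n)
\ll \frac{N^{\sigma}}{\log N}\cdot\frac{\log N}{N^{2}}
= N^{\sigma-2}\ll_{\phi}\frac{1}{\log N},
\]
which falls comfortably inside the error term with no cancellation required. All of the genuine cancellation in the problem (the large sieve and fourth-moment bounds for Dirichlet $L$-functions) enters only later, in the proof of Proposition~\ref{prop 1}; Proposition~\ref{Prop truncation} itself is routine throughout.
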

\begin{proof}
Starting from Lemma~\ref{lemma apply Kuznetsov}, we may apply the same argument as that in~\cite[Proof of Lemma 3.1]{DFS2}. This way we see that the third term on the right-hand side of~\eqref{multline:Dstarformula} equals
\begin{align*}
   &  -\frac2{\Omega^*(h,N)} \sum_{ \substack{ c\equiv 0 \bmod N  \\ c< N^2}}\frac 1{c\varphi(c)} \sum_{\substack{\chi \bmod c \\ \chi \neq \chi_0}} \tau( \overline \chi )^2 \sum_{\substack{p \nmid c \\ \nu\geq 1}} \frac{\chi(p^\nu) }{p^{\frac \nu2}} \widehat \phi \Big( \frac{\nu\log p}{\log N} \Big)H^{+}\Big(\frac{4\pi p^{\frac \nu 2}}{c}\Big)\frac{\log p}{\log N} +O\Big( \frac 1{\log N}\Big),
\end{align*}
where $\tau(\chi)$ is the Gauss sum associated to $\chi$.
The claimed estimate follows from decomposing the sum over characters into primitive characters modulo their respective conductors, and bounding trivially the contribution of the primes dividing~$c$.
\end{proof}

To bound the error term in Proposition \ref{Prop truncation}, we will use Mellin inversion and hence we will need a bound on the Mellin transform
\begin{equation}
\label{equation definition Psi}
\Psi_{\phi,N,c,h}(s) := \frac{1}{\log N} \int_0^{\infty} x^{s-1}\widehat \phi \Big( \frac{\log x}{\log N} \Big)  H^+(4\pi \sqrt{x}/c) \d x. 
\end{equation}

\begin{lemma}\label{lemma:bound Psi}
Let $\sigma \in (0, 2)$ be fixed. Let $N$ be a prime, let $c \geq N$, let $\phi$ be an even Schwartz function for which ${\rm supp}(\widehat \phi) \subset (-\sigma, \sigma)$, and let $h\in \mathscr{H}(A,\delta)$. Then, the function $\Psi_{\phi,N,c,h}(s)$ defined in~\eqref{equation definition Psi} is entire and satisfies the bound
	\begin{align*}
		\Psi_{\phi,N,c,h}(s) \ll \frac{N^{\sigma \lvert\Re(s) + \frac{1}{2} \rvert}}{(\lvert s \rvert +1)^2 c}.
	\end{align*}
\end{lemma}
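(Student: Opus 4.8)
The plan is to establish the two claims---holomorphy and the stated bound---more or less simultaneously by a contour-shifting argument in the Mellin variable, exploiting the smoothness and decay properties of the two factors $\widehat\phi(\log x/\log N)$ and $H^+(4\pi\sqrt x/c)$ recorded in the earlier lemmas. First I would note that $H^+(4\pi\sqrt x/c)$ is supported (for our purposes) on $x<c^2/(4\pi)^2$ in the sense that Lemma~\ref{lemma bound H+ small x} only gives good control for argument in $(0,4\pi)$; but since $\widehat\phi$ has compact support in $(-\sigma,\sigma)$, the factor $\widehat\phi(\log x/\log N)$ already restricts the integral in~\eqref{equation definition Psi} to $x\in(N^{-\sigma},N^{\sigma})$, and since $c\ge N$ and $\sigma<2$ we have $4\pi\sqrt x/c \le 4\pi N^{\sigma/2}/N$, which tends to $0$; so the argument of $H^+$ stays in $(0,4\pi)$ for $N$ large, and Lemma~\ref{lemma bound H+ small x} applies throughout. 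In particular the integrand is supported on a bounded interval (in $\log x$), so the integral defining $\Psi_{\phi,N,c,h}(s)$ converges for all $s\in\CC$ and defines an entire function; this disposes of the holomorphy claim.

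For the bound, I would first get the estimate on the critical line $\Re(s)=-\tfrac12$ and then interpolate (or rather, re-run the same argument on a general vertical line). Substituting $x=e^u$ turns~\eqref{equation definition Psi} into
\begin{equation*}
\Psi_{\phi,N,c,h}(s) = \frac{1}{\log N}\int_{\RR} e^{su}\,\widehat\phi\Big(\frac{u}{\log N}\Big)\,H^+\Big(\frac{4\pi e^{u/2}}{c}\Big)\,\d u,
\end{equation*}
an integral over $u\in(-\sigma\log N,\sigma\log N)$. Bounding $e^{su}$ trivially by $e^{\Re(s)u}\le N^{\sigma|\Re(s)|}$ on this range, using $|\widehat\phi|\ll 1$, and using $H^+(y)\ll y$ from Lemma~\ref{lemma bound H+ small x} with $y=4\pi e^{u/2}/c$, gives a contribution $\ll \frac{1}{\log N}\cdot N^{\sigma|\Re(s)|}\cdot \frac{1}{c}\cdot \int_{-\sigma\log N}^{\sigma\log N} e^{u/2}\,\d u \ll \frac{N^{\sigma(|\Re(s)|+\frac12)}}{c}$, which already matches the numerator but is missing the $(|s|+1)^{-2}$ saving. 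To recover that saving I would integrate by parts twice in $u$: each integration by parts produces a factor $1/s$ and differentiates the smooth function $u\mapsto \widehat\phi(u/\log N)H^+(4\pi e^{u/2}/c)$. The derivatives of $\widehat\phi(u/\log N)$ are $\ll 1/\log N$ (harmless), and the key point is that $(H^+)'$ and $(H^+)''$ are bounded on $(0,4\pi)$ by the final assertion of Lemma~\ref{lemma bound H+ small x}; combined with the chain-rule factors $(e^{u/2}/c)$ and $(e^{u/2}/c)^2$ (the latter is even smaller since $e^{u/2}/c\le N^{\sigma/2}/N\le 1$), every term arising in the double integration by parts is $\ll (|s|+1)^{-2}\cdot (\text{previous bound})$. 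Collecting these gives exactly $\Psi_{\phi,N,c,h}(s)\ll N^{\sigma|\Re(s)+\frac12|}/((|s|+1)^2 c)$; note the bound is uniform in $\Re(s)$ once one observes $|\Re(s)|+\tfrac12 = |\Re(s)+\tfrac12|$ is false in general, so I would be slightly more careful and instead bound $e^{su}H^+(4\pi e^{u/2}/c) \ll N^{\sigma|\Re(s)|}\cdot N^{\sigma/2}/c = N^{\sigma(|\Re(s)|+1/2)}/c$ and then check that on the support $|\Re(s)+\tfrac12|\ge |\Re(s)|-\tfrac12$ is not what we want either---rather, the honest statement is that the $e^{u/2}$ from $H^+$ can be absorbed by shifting: write $e^{su}e^{u/2}=e^{(s+1/2)u}$, so the true exponential growth rate is $|\Re(s)+\tfrac12|$, giving $N^{\sigma|\Re(s)+1/2|}$ precisely.

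The main obstacle, such as it is, is purely bookkeeping: one must make sure the integration-by-parts boundary terms vanish (they do, since the integrand is compactly supported in $u$ once $N$ is large, because $\widehat\phi$ has compact support and $H^+(y)\to 0$ as $y\to0$), and one must track that differentiating $H^+(4\pi e^{u/2}/c)$ in $u$ really does gain a factor of size $e^{u/2}/c\ll 1$ rather than costing anything---this is where the boundedness of $(H^+)'$ and $(H^+)''$ on $(0,4\pi)$, rather than just of $H^+$ itself, is essential, and it is exactly for this that Lemma~\ref{lemma bound H+ small x} was proved with those derivative bounds. A secondary point worth a sentence is the verification that $4\pi e^{u/2}/c<4\pi$ throughout the range of integration, i.e.\ that the argument of $H^+$ stays in the interval where Lemma~\ref{lemma bound H+ small x} applies; this follows from $c\ge N$, $u<\sigma\log N$ and $\sigma<2$, which forces $4\pi e^{u/2}/c < 4\pi N^{\sigma/2-1} \le 4\pi$ for all $N\ge1$. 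With these checks in place the lemma follows.
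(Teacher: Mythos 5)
Your proof is correct and follows essentially the same route as the paper: change variables so the integral is over a bounded $u$-range, integrate by parts twice to gain $|s|^{-2}$, and use the boundedness of $(H^+)'$ and $(H^+)''$ on $(0,4\pi)$ from Lemma~\ref{lemma bound H+ small x} together with the combination $e^{su}\cdot e^{u/2}=e^{(s+1/2)u}$ to get the exponent $\sigma|\Re(s)+\frac12|$. The only thing you gloss over is that the double integration by parts yields $|s|^{-2}$ rather than $(|s|+1)^{-2}$, so one should (as the paper does) treat $|s|\le 1$ separately by noting the trivial bound already suffices there.
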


\begin{proof}
The fact that $\widehat \phi$ is supported in $(-2,2)$ immediately implies that $\Psi_{\phi,N,c,h}(s)$ is entire.
Consider first the case $\lvert s \rvert >1$.
	We change variables $u= \frac{\log x}{\log N}$ in the definition of $\Psi_{\phi,N,c,h}(s)$ and then integrate by parts twice:
	\begin{align*}
		\Psi_{\phi,N,c,h}(s) &= \int_{\mathbb{R}} N^{us} \widehat\phi(u) H^+(4 \pi N^{\frac{u}{2}}/c) \d u \\
		&= -\int_{\mathbb{R}} \frac{N^{us}}{s \log N} \Big( \widehat\phi'(u) H^+(4 \pi N^{\frac{u}{2}}/c) + \widehat\phi(u) \frac{2 \pi}{c} N^{\frac{u}{2}}(\log N) (H^+)'(4 \pi N^{\frac{u}{2}}/c)\Big) \d u \\
		&= \int_{\mathbb{R}} \frac{N^{us}}{(s \log N)^2} \Big( \widehat\phi''(u) H^+(4 \pi N^{\frac{u}{2}}/c)   + \widehat\phi'(u) \frac{4 \pi}{c} N^{\frac{u}{2}}(\log N) (H^+)'(4 \pi N^{\frac{u}{2}}/c) \\& 
		\hspace{1cm} +   \widehat\phi(u) \frac{\pi}{c} N^{\frac{u}{2}}(\log N)^2 (H^+)'(4 \pi N^{\frac{u}{2}}/c) + \widehat\phi(u) \frac{4 \pi^2}{c^2} N^{u}(\log N)^2 (H^+)''(4 \pi N^{\frac{u}{2}}/c)\Big) \d u.
	\end{align*}
By Lemma~\ref{lemma bound H+ small x}, we deduce that
\begin{align*}
	\Psi_{\phi,N,c,h}(s) &\ll_A \int_{-\sigma}^{\sigma} \frac{N^{u \Re(s)}}{\lvert s\rvert^2}  \Big( \frac{N^{\frac{u}{2}}}{c}\Big)  \d u 
	\ll_A \frac{N^{\sigma \lvert\Re(s) + \frac{1}{2} \rvert}}{\lvert s \rvert^2 c} .
\end{align*}
Finally, for $\lvert s \rvert\leq1$ the bound $\Psi_{\phi,N,c,h}(s)  \ll \frac{N^{\sigma \lvert\Re(s) + \frac{1}{2} \rvert}}{c}$ follows directly from the definition and Lemma~\ref{lemma bound H+ small x}.
\end{proof}

We end this section by giving a brief sketch of the proof of Theorem~\ref{theorem second}, which is very similar to that of~\cite[Theorem 1.1 \& Remark 4.2]{DFS2}.
\begin{proof}[Proof of Theorem~\ref{theorem second}]
The first step is to rewrite the sum over primes in Proposition~\ref{Prop truncation} in terms of a contour integral. By Mellin inversion and the definition of $\Psi_{\phi,N,c,h}$ in~\eqref{equation definition Psi}, we obtain
\begin{multline*}
		\mathcal D^*(\phi,h;N)  = \widehat{\phi}(0) + \frac{\phi(0)}{2} 	+ O\Big(\frac{1}{\log X}\Big)
	\\  + O\bigg(   \sum_{\substack{c \equiv 0 \bmod N \\ c < N^{2}}}  \frac{1}{c\varphi(c)} \sum_{\substack{d\mid c \\d \neq 1}} d \underset{\chi \bmod d}{{\sum}^*}   \bigg|
  \int_{(2)}  \frac{L'(s+\frac12,\chi)}{L(s+\frac12,\chi)}\Psi_{\phi,N,c,h}(s) \d s \bigg|    \bigg).
	\end{multline*}
Then, using Lemma~\ref{lemma:bound Psi} in place of~\cite[Lemma 3.3]{DFS2}, we move the contour integral to the left as in~\cite[Proof of Proposition 3.4]{DFS2} in the case $k=2$, and obtain the estimate
\begin{multline}\label{equation dirichlet 3}
	\mathcal D^*(\phi,h;N) = \widehat{\phi}(0) + \frac{\phi(0)}{2}  	+ O\Big(\frac{1}{\log X}\Big)
	 \\ + O_{\eps}\bigg( N^{\eps} \sup_{ \frac N2 \leq D <  N^{2}} \sup_{\frac12\leq\beta<1} \sup_{1\leq T \leq N^5 }  \frac{N^{\sigma\beta}}{D^{2} T^2} \sum_{ \substack{d \sim D \\ d\equiv 0 \bmod N }} \underset{\chi \bmod d}{{\sum}^*} 
	\sum_{\substack{\rho_\chi \\ \beta\leq\Re(\rho_\chi) < \beta+ \frac{1}{\log N} \\ T-1\leq \lvert\Im(\rho_\chi)\rvert<2T } } 1\bigg),
\end{multline}
where $\rho_{\chi}$ is running over the non-trivial zeros of  the Dirichlet $L$-function~$L(s,\chi)$ and $d\sim D$ means $D<d \leq 2D$.
The estimate~\eqref{equation dirichlet 3} is actually identical to~\cite[Proposition 3.4]{DFS2} with $k = 2$, so the rest of the argument is the same as in~\cite[Proof of Theorem 1.1 \& Remark 4.2]{DFS2}. 
\end{proof}

\section{Handling the prime sum using Dirichlet polynomials}\label{sec:Primesum}

The final step in the proof of Theorem \ref{theorem main} is to bound the prime sum in Proposition \ref{Prop truncation}. In particular, we will prove the following proposition which together with Proposition~\ref{Prop truncation} immediately implies Theorem~\ref{theorem main}.

\begin{proposition}\label{prop 1}
	Let $\varepsilon > 0$ be fixed. Let $h\in \mathscr{H}(A,\delta)$ with $A>13 $, let $N$ be prime and let  $\phi$ be an even Schwartz function for which  ${\rm supp}(\widehat \phi) \subset (- \frac{15}8+\eps, \frac{15}8-\eps)$.
	Then
	\begin{align*}
		\sum_{ \substack{ c\equiv 0 \bmod N  \\ c< N^2}}\frac 1{c\varphi(c)} \sum_{\substack{d \mid c \\ d \neq 1}} d \sumstar  \Bigg| \sum_{n} \frac{\chi(n) }{n^{\frac{1}{2}}} \widehat \phi \Big( \frac{\log n}{\log N} \Big)H^{+}\Big(\frac{4\pi n^{\frac{1}{2}}}{c}\Big)\frac{\Lambda(n)}{\log N} \Bigg|  \ll N^{-\frac \eps{10}}.
	\end{align*}
\end{proposition}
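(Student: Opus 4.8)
The plan is to bound the triple sum by reducing it to mean values of Dirichlet polynomials, as sketched in the introduction. First I would use Lemma~\ref{lemma bound H+ small x} to write $H^+(4\pi n^{1/2}/c)$ as its Taylor expansion $\sum_m \alpha_{m,h}(4\pi n^{1/2}/c)^m + O((n^{1/2}/c)^{2\lfloor A\rfloor})$, valid since $n^{1/2}/c \ll N^{\sigma/2}/N \to 0$; the error term contributes negligibly because of the large power of $n^{1/2}/c$ and $A>13$. This leaves, for each fixed $m\geq 1$, a sum over $n$ of $\chi(n)\Lambda(n) n^{(m-1)/2}/c^m$ against $\widehat\phi(\log n/\log N)$, i.e. (after dyadic decomposition and partial summation to remove the smooth weight $\widehat\phi$) a bound for $\sum_{X_1 < n \leq 2X_1} \chi(n)\Lambda(n)$ with $X_1 \leq N^{\sigma}$, weighted by $X_1^{(m-1)/2}/c^m$. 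Crucially, since $\widehat\phi$ is supported in $(-\sigma,\sigma)$ with $\sigma = \frac{15}{8}-\eps$, the variable $n$ is restricted to $n \leq N^{\sigma}$, and the factor $X_1^{(m-1)/2}/c^m$ with $c \geq N$ means the $m\geq 2$ terms are smaller, so the dominant case is $m=1$, giving essentially $\frac{1}{c}\sum_{n \leq N^\sigma}\chi(n)\Lambda(n)/n^{1/2}$ weighted by $\log p/\log N$; using the smoothing one reduces to controlling $\int_{\RR} |\sum_{n\leq N^\sigma}\Lambda(n)\chi(n)n^{-1/2-it}|\,\frac{\d t}{t^2+1}$.

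Next I would carry out the arithmetic sum over $c$ and $d$. Writing $c = Nc'$ and summing $\frac{1}{c\varphi(c)}\sum_{d\mid c, d\neq 1} d\sum^*_{\chi\bmod d}(\cdots)$, I would interchange to sum first over $d$ (with $d\geq N$ or $d \mid c'$), picking up a factor from $\sum_{c \equiv 0 \bmod N, d\mid c, c<N^2} \frac{1}{c\varphi(c)} \ll \frac{1}{d^2}\cdot(\log N)^{O(1)}$ or so — more precisely the $1/(c\varphi(c))$ weight makes the $c$-sum converge and produces $\ll \frac{(\log N)^{O(1)}}{N d}$ after accounting for $d \mid c$. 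The net effect is to reduce the whole expression to something like $N^{-1+\eps}\sum_{d < N^2}\frac{1}{d}\sum^*_{\chi \bmod d}|D_\chi|$ where $D_\chi$ is the relevant Dirichlet polynomial integral, with the extra factor $d$ from the statement partially cancelled. I would then apply Heath-Brown's identity to $\Lambda(n)$ on the range $n \leq N^\sigma$, decomposing into $O((\log N)^{O(1)})$ pieces each of which is a Dirichlet convolution $f_1 * \cdots * f_{2j}$ with each $f_i$ either a smooth coefficient or $\mu$, supported on variables of controlled size; as noted in the excerpt it is technically cleaner to apply this before Mellin inversion, i.e. directly to the sum over $n$.

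The heart of the matter is then bounding, for each Heath-Brown piece, the average over primitive characters $\chi \bmod d$, $d < N^2$ (equivalently summing the modulus up to $N^2$, hence the large sieve is available with $D^2 = N^4$ essentially matching the length $N^\sigma$ of the polynomial when $\sigma$ is near $2$), of the corresponding product of shorter Dirichlet polynomials. Here I would split the Dirichlet polynomial $\sum_n \Lambda(n)\chi(n) n^{-1/2-it}$ according to which factors in the Heath-Brown decomposition are ``long'' and combine them by Hölder's inequality into a few factors, then estimate each factor by either the large sieve inequality (for a general Dirichlet polynomial of length $L$ over characters of conductor up to $Q$, giving $\ll (Q^2 + L)L^\eps \|\cdot\|_2^2$) or, for factors of length close to $Q^2$ where the large sieve is lossy, by the fourth moment bound for Dirichlet $L$-functions $\sum^*_{\chi\bmod q}\int_{-T}^T |L(\tfrac12+it,\chi)|^4\,\d t \ll (qT)^{1+\eps}$. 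Balancing the exponents of these two inputs against each other — and against the $t^2+1$ weight, which effectively truncates $|t| \ll N^\eps$ after a dyadic decomposition in $t$ — is exactly what forces the threshold $\sigma < \frac{15}{8}$: the combinatorially worst case (roughly, when $\Lambda$ is split into two factors of length $N^{\sigma/2}$ each, handled by the fourth moment and the large sieve respectively) breaks even at $\sigma = \frac{15}{8}$, and this balancing is the main obstacle. The $\eps$-losses from Heath-Brown's $(\log N)^{O(1)}$ number of pieces and from the large-sieve/fourth-moment $N^\eps$ factors are absorbed into the final $N^{-\eps/10}$ by working with support in $(-\frac{15}{8}+\eps,\frac{15}{8}-\eps)$, so that the main term beats $1$ by a fixed power of $N$.
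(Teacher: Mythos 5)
Your high-level strategy matches the paper's exactly: apply Heath--Brown's identity, reduce to averages of Dirichlet polynomials, and balance the large sieve against the fourth moment of Dirichlet $L$-functions. Taylor-expanding $H^+$ via Lemma~\ref{lemma bound H+ small x} rather than bounding the Mellin transform $\Psi_{\phi,N,c,h}$ directly as in Lemma~\ref{lemma:bound Psi} is a cosmetic difference; both steps ultimately hinge on the leading-order behaviour $H^+(x)\ll x$, and after Mellin inversion one lands on the same object.

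The genuine gap is your identification of the worst case, which is precisely where the threshold $\frac{15}{8}$ comes from. Two Heath--Brown factors of length $N^{\sigma/2}$ each pose no difficulty: applying H\"older with the fourth moment (Lemma~\ref{lemma fourth moment}(ii)) to both factors and the trivial second moment of the empty product gives $\ll d\,(\log dN)^{O(1)}$, well below the target $N^{17/16}$ when $d=N$; even the cruder Cauchy--Schwarz plus large sieve route for two such factors only breaks down around $\sigma=\frac{17}{8}$. The actual bottleneck, as the remark following Proposition~\ref{prop:charsumclaim} records, is \emph{five} factors of length $\approx N^{3/8}$ whose product is $\approx N^{15/8}$: these are handled by a $3+2$ split, Cauchy--Schwarz, and the large sieve (Lemma~\ref{lemma orthogonality}) alone, and the fourth moment does not enter that configuration at all. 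More structurally, your sketch omits the combinatorial dichotomy of Lemma~\ref{le:splitting} that organises the argument: one can always either (a) find a subset $I$ with $N^{3/4+\varepsilon/100}<\prod_{j\in I}N_j\le N^{9/8-\varepsilon/100}$, in which case Cauchy--Schwarz and the large sieve suffice, or (b) find two long factors $j_1,j_2$ with $\prod_{j\neq j_1,j_2}N_j\le N^{9/8-\varepsilon/100}$, in which case H\"older with two fourth moments and one second moment suffices. Without such a dichotomy it is not clear that the two tools you invoke cover every Heath--Brown decomposition with $N_1\dotsm N_{40}\le N^{15/8-\varepsilon}$, and the worst case you describe would have led you to an incorrect threshold.
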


Let us first show that it suffices to prove that, for any $c \equiv 0 \pmod{N}$ with $c < N^2$ and any $d \mid c$ with $d \neq 1$, we have
\begin{equation}
\label{eq:dsumclaim}
\sumstar  \Bigg| \sum_{n} \frac{\chi(n) }{n^{\frac{1}{2}}} \widehat \phi \Big( \frac{\log n}{\log N} \Big)H^{+}\Big(\frac{4\pi n^{\frac{1}{2}}}{c}\Big)\frac{\Lambda(n)}{\log N} \Bigg| \ll \frac{c}{d} N^{1-\frac \eps 5}.
\end{equation}
Indeed, assuming~\eqref{eq:dsumclaim}, we obtain
\begin{multline*}
\sum_{ \substack{ c\equiv 0 \bmod N  \\ c< N^2}}\frac 1{c\varphi(c)} \sum_{\substack{d \mid c \\ d \neq 1}} d \sumstar \Bigg| \sum_{n} \frac{\chi(n) }{n^{\frac{1}{2}}} \widehat \phi \Big( \frac{\log n}{\log N} \Big)H^{+}\Big(\frac{4\pi n^{\frac{1}{2}}}{c}\Big)\frac{\Lambda(n)}{\log N} \Bigg| \\
\ll N^{1-\frac \eps 5} \sum_{ \substack{ c\equiv 0 \bmod N  \\ c< N^2}}\frac {1}{\varphi(c)} \sum_{\substack{d \mid c \\ d \neq 1}} 1 \leq N^{1-\frac \eps 5} \sum_{c' < N} \frac{\tau(c'N)}{\varphi(c'N)} \ll N^{-\frac \eps {10}}.
\end{multline*}

Our first step in proving~\eqref{eq:dsumclaim} is an application of Heath-Brown's identity (see for example \cite[Proposition 13.3]{IK}  with $z = N^{ \frac 1{10}}$ and $K = 20$; since the sum over $n$ in~\eqref{eq:dsumclaim} is supported on $n \leq N^{\frac {15}8-\eps}$, Heath-Brown's identity is applicable with these choices). We obtain that the left-hand side of~\eqref{eq:dsumclaim} equals
\begin{align*}
&\sumstar \Bigg|\sum_{j=1}^{20}(-1)^j\binom{20}{j}\sum_{\substack{n_1, \dotsc, n_{2j} \\
	n_{j+1},\dots,n_{2j} \leq N^{ \frac 1{10}}}}\frac{\log(n_j)\mu(n_{j+1})\dotsm \mu(n_{2j})\chi(n_1\dotsm n_{2j})}{(n_1\dotsm n_{2j})^{ \frac 12}} \\
	& \qquad \qquad \qquad \times \widehat \phi \Big( \frac{\log (n_1\dotsm n_{2j})}{\log N} \Big)H^{+}\Big(\frac{4\pi (n_1\dotsm n_{2j})^{\frac{1}{2}}}{c}\Big)\frac{1}{\log N} \Bigg|.
\end{align*}
We split all the variables $n_\ell$ into dyadic ranges $n_\ell \sim N_\ell$ (i.e. $n_\ell \in (N_\ell, 2N_\ell]$) with $N_\ell \geq \frac 12$. To unify the notation, in the case $j < 20$, we add dumb variables $n_{2j+1}, \dotsc, n_{40}$ with $N_{2j+1} = \dotsb = N_{40} = \frac 12$. Now~\eqref{eq:dsumclaim} follows once we have shown that
\begin{align}
\begin{aligned}
\label{eq:afterH-B}
&\sumstar \Bigg|\sum_{\substack{n_1, \dotsc, n_{40} \\ n_j \sim N_j}} \frac{a_1(n_1) \dotsm a_{40}(n_{40}) \chi(n_1 \dotsm n_{40})}{(n_1\dotsm n_{40})^{\frac 12}} \\
	& \qquad \qquad \times \widehat \phi \Big( \frac{\log (n_1\dotsm n_{40})}{\log N} \Big)H^{+}\Big(\frac{4\pi (n_1\dotsm n_{40})^{\frac{1}{2}}}{c}\Big)\frac{1}{\log N} \Bigg| \ll \frac{c}{d}N^{1-\frac \eps 4}
\end{aligned}
\end{align}
whenever, for each $j = 1, \dotsc, 40$, we have that $N_j \geq \frac 12$ and that $a_j(n)$ is $1$, $\mu(n)$ or $\log(n)$, and $a_j(n)$ is 1 or $\log(n)$ if $N_j>N^{\frac 1{ 10}}.$ 

Recalling the support of $\widehat{\phi}$, we see that it suffices to consider the case
\[
N_1 \dotsm N_{40} \leq N^{\frac {15}8-\varepsilon}.
\]
Applying Mellin inversion, the left-hand side of~\eqref{eq:afterH-B} equals
\begin{align*}
\sumstar
  \Bigg| \frac1{2\pi} \int_{-\infty}^{\infty} \sum_{\substack{n_1, \dotsc, n_{40} \\ n_j \sim N_j}} \frac{a_1(n_1) \dotsm a_{40}(n_{40}) \chi(n_1 \dotsm n_{40})}{(n_1\dotsm n_{40})^{ \frac 12+it}} \Psi_{\phi,N,c,h}(it) \d t\Bigg|,
\end{align*}
where $\Psi_{\phi,N,c,h}$ is as in~\eqref{equation definition Psi}. By Lemma \ref{lemma:bound Psi} and the fact that $c \in [\max\{N, d\}, N^2]$ we see that
\[
  \Psi_{\phi, N, c, h}(it) \ll \frac{N^{\frac{15}{16}-\frac{\varepsilon}{2}}}{(t^2+1)c} \ll \frac{c N^{\frac{15}{16}-\frac\varepsilon2}}{(t^2+1) (N+d)^2}.
  \]
  This estimate may look lossy, but the most difficult case to handle is $c = d = N$ in which case it is sharp. Now~\eqref{eq:afterH-B}, and thus Proposition~\ref{prop 1}, follows once we have shown the following proposition.
  
\begin{proposition}\label{prop:charsumclaim}
Let $d \geq 2$ be an integer, let $N \geq 2$ and $N_1, \dotsc, N_{40} \geq \frac 12$. For each $j = 1, \dotsc, 40$, let $a_j(n)$ be  $1$, $\mu(n)$ or $\log(n)$, and assume that $a_j(n)$ is $1$ or $\log(n)$ for every $j$ for which $N_j>N^{\frac 1{10}}$. Assume also that
\begin{equation}
\label{eq:Niproductbound}
N_1 \dotsm N_{40} \leq N^{ \frac{15}8-\varepsilon}.
\end{equation}
Then
\begin{equation}
\label{eq:charsumclaim}
\int_{-\infty}^\infty\sumstar\left|\prod_{j\leq 40}\sum_{n_j\sim N_j}\frac{a_j(n_j)\chi(n_j)}{ n_{j}^{ \frac 12+it}}\right|\frac{\d t}{t^2+1}\ll \frac{(N+d)^2N^{ \frac 1{16}}}{d} (\log(dN))^{O(1)}.
\end{equation}
\end{proposition}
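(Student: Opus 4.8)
The plan is to localise $t$ first and then tear the $40$-fold product apart by Hölder's inequality, estimating the resulting factors with the large sieve, the functional equation of $L(s,\chi)$, and the fourth power moment of $L(s,\chi)$. Write $B_j(\chi,t):=\sum_{n_j\sim N_j}a_j(n_j)\chi(n_j)n_j^{-\frac12-it}$, so the integrand in~\eqref{eq:charsumclaim} is $(t^2+1)^{-1}\sumstar\prod_{j\le 40}|B_j(\chi,t)|$. Splitting $\RR$ into $|t|\le1$ together with dyadic ranges $|t|\asymp T$ ($T\geq 1$), on which $(t^2+1)^{-1}\asymp T^{-2}$, it is enough to prove for each such $T$ that
\[
\sumstar\int_{|t|\asymp T}\ \prod_{j\le 40}|B_j(\chi,t)|\ \d t\ \ll\ \frac{(N+d)^2N^{\frac1{16}}}{d}\,T\,(\log dN)^{O(1)},
\]
since summing against $T^{-2}$ over dyadic $T\geq 1$ then converges (one is left with $\sum_{k\geq 0}2^{-k}$). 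The workhorse estimate is that, by orthogonality of characters and the hybrid large sieve, any Dirichlet polynomial $\sum_{m\sim M}c(m)\chi(m)m^{-\frac12-it}$ with $|c(m)|\ll\tau(m)^{O(1)}$ obeys $\sumstar\int_{|t|\asymp T}|\cdots|^2\,\d t\ll(dT+M)(\log dTM)^{O(1)}$, hence also $\sumstar\int_{|t|\asymp T}|\cdots|^4\,\d t\ll(dT+M^2)(\log dTM)^{O(1)}$ by squaring; this applies verbatim to any product $\prod_{j\in S}B_j$, viewed as a single Dirichlet polynomial of length $\prod_{j\in S}N_j$.

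The extra leverage comes from the \emph{long} factors, those with $N_j>N^{1/10}$, which by construction carry coefficients $a_j\in\{1,\log\}$; each such $B_j(\chi,t)$ is a difference of two partial sums of $\sum_n\chi(n)n^{-s}$ or $\sum_n\chi(n)(\log n)n^{-s}$ at $s=\tfrac12+it$, so Perron's formula combined with the functional equation of $L(s,\chi)$ (legitimate since $\chi$ is primitive modulo~$d$) rewrites it, up to a unimodular factor and an acceptable error, as its dual --- a dyadic Dirichlet polynomial of length $\asymp d(|t|+1)/N_j$ twisted by $\bar\chi$. Performing this reflection whenever $N_j>\sqrt{d(|t|+1)}$, we may assume every factor has length $\ll\sqrt{d(|t|+1)}$; taking the smaller of the direct and reflected representations, each single long factor then satisfies $\sumstar\int_{|t|\asymp T}|B_j|^2\,\d t\ll dT(\log)^{O(1)}$ and $\sumstar\int_{|t|\asymp T}|B_j|^4\,\d t\ll dT(\log)^{O(1)}$, which keeps configurations dominated by one or a few long factors of comparable size under control. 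For blocks that, after reflection, sit right at the conductor scale $\sqrt{d(|t|+1)}$ and therefore genuinely behave like $L(\tfrac12+it,\chi)$ or $L(\tfrac12+it,\chi)^2$ --- and to absorb the error terms produced in the reflection step --- we call instead on the fourth moment bound $\sumstar\int_{|t|\asymp T}|L(\tfrac12+it,\chi)|^4\,\d t\ll_\varepsilon(dT)^{1+\varepsilon}$.

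With these tools the recipe is: partition $\{1,\dots,40\}$ into at most four blocks $S_1,\dots,S_r$ by multiplying the (possibly reflected) lengths, as balanced as the largest factor permits, and apply Hölder over $\chi$ and $t$ with exponents $(4,4,4,4)$, $(3,3,3)$, or $(2,2)$ as appropriate, spreading the interpolation $\int|B|^3\le(\int|B|^2)^{1/2}(\int|B|^4)^{1/2}$ onto long factors when three blocks are used. Estimating each block by the large sieve or by the fourth moment bound as above and invoking $N_1\cdots N_{40}\le N^{15/8-\varepsilon}$, a finite case analysis --- organised by the sizes of the two or three largest $N_j$ relative to $d$, $T$ and $\sqrt{dT}$ --- shows that the left-hand side displayed above is always $\ll\bigl(d+\sqrt{N_1\cdots N_{40}}\,\bigr)T(\log dN)^{O(1)}$, which suffices because $d+\sqrt{N_1\cdots N_{40}}\le d+N^{15/16}$ is $\ll(N+d)^2N^{1/16}/d$ with an $N^{1/16}$ of room to spare for the logarithmic factor. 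The value $\tfrac{15}8=2-\tfrac18$ is exactly the point where this optimisation is tight, the extremal configurations being those in which the whole of $N_1\cdots N_{40}$ is carried by two or three long factors of roughly equal length.

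The crux of the argument is therefore this optimisation and the case analysis behind it: one must treat every admissible profile of $(N_1,\dots,N_{40})$ --- a single factor of length near $N^{15/8}$, a handful of long factors of comparable size (where the functional equation, the fourth moment bound and Hölder's inequality must be combined in just the right proportions), or a long product of short $\mu$-twisted factors --- and one must check that the numerous but harmless powers of $\log dN$ and of $N^\varepsilon$ generated by the large sieve, by the sharp-cutoff Perron and functional-equation step, and by the fourth moment bound are all swallowed by the stated $(\log dN)^{O(1)}$ and the $\varepsilon$-room in the hypothesis. The remaining ingredients --- the sum over dyadic $T$, the sum over the (essentially polynomially many) dyadic length profiles, and the bookkeeping of the weight $(t^2+1)^{-1}$ --- are routine once the right blocking has been identified.
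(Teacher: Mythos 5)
Your general toolkit --- dyadic localisation in $t$, H\"older's inequality in various configurations, the large sieve, and a fourth-moment input for the factors with coefficients $1$ or $\log$ --- is in the same spirit as the paper's argument, but the proposal contains a claim that is provably false and that hides the actual difficulty. You assert that a case analysis yields, uniformly over dyadic $T\ge 1$,
\[
\sumstar\int_{|t|\asymp T}\prod_{j\le 40}|B_j(\chi,t)|\,\d t\ll \bigl(d+\sqrt{N_1\dotsm N_{40}}\bigr)\,T\,(\log dN)^{O(1)},
\]
``with an $N^{1/16}$ of room to spare.'' If such a bound were available, then $\prod_jN_j\le N^{2-\eta}$ for any $\eta>0$ would already suffice, i.e.\ the theorem would hold unconditionally for ${\rm supp}(\widehat\phi)\subset(-2,2)$; this contradicts Theorem~\ref{theorem second} (which needs the Grand Density Conjecture for that range) and the remark following the proposition (where the configuration $d=N$, $N_1=\dots=N_5\asymp N^{3/8}$ is identified as the obstruction). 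Concretely, in that extremal configuration the H\"older step unavoidably produces a cross term $d^{1/2}\bigl(\prod_{j\notin\{j_1,j_2\}}N_j\bigr)^{1/2}\asymp N^{1/2}\cdot N^{9/16}=N^{17/16}$, which exceeds $d+\sqrt{\prod N_j}\asymp N$ and only barely squeezes under the target $(N+d)^2N^{1/16}/d\asymp N^{17/16}$. So there is no room to spare, the bound you state cannot be proved, and the case analysis that would identify the true worst case (and show it is just barely acceptable) is exactly the content you have not supplied. The paper supplies it through Lemma~\ref{le:splitting}, which isolates two regimes (a balanced split handled by Cauchy--Schwarz and the large sieve; an unbalanced one handled by $(2,4,4)$ H\"older with two fourth-moment factors), and then verifies the resulting bounds against the carefully tailored right-hand side of~\eqref{eq:charsumclaim}.

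Two further remarks on the differences. First, your fourth-moment input uses a Perron-plus-functional-equation reflection to replace a long factor by its dual of length $\asymp d(1+|t|)/N_j$; the paper instead proves Lemma~\ref{lemma fourth moment}(ii) by applying Perron's formula and the classical fourth moment of Dirichlet $L$-functions directly to the weighted integral $\int(\cdot)\,\d t/(t^2+1)$, so no reflection, no $\chi$-dependent root numbers, and no error terms from an approximate functional equation are needed. Your route can probably be made to work, but the error terms and the ``right at the conductor scale'' blocks you allude to are precisely where the work lies, and none of it is carried out. Second, the dyadic-$T$ decomposition is harmless but introduces the very bookkeeping (the $T$-dependence of the fourth-moment bound, the convergence of $\sum_T T^{-1}$) that the paper avoids by keeping the weight $(t^2+1)^{-1}$ throughout; your statements such as $\sumstar\int_{|t|\asymp T}|B_j|^4\,\d t\ll dT(\log)^{O(1)}$ depend on the reflection being carried out to replace the $dT^2$ one gets from the straight Perron argument, so again the gap is exactly where rigour is required.
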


In the proof of Proposition~\ref{prop:charsumclaim}, we use the following large sieve type lemmas. The proofs are standard, but we include them for completeness.
	\begin{lemma}\label{lemma orthogonality}
		Let $d \in \mathbb{N}$ and $X \geq 1$. For any complex numbers $a(n)$, we have
		$$\sumstar\bigg|\sum_{n\leq X}\chi(n)a(n)\bigg|^2\ll (d+X)\sum_{n\leq X}|a(n)|^2.$$
	\end{lemma}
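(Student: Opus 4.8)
The plan is to prove this by the standard orthogonality argument for Dirichlet characters, which in fact gives a stronger bound than one would get by quoting the multiplicative large sieve with the single modulus $d$ (that route would produce $d\varphi(d)+X$ rather than $d+X$).

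First, since each summand $\bigl|\sum_{n\leq X}\chi(n)a(n)\bigr|^2$ is nonnegative, I would drop the restriction to primitive characters and bound the left-hand side by $\sum_{\chi\bmod d}\bigl|\sum_{n\leq X}\chi(n)a(n)\bigr|^2$. Next, expand the square and interchange the order of summation to rewrite this as $\sum_{m,n\leq X}a(n)\overline{a(m)}\sum_{\chi\bmod d}\chi(n)\overline{\chi(m)}$, and invoke the orthogonality relation: $\sum_{\chi\bmod d}\chi(n)\overline{\chi(m)}$ equals $\varphi(d)$ when $(mn,d)=1$ and $n\equiv m\pmod d$, and vanishes otherwise. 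This collapses the expression to
\[
\varphi(d)\sum_{\substack{r\bmod d\\(r,d)=1}}\Bigl|\sum_{\substack{n\leq X\\ n\equiv r\bmod d}}a(n)\Bigr|^2 .
\]

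Finally, within each residue class apply Cauchy--Schwarz:
\[
\Bigl|\sum_{\substack{n\leq X\\ n\equiv r\bmod d}}a(n)\Bigr|^2\leq\#\{n\leq X:n\equiv r\bmod d\}\cdot\sum_{\substack{n\leq X\\ n\equiv r\bmod d}}|a(n)|^2\leq\Bigl(1+\tfrac Xd\Bigr)\sum_{\substack{n\leq X\\ n\equiv r\bmod d}}|a(n)|^2 .
\]
Summing over the residue classes $r$ gives the bound $\varphi(d)(1+X/d)\sum_{n\leq X}|a(n)|^2$, and the claim follows from $\varphi(d)+\varphi(d)X/d\leq d+X$ (using $\varphi(d)\leq d$).

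There is no genuine obstacle: the lemma is routine and the proof is short. The only points requiring a bit of care are correctly tracking the coprimality condition $(mn,d)=1$ produced by orthogonality (so that the inner sums run over $r$ with $(r,d)=1$) and the elementary count $\#\{n\leq X:n\equiv r\bmod d\}\leq 1+X/d$.
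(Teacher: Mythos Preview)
Your proof is correct and follows essentially the same route as the paper: expand the square, use orthogonality of characters modulo $d$ to reduce to pairs $n\equiv m\pmod d$, and then bound by $\varphi(d)(1+X/d)\sum_{n\leq X}|a(n)|^2$. The only cosmetic difference is that the paper applies the inequality $|a(n_1)a(n_2)|\leq\tfrac12(|a(n_1)|^2+|a(n_2)|^2)$ directly to the double sum over congruent pairs, whereas you first group into residue classes and then apply Cauchy--Schwarz; the resulting bounds are identical.
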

\begin{proof}
This follows easily from the orthogonality of characters. Indeed, using also the inequality $|a(n_1) a(n_2)| \leq (|a(n_1)|^2+|a(n_2)|^2)/2$ and symmetry, we obtain
\begin{align*}
\sumstar\bigg|\sum_{n\leq X}\chi(n)a(n)\bigg|^2 &\leq \varphi(d)\sum_{\substack{n_1, n_2 \leq X \\ n_1 \equiv n_2 \bmod{d}}} |a(n_1) a(n_2)| \leq \varphi(d)\sum_{\substack{n_1, n_2 \leq X \\ n_2 \equiv n_1 \bmod{d}}} |a(n_1)|^2 \\
&\leq \varphi(d) \left(\frac{X}{d}+1\right) \sum_{n \leq X} |a(n)|^2,
\end{align*}
which gives the desired estimate.
\end{proof}
	\begin{lemma}\label{lemma fourth moment}
Let $d \geq 2$ be an integer and let $X \geq 2$.
		\begin{enumerate}
			\item[(i)] We have, for any bounded complex coefficients  $b(n)$, 
			\begin{equation*}
				\int_{-\infty}^{\infty}\sumstar\Bigg|\sum_{n\leq X}\frac{b(n)\chi(n)}{n^{ \frac 12+it}}\Bigg|^4\frac{\d t}{t^2+1}\ll (d+X^2) (\log X)^4.
			\end{equation*}
		\item[(ii)] Let either $a(n)=\log n$ for each $n \leq X$, or $a(n)=1$ for each $n \leq X$. Then
		\begin{equation*}
			\int_{-\infty}^{\infty}\sumstar\Bigg|\sum_{n\leq X}\frac{a(n)\chi(n)}{n^{\frac 12+it}}\Bigg|^4\frac{\d t}{t^2+1}\ll d (\log(dX))^{13}.
		\end{equation*}
		\end{enumerate}
\end{lemma}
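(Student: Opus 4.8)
The plan is to treat the two parts by a common scheme: square the Dirichlet polynomial so that the fourth power becomes the mean square of a single Dirichlet polynomial of length $X^2$, apply orthogonality of the characters modulo $d$, and then integrate over $t$ dyadically against $(t^2+1)^{-1}$. For part~(i) the only arithmetic input is a hybrid mean value theorem for Dirichlet polynomials, namely the $t$-integrated version of Lemma~\ref{lemma orthogonality}; for part~(ii), where the coefficients come from the constant function or from $\log$, one additionally relates the truncated sum to $L(\tfrac12+it,\chi)$ (respectively $-L'(\tfrac12+it,\chi)$) via the approximate functional equation and then invokes the classical hybrid fourth moment bound for Dirichlet $L$-functions.

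For part~(i): write $\bigl(\sum_{n\le X}b(n)\chi(n)n^{-\frac12-it}\bigr)^2=\sum_{m\le X^2}c(m)\chi(m)m^{-\frac12-it}$, where $c(m)=\sum_{n_1n_2=m,\ n_1,n_2\le X}b(n_1)b(n_2)$ satisfies $|c(m)|\le\tau(m)$. Since the integrand is nonnegative we may replace $\sumstar$ by the full sum over $\chi\bmod d$. Split the $t$-integral into $|t|\le1$, where $(t^2+1)^{-1}\le1$, and the dyadic ranges $|t|\sim T=2^j$, where $(t^2+1)^{-1}\asymp T^{-2}$. On each range one is left with a mean square of a Dirichlet polynomial of length $\le X^2$ over a $t$-interval of length $\asymp\max(T,1)$; applying the Montgomery--Vaughan mean value theorem inside each residue class modulo $d$ (which is what produces the linear, rather than quadratic, dependence on $d$) yields
\[
\sum_{\chi\bmod d}\int_{|t|\sim T}\Bigl|\sum_{m\le X^2}c(m)\chi(m)m^{-\frac12-it}\Bigr|^2\d t\ll d(T+1)\sum_{m\le X^2}\frac{|c(m)|^2}{m}+\sum_{m\le X^2}|c(m)|^2.
\]
Inserting $\sum_{m\le X^2}\tau(m)^2/m\ll(\log X)^4$ and $\sum_{m\le X^2}\tau(m)^2\ll X^2(\log X)^3$, weighting by $T^{-2}$ and summing the resulting geometric series over $T$ (together with the $|t|\le1$ term) gives the claimed bound $(d+X^2)(\log X)^4$.

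For part~(ii): the key point is that the trivial bound $\sum_{n\le X}a(n)\chi(n)n^{-\frac12-it}\ll\sqrt d\log d$ coming from P\'olya--Vinogradov would only produce $d^3$, so one must extract the correct power $d$ from the fourth moment of the relevant $L$-function. Dyadically in $t$, and using the approximate functional equation (equivalently, Perron's formula with a contour shift), one writes $\sum_{n\le X}a(n)\chi(n)n^{-\frac12-it}$ in terms of $L(\tfrac12+it,\chi)$ if $a=1$, or $L'(\tfrac12+it,\chi)$ if $a=\log$, up to a shorter Dirichlet polynomial whose fourth moment is controlled by part~(i); where $X$ is short compared to the analytic conductor one instead uses part~(i) directly, which suffices there since then $X^2$ is dominated by the main term. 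The main terms are then bounded by the classical hybrid fourth moment $\sum_{\chi\bmod d}\int_{-T}^{T}|L(\tfrac12+it,\chi)|^4\d t\ll dT(\log dT)^4$, together with its analogue for $L'$, the latter obtained by writing $L'$ as a contour integral of $L$ over a small circle and shifting, at the cost of a bounded power of $\log$. Weighting each dyadic piece by $T^{-2}$, summing, and adding the $|t|\le1$ contribution produces a convergent series bounded by $d(\log dX)^{O(1)}$; carefully tracking the logarithmic factors contributed by the fourth moments of $L$ and $L'$, by the sums $\sum_{m\le X^2}|c(m)|^2/m$ (which carry extra $\log$'s when $a=\log$), and by the approximate functional equation yields the exponent $13$.

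The step I expect to cause the most trouble is part~(ii), specifically organizing the approximate-functional-equation comparison so that it is valid and lossless across the full ranges of $d$, $X$ and $t$: one must check that the error committed in replacing the partial sum by the $L$-value, together with the tail contributions, really do have fourth moment $\ll dT(\log dT)^{O(1)}$ after the $t$-integration, and that the fourth moment bound for $L'$ does not degrade the power of $\log$ beyond control. In part~(i) the only delicate point is the $d$-dependence in the hybrid mean value theorem: the crude large sieve summed over all moduli up to $d$ would give the factor $d^2$, so one must instead apply the one-dimensional Montgomery--Vaughan inequality within each residue class modulo $d$, which is precisely the $t$-integrated refinement of Lemma~\ref{lemma orthogonality}.
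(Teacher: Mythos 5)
Your core strategy matches the paper on both parts --- orthogonality applied to the squared polynomial for~(i), and the hybrid fourth moment of Dirichlet $L$-functions for~(ii) --- but the execution differs in ways that make the paper's route noticeably simpler. For~(i) the paper applies Lemma~\ref{lemma orthogonality} \emph{pointwise} in $t$, to the length-$X^2$ polynomial with coefficients $\sum_{n_1n_2=m}b(n_1)\overline{b(n_2)}m^{-it}/\sqrt m$; the resulting bound $(d+X^2)\sum_{m\le X^2}\tau(m)^2/m\ll(d+X^2)(\log X)^4$ is uniform in $t$, so the $t$-integral just costs the constant $\int\d t/(t^2+1)=\pi$. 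Your dyadic decomposition in $t$ and hybrid Montgomery--Vaughan input are unnecessary overhead, and the particular inequality you display, with the factor $d$ attached to only one of the two terms, is not what a direct application of orthogonality produces. For~(ii) the paper first reduces the case $a(n)=\log n$ to the case $a(n)=1$ via partial summation (followed by H\"older), so it never needs a fourth moment for $L'$ and avoids your Cauchy-integral detour; it then discards $|t|\ge X^2$ using part~(i), applies Perron's formula with $V=(dX)^2$, shifts the contour to $\Re s=1/\log(dX)$ (no poles are crossed, since $\chi$ is primitive and $d\ge2$), separates the contour integral from the fourth power by H\"older, and inserts the classical fourth moment bound dyadically in $|u-t|$. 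You yourself note that Perron plus a contour shift is an equivalent substitute for the approximate functional equation; that is exactly the formulation the paper adopts, and it bypasses the truncation-versus-conductor bookkeeping that you correctly flag as the main difficulty with a literal AFE comparison.
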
 

\begin{proof} Applying Lemma~\ref{lemma orthogonality} with 
\[
a(n) = \sum_{\substack{n = n_1 n_2 \\ n_1, n_2 \leq X}} \frac{b(n_1) \overline{b(n_2)}}{n^{ \frac 12+it}} \ll \mathbf{1}_{n \leq X^2} \frac{\tau(n)}{n^{ \frac 12}},
\]
we obtain, for any $t \in \mathbb{R}$,
\[
\sumstar\Bigg|\sum_{n\leq X}\frac{b(n)\chi(n)}{n^{\frac 12+it}}\Bigg|^4 \ll 	(d+X^2)\sum_{n \leq X^2} \frac{\tau(n)^2}{n} \ll (d+X^2) \prod_{p \leq X^2} \left(1+\frac{4}{p}\right) \ll (d+X^2) (\log X)^4,
\]
and part (i) follows.

We now move to part (ii). Partial summation and H\"older's inequality give, for any $t \in \mathbb{R}$,
$$
\sumstar\Bigg|\sum_{n\leq X}\frac{\chi(n)\log n}{n^{ \frac 12+it}}\Bigg|^4\ll(\log X)^4\sumstar\Bigg|\sum_{n\leq X}\frac{\chi(n)}{n^{ \frac 12+it}}\Bigg|^4+(\log X)^3\sumstar\int_1^X\Bigg|\sum_{n\leq u}\frac{\chi(n)}{n^{\frac 12+it}}\Bigg|^4\frac{\d u}{u}.
$$
Hence it suffices to show that, for any $X \geq 2$,
		\begin{equation*}
			\int_{-\infty}^{\infty}\sumstar\Bigg|\sum_{n\leq X}\frac{\chi(n)}{n^{\frac 12+it}}\Bigg|^4\frac{\d t}{t^2+1}\ll d (\log(dX))^{9}.
		\end{equation*}

First note that for $|t| \geq X^2$, we have similarly to part (i)
\begin{align*}
\int_{|t|\geq X^2}\sumstar\Bigg|\sum_{n\leq X}\frac{\chi(n)}{n^{\frac 12+it}}\Bigg|^4\frac{\d t}{t^2+1}&\ll \int_{|t|\geq X^2} (d+X^2) \sum_{n \leq X^2} \frac{\tau(n)^2}{n} \frac{\d t}{t^2+1} \\
&\ll \frac{d+X^2}{X^2} (\log X)^4,
\end{align*}
which is acceptable. Hence we can restrict the integral to $|t| < X^2$. Next, by Perron's formula {(see for example \cite[Theorem 7.2]{Ko})}, we have, for any $V \geq 2$,
\begin{equation*}
	\begin{aligned}
	&\int_{-X^2}^{X^2}\sumstar\Bigg|\sum_{n\leq X}\frac{\chi(n)}{n^{ \frac 12+it}}\Bigg|^4\frac{\d t}{t^2+1}\\
	&\ll\int_{-X^2}^{X^2}\sumstar\Bigg|\int_{1-iV}^{1+iV}L(s+\tfrac 12+it,\chi)\frac{X^s}{s}\d s\Bigg|^4\frac{\d t}{t^2+1}+\frac{dX^4(\log X)^4}{V^4}+d.
\end{aligned}
\end{equation*}
Choosing $V = (dX)^2$, the error terms are certainly acceptable, and it suffices to show that, for any $t \in [-X^2, X^2]$, we have 
\begin{equation}
\label{eq:4thclaimnotint}
\sumstar\left|\int_{1-i(dX)^2}^{1+i(dX)^2}L(s+\tfrac 12+it,\chi)\frac{X^s}{s}\d s\right|^4 \ll d (1+|t|) (\log(dX))^8.
\end{equation}
Next, we shift the integral to $\Re(s)=\alpha:=1/\log (dX)$ (note that since $d \geq 2$ and we sum over primitive characters, we do not cross any poles, and the contribution of the horizontal lines is acceptable by the convexity bound) and use H\"older's inequality, which yields
\begin{equation}\label{fourth moment holder and perron}
	\begin{aligned}
	&\sumstar\left|\int_{1-i(dX)^2}^{1+i(dX)^2} L(s+\tfrac 12+it,\chi)\frac{X^s}{s}\d s\right|^4\\
	&\ll\sumstar\int_{-(dX)^2}^{(dX)^2} \left| L\left( \tfrac12+\alpha+it+iu,\chi\right)\right|^4\frac{\d u}{|u|+\alpha}\left(\int_{-(dX)^2}^{(dX)^2}\frac{\d u}{|u|+\alpha}\right)^3+\frac1 {X^2 d^5} \\
	&\ll (\log (dX))^3 \sumstar\int_{-(dX)^2+t}^{(dX)^2+t} \left| L\left( \tfrac12+\alpha+iu,\chi\right)\right|^4\frac{\d u}{|u-t|+\alpha}+\frac1 {X^2 d^5}.
\end{aligned}
\end{equation}

To bound the last expression, we split the integral into dyadic ranges according to the size of $|u-t|$ and use an estimate for the fourth moment of Dirichlet $L$-functions (see for example \cite[Theorem 10.1]{Mo2}). This yields that the first term on the right-hand side of~\eqref{fourth moment holder and perron} is
\begin{equation*}
	\begin{aligned}
	&\ll (\log(dX))^4\sumstar \hspace{.2cm}\int\limits_{|u-t|\leq 2}\left| L\left(\tfrac12+\alpha+iu,\chi\right)\right|^4\d u\\
	&\hspace{1cm}+(\log(dX))^4 \max_{2 \leq T_0 \leq 2(dX)^2} \frac{1}{T_0}\sumstar\hspace{.2cm}\int\limits_{T_0\leq|u-t|\leq 2T_0}\left| L\left(\tfrac 12+\alpha+iu,\chi\right)\right|^4\d u\\
	&\ll (\log(dX))^8 \max_{2 \leq T_0 \leq 2(dX)^2} \frac{d(T_0+|t|)}{T_0} \ll d(1+|t|)(\log(dX))^8,
\end{aligned}
\end{equation*}
and so~\eqref{eq:4thclaimnotint} holds. This completes the proof of part (ii).
\end{proof}

Let us now return to proving Proposition~\ref{prop:charsumclaim}. We use the following lemma to split into two cases according to the sizes of $N_j$.
 Here, the idea is that if we can partition $ \{ 1,\dots,40\} $ into two subsets $I, J $ for which $\prod_{j \in I} N_j  ,\prod_{j \in J} N_j$ are of roughly similar size, then the mean value estimates on Dirichlet polynomials of Lemma~\ref{lemma orthogonality} will be sufficient. On the other hand, in the case when such a balanced decomposition is not possible, some $N_j$ must be relatively large (or the product of all $N_j$ must be small) and we can apply the fourth moment bound of Lemma~\ref{lemma fourth moment}.

\begin{lemma}\label{le:splitting}
Let $\varepsilon > 0$ be fixed, let $N \geq 2$ be sufficiently large, and let $N_1, \dotsc, N_{40} \geq \frac 12$ be such that
\begin{equation}
\label{eq:Niproductbound2}
N_1 \dotsm N_{40} \leq N^{\frac{15}8-\varepsilon}.
\end{equation}
Then at least one of the following holds.
\begin{enumerate}
\item[(a)] There exists a set $I\subseteq\{1,\dots,40\}$ such that 
\begin{equation}
\label{eq:Iprod}
N^{ \frac 34+\frac \eps {100}} < \prod_{j\in I}N_j\leq N^{\frac 98-\frac \eps {100}}.
\end{equation}
\item[(b)] There exist two distinct indices $
j_1,j_2 \in \{1, \dotsc, 40\}$ such that 
\begin{equation}
\label{eq:j1j2} 
\prod_{j \in\{1,\dotsc,40\}\setminus{\{j_1,j_2\}}}N_j\leq N^{\frac 98-\frac \eps {100}}.
\end{equation}
\end{enumerate}
\end{lemma}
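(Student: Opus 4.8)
The plan is to run a greedy argument on the sizes $N_j$. Reorder the indices so that $N_1 \geq N_2 \geq \dots \geq N_{40}$; this changes neither hypothesis nor conclusion, since \eqref{eq:Niproductbound2} and both alternatives (a), (b) are symmetric in $\{1,\dots,40\}$. Write $P_j := N_1\cdots N_j$ for the partial products (with $P_0 := 1$), and let $k$ be the smallest index in $\{1,\dots,40\}$ for which $P_k > N^{\frac34+\frac\eps{100}}$, if such an index exists.

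If no such $k$ exists, then $P_{40} = N_1\cdots N_{40} \leq N^{\frac34+\frac\eps{100}}$. Using $N_j \geq \tfrac12$ we get, for $N$ sufficiently large,
\[
\prod_{j\in\{1,\dots,40\}\setminus\{1,2\}} N_j = \frac{N_1\cdots N_{40}}{N_1 N_2} \leq 4\,N^{\frac34+\frac\eps{100}} \leq N^{\frac98-\frac\eps{100}},
\]
so (b) holds with $(j_1,j_2)=(1,2)$. Now suppose $k$ exists; then $P_{k-1}\leq N^{\frac34+\frac\eps{100}}$ by minimality, and we split according to the size of $N_k$. If $N_k \leq N^{\frac38-\frac\eps{50}}$, then
\[
N^{\frac34+\frac\eps{100}} < P_k = P_{k-1}N_k \leq N^{\frac34+\frac\eps{100}}\cdot N^{\frac38-\frac\eps{50}} = N^{\frac98-\frac\eps{100}},
\]
so $I=\{1,\dots,k\}$ satisfies \eqref{eq:Iprod} and (a) holds. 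The endpoints of the target interval in \eqref{eq:Iprod} are chosen precisely so that the last step of the greedy process cannot overshoot, i.e. so that $(\tfrac34+\tfrac\eps{100})+(\tfrac38-\tfrac\eps{50}) = \tfrac98-\tfrac\eps{100}$.

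It remains to treat the case $N_k > N^{\frac38-\frac\eps{50}}$, and this is the only place where the global bound \eqref{eq:Niproductbound2} is used. If $k=1$ then $N_1 > N^{\frac34+\frac\eps{100}}$, so (using $N_2\geq\tfrac12$) we have $N_1 N_2 > \tfrac12 N^{\frac34+\frac\eps{100}}$; if $k\geq 2$ then, since the $N_j$ are nonincreasing, $N_1 N_2 \geq N_k^2 > N^{\frac34-\frac\eps{25}}$. In either case $N_1 N_2 > \tfrac12 N^{\frac34-\frac\eps{25}}$, and therefore, for $N$ sufficiently large,
\[
\prod_{j\in\{1,\dots,40\}\setminus\{1,2\}} N_j = \frac{N_1\cdots N_{40}}{N_1 N_2} < 2\,N^{\frac{15}8-\eps-(\frac34-\frac\eps{25})} = 2\,N^{\frac98-\frac{24\eps}{25}} \leq N^{\frac98-\frac\eps{100}},
\]
so (b) holds with $(j_1,j_2)=(1,2)$. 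This exhausts all cases.

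The argument is entirely elementary; there is no genuine obstacle. The only points demanding care are the bookkeeping of exponents (the values $\tfrac34+\tfrac\eps{100}$, $\tfrac98-\tfrac\eps{100}$, $\tfrac38-\tfrac\eps{50}$ are linked so that the ``overshoot'' case is impossible) and checking that the factors of $2$ and $4$ arising from the bound $N_j\geq\tfrac12$ are absorbed once $N$ is large in terms of $\eps$; the hypothesis that $N$ be sufficiently large is exactly what makes these final inequalities go through.
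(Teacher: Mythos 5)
Your argument is correct, and it is essentially the same as the paper's: both proofs hinge on choosing a maximal collection of indices whose product stays below $N^{3/4+\eps/100}$ and then observing that either one can extend it into the window \eqref{eq:Iprod}, or the leftover indices are individually so large that the complementary product is forced below $N^{9/8-\eps/100}$ via the global bound \eqref{eq:Niproductbound2}. The only cosmetic difference is that you sort and argue directly (identifying $j_1=1$, $j_2=2$ explicitly), whereas the paper phrases the same dichotomy as a proof by contradiction over an arbitrary maximal subset.
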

\begin{proof}Assume for contradiction that neither (a) nor (b) holds. Consider subsets $I' \subseteq \{1,\dots,40\}$ such that $$P:=\prod_{j\in I'}N_j \leq N^{\frac 34+\frac \eps {100}}.$$ Let $I\subseteq\{1,\dots,40\}$ be one of such subsets containing the maximal number of elements, and let $J=\{1,\dots,40\}\setminus I$. 
We must have $|J|\geq 3$, as otherwise (b) holds for any $\{j_1, j_2\} \supseteq J$. Furthermore, for every $j\in J$, we must have $PN_j>  N^{\frac 98-\frac \eps {100}}$, as otherwise either we obtain a contradiction with the maximality of $I$ or (a) holds. 
Thus, for every $j\in J$, we have $N_j> N^{\frac 98-\frac \eps{ 100}}/P$. However, using also the definition of $P$, we have
\[
  \prod_{j = 1}^{40} N_j> P \left(\frac{N^{\frac 98-\frac \eps {100}}}{P}\right)^3 = \frac{N^{\frac{27} 8-\frac {3\eps} {100}}}{P^2} \geq \frac{N^{\frac{27} 8-\frac {3\eps} {100}}}{N^{\frac 32+\frac \eps {50}}} = N^{\frac{15} 8-\frac \eps {20}}
\]
which contradicts~\eqref{eq:Niproductbound2}.
\end{proof}

Now, we are ready to prove Proposition~\ref{prop:charsumclaim}.

\begin{proof}[Proof of Proposition~\ref{prop:charsumclaim}]Notice first that the right-hand side of~\eqref{eq:charsumclaim} satisfies
  \begin{equation}\label{eq:TailorMaass}
  \frac{(N+d)^2N^{ \frac 1{16}}}{d} \gg \frac{(d^2 + d^{\frac 32} N^{\frac 12} + dN) N^{\frac 1{16}}}{d} = dN^{\frac 1{16}} + d^{\frac 12}N^{\frac 9{16}} + N^{\frac {17}{16}}.
\end{equation}
Lemma~\ref{le:splitting} allows us to split into two cases. First, we assume that Lemma~\ref{le:splitting}(a) holds. Let $I$ be as in Lemma~\ref{le:splitting}(a) and let $J = \{1, \dotsc, 40\} \setminus I$.
Note that in this case
\begin{equation}
\label{eq:Jprod}
\prod_{j \in J} N_j \leq \frac{N^{\frac{15} 8-\varepsilon}}{N^{\frac 34+\frac \eps {100}}} \leq N^{\frac 98-\frac \eps {100}}.\end{equation}
Using the Cauchy--Schwarz inequality and Lemma \ref{lemma orthogonality}, we obtain
$$\begin{aligned}
	&\int_{-\infty}^\infty\sumstar\left|\prod_{j\leq 40}\sum_{n_j\sim N_j}\frac{a_j(n_j)\chi(n_j)}{ n_{j}^{\frac 12+it}}\right|\frac{\d t}{t^2+1}\\
	&\ll\int_{-\infty}^{\infty}\left(\sumstar\left|\prod_{j\in I}\sum_{n_j\sim N_j}\frac{a_j(n_j)\chi(n_j)}{n_j^{\frac 12+it}}\right|^2\right)^{\frac 12}\left(\sumstar\left|\prod_{j \in J}\sum_{n_j\sim N_j}\frac{a_j(n_j)\chi(n_j)}{n_j^{\frac 12+it}}\right|^2\right)^{\frac 12}\frac{\d t}{t^2+1}\\
	&\ll \bigg( d+\prod_{j\in I} N_j\bigg)^{\frac 12}\bigg( d+\prod_{j\in J} N_j\bigg)^{\frac 12} (\log(dN))^{O(1)}\\
	&\ll \Bigg( d+d^{\frac 12}\prod_{j\in I}N_j^{\frac 12}+d^{\frac 12}\prod_{j\in J}N_j^{\frac 12}+ \prod_{j =1}^{40} N_j^{\frac 12}\Bigg) (\log(dN))^{O(1)}.
	\end{aligned}
	$$
Now~\eqref{eq:Niproductbound},~\eqref{eq:Iprod}, and~\eqref{eq:Jprod} imply that this is 
\[
\ll (d + d^{\frac 12}N^{\frac 9{16}-\frac \eps {200}}+N^{\frac {15} {16}-\frac \eps 2}) (\log(dN))^{O(1)},
\]
and by~\eqref{eq:TailorMaass} we see that~\eqref{eq:charsumclaim} holds in this case.

Next, we assume that Lemma~\ref{le:splitting}(b) holds and let $j_1,j_2$ be the associated indices. Using H\"older's inequality, we obtain
\begin{align}
\label{eq:Caseb}
\begin{aligned}
	&\int_{-\infty}^{\infty}\sumstar\left|\prod_{j\leq 40}\sum_{n_j\sim N_j}\frac{a_j(n_j)\chi(n_j)}{n_j^{\frac 12+it}}\right|\frac{\d t}{t^2+1}\\
	&\ll\left(\int_{-\infty}^{\infty}\sumstar\left|\prod_{\substack{j \leq 40 \\ j\notin\{j_1,j_2\}}}\sum_{n_j\sim N_j}\frac{a_j(n_j)\chi(n_j)}{n_j^{ \frac 12+it}}\right|^2\frac{\d t}{t^2+1}\right)^{\frac 12}\\
	&\times\prod_{j\in\{j_1,j_2\}}\left(\int_{-\infty}^{\infty}\sumstar\left|\sum_{n_{j}\sim N_j}\frac{a_j(n_j)\chi(n_j)}{n_j^{\frac 12+it}}\right|^4\frac{\d t}{t^2+1}\right)^{\frac 14}.
\end{aligned}
\end{align}
Now, for $\ell = 1, 2$, either $N_{j_\ell} \leq N^{1/10}$ or $a_{j_\ell}(n)$ is $1$ or $\log n_{j_\ell}$. Applying Lemma~\ref{lemma fourth moment} to the fourth moments on the right-hand side of~\eqref{eq:Caseb} and Lemma~\ref{lemma orthogonality} to the second moment and using~\eqref{eq:j1j2}, we obtain that
$$
\begin{aligned}
	&\int_{-\infty}^{\infty}\sumstar\left|\prod_{j=1}^{40}\sum_{n_j\sim N_j}\frac{a_j(n_j)\chi(n_j)}{n_j^{\frac 12+it}}\right|\frac{\d t}{t^2+1}\\
	&\ll\bigg( d+\prod_{j\notin\{j_1,j_2\}} N_j\bigg)^{\frac 12} (d+N^{\frac15})^{\frac 12}(\log(dN))^{O(1)} \\
	&\ll \big(d+d^{\frac 12}N^{\frac 1 {10}}+d^{\frac 12}N^{\frac 9 {16}-\frac \eps {200}} + N^{\frac 9 {16}-\frac \eps {200}+\frac 1 {10}}\big)(\log(dN))^{O(1)}.
\end{aligned}$$
Again we see from~\eqref{eq:TailorMaass} that~\eqref{eq:charsumclaim} holds also in this case.
\end{proof}

\begin{remark}
  The support $(-\frac {15} 8, \frac {15} 8)$ is the limit of this technique. Indeed, extending the support to $(-\frac {15} 8-\varepsilon, \frac {15} 8+\varepsilon)$ would require us to have a variant of Proposition~\ref{prop:charsumclaim} with $N_1 \dotsm N_{40} \leq N^{ \frac{15} 8+\varepsilon}$ and with $N^{\frac 1{16}}$ on the right-hand side of~\eqref{eq:charsumclaim} replaced by $N^{\frac 1{16}-\frac{\varepsilon} 2}$. In particular, we would need to handle the case when $c = d = N,$ $N_1 = \dotsb = N_5 = N^{\frac 38+\frac \eps 5}$, and $N_j = \frac 1 2$ for $j \geq 6$. If we tried to deal with this similarly to case (a) of Lemma~\ref{le:splitting}, we would obtain the bound $d^{\frac 12} N^{ \frac 9{16}+\frac{\varepsilon}{10}}$ which would not be sufficient. The more precise bottleneck seems to be that in this case we cannot get a sufficiently good upper bound for the number of characters modulo $N$ such that
  \begin{equation}\label{eq:worstcase}
\left|    \sum_{n \sim N_1} \frac{\chi(n)}{n^{\frac 12}}\right| \geq N_1^{\frac 16}.
    \end{equation}
By considering the fourth moment of the last expression and ignoring the $t$-aspect in Lemma~\ref{lemma fourth moment}(ii), we can expect that the number of such $\chi$ is at most $N \cdot N_1^{-4 \cdot \frac 16}$, so the contribution of such characters to the left-hand side of~\eqref{eq:charsumclaim} can, as far as we know, be essentially as large as $N N_1^{-4 \cdot \frac 16} N_1^{5 \cdot \frac 16} = N N_1^{\frac 16} \asymp N^{1+\frac 1 {16}+\frac {\varepsilon}{30}}$, which is not acceptable.
  \end{remark}

\begin{remark}\label{remark HB beats zeros}
One might wonder why it is better to use Heath-Brown's identity than zero-density results since these two approaches often lead to the same result as both are based on the same mean and large value results for Dirichlet polynomials. This is the case for instance for obtaining an asymptotic formula for the number of primes in short intervals $(X, X+H]$, previously with $H = X^{\frac 7 {12}+\varepsilon}$ (see e.g.~\cite[Sections 7.1--7.3]{Ha}), and now, thanks to the recent work of Guth and Maynard~\cite{GM} (see in particular their Corollary 1.3 and Remark in the end of Section 13.2), for $H = X^{ \frac{17}{30}+\varepsilon}$.

Let us discuss for instance Huxley's prime number theorem giving primes in intervals of length $H = X^{ \frac 7{12}+\varepsilon}$ and his zero-density estimate $N(\sigma, T) \ll T^{\frac{12}{5}(1-\sigma)} (\log T)^{O(1)}$ (for these, see e.g.~\cite[Chapter 10]{IK}). In the proof of the zero-density estimate (see e.g.~\cite[Chapter 10]{IK}), the first step is to show that there exists a zero-detecting polynomial for which mean value and large value results for Dirichlet polynomials are subsequently applied.  Studying the proof, it turns out that the zero-density estimate can be improved unless the length of the zero-detecting polynomial is very close to $T^{ \frac 25}$ (and $\sigma \approx \frac 34$). In Huxley's case $T \approx X/H \approx X^{\frac 5{12}}$ and so $T^{ \frac 25} \approx X^{ \frac 16}$. Now one encounters this very same worst case scenario when Heath-Brown's identity is applied, in the case when there are six polynomials of length $X^{ \frac 16}$; see~\cite[Section 4]{HB}.

In our case, in Section~\ref{sec:Primesum} we have $N_1 \dotsm N_k < N^{\frac {15} 8-\varepsilon}$, and thus Heath-Brown's identity can lead to a case where all $N_i \approx N^{\frac 25}$ only if $N_1 \dotsm N_k \lesssim N^{\frac 85}$, which is easy to handle. Therefore the worst-case scenario from the zero-density estimate is not occurring, and we can obtain an improvement by applying Heath-Brown's identity directly.

Let us finally point out that the recent breakthrough of Guth and Maynard~\cite{GM} seems not to be directly relevant in our case for two reasons. First, it does not currently provide an improvement in the $\chi$-aspect. Second, our most problematic character sums are so small (see~\eqref{eq:worstcase}) that even the most natural $\chi$-analogue would not be useful --- note that~\cite[Theorem 1.1]{GM} does not improve upon previous results for the number of large values $|\sum_{n \sim N_1} b_n n^{it}| \geq V_1$ when $V_1 \leq N_1^{ \frac 7{10}}$, and~\eqref{eq:worstcase} corresponds to $V_1 = N_1^{ \frac 12+\frac 16} < N_1^{\frac 7{10}}$.
\end{remark}

\section{Proof of Theorem \ref{theorem main holom}} \label{sec:holom}
In this section we use the notation and set-up from Section~\ref{ssec:introholom}. Recall in particular that $X =k^2N$ is the size of the analytic conductor of $L(s,f)$ and $$\Theta_k = 2-\frac{1}{5k-2}$$ for all even integers $k\geq2$. By \cite[Lemma 3.1 and the proof of Lemma 3.2]{DFS2} we have the following variant of Proposition~\ref{Prop truncation}.

\begin{proposition} \label{Prop truncation holo} 
Let $k \geq 2$ be a fixed even integer, 
let $N$ be a prime and let $\phi$ be an even Schwartz function for which  ${\rm supp}(\widehat \phi) \subset (-2, 2)$. Then we have the estimate
\begin{multline*}
\mathcal D_{k, N}^*(\phi;X) = \widehat{\phi}(0) +\frac{\phi(0)}{2} \\ +O_k \Bigg(   \frac{1}{\log X} \sum_{ \substack{ c\equiv 0 \bmod N  \\ c< N^{1+\frac{1}{2k-3}}}}\frac 1{c\varphi(c)} \sum_{ \substack{ d\mid c \\ d\neq 1}} d  \underset{\chi \bmod d}{{\sum}^*}  \Bigg| \sum_{n} \frac{\Lambda(n) \chi(n)}{n^{\frac 12}} \widehat \phi \Big( \frac{\log n}{\log X} \Big)J_{k-1}\Big(\frac{4\pi n^{\frac 1 2}}{c}\Big) \Bigg| +\frac 1{\log X}  \Bigg).
\end{multline*}
\end{proposition}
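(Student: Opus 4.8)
The plan is to carry out, in the holomorphic setting, the exact analogue of the chain Lemma~\ref{Lemma Explicit Formula}~$\to$~Lemma~\ref{corollary kim sarnak}~$\to$~Lemma~\ref{lemma apply Kuznetsov}~$\to$~Proposition~\ref{Prop truncation}, following \cite{DFS2}; the one structural simplification is that the Petersson trace formula, unlike the Kuznetsov formula, carries no Eisenstein term. The starting point is the explicit formula for a holomorphic newform $f\in B_k^\ast(N)$: arguing as in Lemma~\ref{Lemma Explicit Formula} but with the spectral parameter $\pm it_f$ in~\eqref{align:FE} replaced by $\pm\tfrac{k-1}2$ (so that Stirling makes the archimedean term $O_k(1/\log X)$), one obtains
\[
\sum_{\gamma_f}\phi\Big(\gamma_f\tfrac{\log X}{2\pi}\Big)=\widehat\phi(0)+\tfrac{\phi(0)}2-2\sum_{p,\nu}\frac{\lambda_f(p^{\nu})}{p^{\nu/2}}\widehat\phi\Big(\tfrac{\nu\log p}{\log X}\Big)\tfrac{\log p}{\log X}+O_k\Big(\tfrac1{\log X}\Big),
\]
where the $\tfrac{\phi(0)}2$ again comes from the $\nu=2$ part of the Hecke relation and the $p=N$ terms are negligible since $|\lambda_f(N)|=N^{-1/2}$. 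Weighting by $\omega_{f,k}(N)$, summing over $f\in B_k^\ast(N)$ and dividing by $\Omega_k(N)$ then reduces matters to estimating a weighted prime-power sum of $\Delta_{k,N}^\ast(p^{\nu},1):=\sum_{f\in B_k^\ast(N)}\omega_{f,k}(N)\lambda_f(p^{\nu})$.

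I would next pass to a full orthogonal basis of $S_k(N)$. Since $N$ is prime the old part comes only from level $1$, so the Gram--Schmidt argument of \cite[Proposition~2.6]{ILS} (the analogue of Lemma~\ref{corollary kim sarnak}) expresses $\Delta_{k,N}^\ast(p^{\nu},1)$ through $\Delta_{k,N}(p^{\nu},1)$ and a level-$1$ term weighted by $O(1/N)$, the latter absorbed into the error exactly as in the proof of Lemma~\ref{lemma apply Kuznetsov}. The Petersson formula then gives $\Delta_{k,N}(p^{\nu},1)=\delta_{p^{\nu},1}+2\pi i^{-k}\sum_{c\equiv0\bmod N}c^{-1}S(p^{\nu},1;c)J_{k-1}(4\pi p^{\nu/2}/c)$, and since $p^{\nu}>1$ the diagonal vanishes; thus the one-level density equals $\widehat\phi(0)+\tfrac{\phi(0)}2+O_k(1/\log X)$ plus a weighted sum of Kloosterman sums over $c\equiv0\bmod N$.

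The crucial step — and the one I expect to be the main obstacle — is truncating this $c$-sum at $c<N^{1+\frac1{2k-3}}$. I would do this \emph{before} opening the Kloosterman sums into Dirichlet characters, so as to keep the full Weil bound $|S(p^{\nu},1;c)|\leq\tau(c)c^{1/2}$ available; the much weaker bound $|S(p^{\nu},1;c)|\ll c$ implicit in the character-sum form is not enough when $\sigma$ is close to $2$. For $c\geq N^{1+\frac1{2k-3}}$ the support condition $p^{\nu}\leq X^{\sigma}$ (with $\sigma<2$ fixed and $X=k^2N$) forces $4\pi p^{\nu/2}/c$ to be bounded, so $J_{k-1}(4\pi p^{\nu/2}/c)\ll_k(p^{\nu/2}/c)^{k-1}$. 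Combining this with the Weil bound, the estimate $\sum_{n\leq Y}\Lambda(n)n^{(k-2)/2}\ll Y^{k/2}$ and the divisor-sum bound $\sum_{c\geq C,\,N\mid c}\tau(c)c^{-k+1/2}\ll N^{-1}C^{3/2-k}\log N$, the tail is $\ll_k\tfrac{\log N}{\log X}\,N^{\frac{\sigma k}2-1}C^{3/2-k}$; for $C=N^{1+\frac1{2k-3}}=N^{(2k-2)/(2k-3)}$ the resulting exponent of $N$ is $\tfrac{\sigma k}2-1+(\tfrac32-k)\tfrac{2k-2}{2k-3}$, which equals $0$ exactly at $\sigma=2$ and is strictly negative for every $\sigma<2$, so the tail is $\ll_k N^{-\delta(k,\sigma)}$. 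This is precisely what forces the exponent $1+\frac1{2k-3}$, and the argument is tightest for $k=2$, where only $J_1(y)\ll y$ is available and the truncation window is $c<N^2$, matching the Maass case of Proposition~\ref{Prop truncation}.

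Finally, on the retained range $c<N^{1+\frac1{2k-3}}$ I would open each $S(p^{\nu},1;c)$ by orthogonality of characters, as in \cite[Proof of Lemma~3.1]{DFS2}, into a sum over $\chi\bmod c$ of $\chi(p^{\nu})$ against squares of Gauss sums, decompose $\chi$ into the primitive character modulo its conductor $d\mid c$ (the identity $|\tau(\chi^\ast)|^2=d$ producing the weight $d$), bound trivially the negligible contribution of primes $p\mid c$, and repackage the prime-power sum using $\Lambda(n)$. This yields exactly the stated $O_k$-term, and together with the main term $\widehat\phi(0)+\tfrac{\phi(0)}2$ completes the proof.
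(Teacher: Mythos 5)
Your reconstruction is correct and follows exactly the route the paper delegates to \cite[Lemma 3.1 and proof of Lemma 3.2]{DFS2}; the paper's own ``proof'' of Proposition~\ref{Prop truncation holo} is just that citation. In particular your truncation calculation is exact: with $X\asymp_k N$, the Weil bound and $J_{k-1}(x)\ll_k x^{k-1}$ give a tail $\ll_k X^{\sigma k/2}\sum_{c\geq C,\,N\mid c}\tau(c)c^{-(k-\frac12)}\ll_k N^{k(\frac\sigma2-1)+o(1)}$ at $C=N^{(2k-2)/(2k-3)}$, using the identity $\bigl(\tfrac32-k\bigr)\tfrac{2k-2}{2k-3}=-(k-1)$, which vanishes precisely for $\sigma<2$ and reduces to the Maass window $c<N^2$ at $k=2$.
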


This time we need a variant of Lemma~\ref{lemma:bound Psi} which gives a bound for the relevant Mellin transform
\begin{equation}
\label{equation definition Psi holom}
\Psi^\flat_{\phi,X,c,k}(s) := \frac{1}{\log X} \int_0^{\infty} x^{s-1}\widehat \phi \Big( \frac{\log x}{\log X} \Big)  J_{k-1}(4\pi \sqrt{x}/c) \d x.
\end{equation}
Such a bound is provided by~\cite[Lemma 3.3]{DFS2} which we now state.

\begin{lemma}\label{lemma:bound Psi holom2}
Let $\sigma \in (0, 2)$ be fixed. Let $k \geq 2$ be a fixed even integer, let $N$ be a prime, and let $c \geq N$. Let $\phi$ be an even Schwarz function supported in $(-\sigma,\sigma)$. The function $\Psi^\flat_{\phi,X,c,k}(s)$ defined in~\eqref{equation definition Psi holom} is entire and satisfies the bound
	\begin{align*}
		\Psi^\flat_{\phi,X,c,k}(s) \ll_k \frac{X^{\sigma \lvert\Re(s) + \frac{k-1}{2} \rvert}}{(\lvert s \rvert +1)^2 c^{k-1}}.
	\end{align*}
      \end{lemma}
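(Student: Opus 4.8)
The plan is to follow the proof of Lemma~\ref{lemma:bound Psi} essentially verbatim, replacing the single power bound $H^+(x)\ll x$ — which is precisely the $k=2$ case, the Taylor expansion of $H^+$ beginning at $x^1$ — by the bound $J_{k-1}(y)\ll_k y^{k-1}$ valid for $0<y\ll_k 1$; the latter is immediate from the power series of the $J$-Bessel function (cf.\ \cite[2.11~(1)]{Wa}). First I would note that, since $\widehat\phi$ is supported in the bounded interval $(-\sigma,\sigma)\subset(-2,2)$, the substitution $u=\log x/\log X$ gives
\[
\Psi^\flat_{\phi,X,c,k}(s)=\int_{-\sigma}^{\sigma} X^{us}\,\widehat\phi(u)\,J_{k-1}\Bigl(\tfrac{4\pi X^{u/2}}{c}\Bigr)\,\d u,
\]
an integral over a fixed compact interval of $X^{us}$ (entire in $s$) against a continuous function independent of $s$ ($k$ being a fixed integer, $J_{k-1}$ is entire), so that $\Psi^\flat_{\phi,X,c,k}$ is entire, e.g.\ by Morera's theorem.

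Next I would record the size estimates needed. Since $c\geq N$, $X=k^2N$ and $\sigma<2$, for $u\in[-\sigma,\sigma]$ we have $4\pi X^{u/2}/c\leq 4\pi k^{\sigma}N^{\sigma/2-1}\ll_k 1$, so the Bessel argument stays in a bounded range; there, the power series of $J_{k-1}$ together with the recurrences $2J_{k-1}'(y)=J_{k-2}(y)-J_k(y)$ and $4J_{k-1}''(y)=J_{k-3}(y)-2J_{k-1}(y)+J_{k+1}(y)$ (cf.\ \cite[3.2~(2)]{Wa}) give $J_{k-1}(y)\ll_k y^{k-1}$, $J_{k-1}'(y)\ll_k y^{k-2}$, and $J_{k-1}''(y)\ll_k y^{k-3}$ for $k\geq 3$, while $J_{k-1}'(y),J_{k-1}''(y)\ll 1$ for $k=2$. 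The case $|s|\leq 1$ then follows directly from $|\Psi^\flat_{\phi,X,c,k}(s)|\ll_k \int_{-\sigma}^{\sigma} X^{u\Re(s)}(X^{u/2}/c)^{k-1}\,\d u\ll_k X^{\sigma|\Re(s)+(k-1)/2|}/c^{k-1}$ and $(|s|+1)^2\asymp 1$.

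For $|s|>1$ I would integrate by parts twice in the $u$-integral, exactly as in the proof of Lemma~\ref{lemma:bound Psi}: each integration contributes a factor $(s\log X)^{-1}$, and by the chain rule each time a derivative lands on $J_{k-1}(4\pi X^{u/2}/c)$ it produces a factor $\tfrac{2\pi}{c}X^{u/2}(\log X)$ times a derivative of $J_{k-1}$. The crucial point is that this extra factor $X^{u/2}/c$ exactly compensates the loss of one power of $y$ per derivative of $J_{k-1}$: using the estimates above together with $X^{u/2}/c\ll_k 1$ to absorb any slack (this handles in particular the small-$k$ edge cases, e.g.\ $k=2$, where $J_1'$ and $J_1''$ do not vanish to high order at the origin), each of the four resulting terms is $\ll_k \tfrac{1}{|s|^2}X^{u\Re(s)}(X^{u/2}/c)^{k-1}$, with no positive power of $\log X$ surviving. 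Integrating over $u\in[-\sigma,\sigma]$ gives $\Psi^\flat_{\phi,X,c,k}(s)\ll_k X^{\sigma|\Re(s)+(k-1)/2|}/(|s|^2 c^{k-1})$, which is the claim since $|s|^2\asymp(|s|+1)^2$ for $|s|>1$.

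I do not expect a serious obstacle: the argument is that of Lemma~\ref{lemma:bound Psi}, and, in contrast to the transform $H^+$, the fixed-order Bessel function $J_{k-1}$ is genuinely entire and bounded on bounded sets, so no contour shifting is involved. The only point requiring care is the bookkeeping after the two integrations by parts — one must check that in every term the powers of $X^{u/2}/c$ coming from differentiating $J_{k-1}$ combine with its derivative bounds to yield exactly $(X^{u/2}/c)^{k-1}$, invoking $X^{u/2}/c\ll_k 1$ whenever the naive count is off; this is what pins down the $c^{-(k-1)}$ factor and the exponent $\bigl|\Re(s)+\tfrac{k-1}{2}\bigr|$ in the final bound.
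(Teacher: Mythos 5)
Your proof is correct and takes essentially the argument one would expect: the paper itself does not prove Lemma~\ref{lemma:bound Psi holom2} but simply cites it from~\cite[Lemma 3.3]{DFS2}, and your write-up is the natural holomorphic analogue of the paper's own proof of Lemma~\ref{lemma:bound Psi}. The bookkeeping after two integrations by parts is right: writing $y=4\pi X^{u/2}/c$ so that $y'(u)\asymp (\log X)\,X^{u/2}/c$ and $y''(u)\asymp(\log X)^2\,X^{u/2}/c$, each of the four terms in $\frac{\d^2}{\d u^2}\bigl(\widehat\phi(u)J_{k-1}(y(u))\bigr)$ is $\ll_k (\log X)^2 (X^{u/2}/c)^{k-1}$ once one uses $J_{k-1}^{(j)}(y)\ll_k y^{k-1-j}$ for $j\le\min(2,k-1)$ together with $X^{u/2}/c\ll_k 1$ to absorb the slack for small $k$; the $(\log X)^2$ then cancels against $(s\log X)^{-2}$, and the final $u$-integration yields exactly $X^{\sigma|\Re(s)+\frac{k-1}{2}|}/(|s|^2 c^{k-1})$. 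The case $|s|\le 1$ and the entirety via compact support of $\widehat\phi$ are handled as in Lemma~\ref{lemma:bound Psi}.
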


Arguing as in Section~\ref{sec:Primesum} but replacing Proposition~\ref{Prop truncation} by Proposition~\ref{Prop truncation holo} and Lemma~\ref{lemma:bound Psi} by Lemma~\ref{lemma:bound Psi holom2}, we see that Theorem~\ref{theorem main holom} follows once we have shown the following variant of Proposition~\ref{prop:charsumclaim}. 

\begin{proposition}\label{prop:charsumclaim holom}
Let $\varepsilon > 0$ be fixed. Let $k \geq 2$ be an even integer and let $\Theta_k$ be as above. Let $d \geq 2$ be an integer, let $N \geq 2$ and $N_1, \dotsc, N_{40} \geq \frac 12$. For each $j = 1, \dotsc, 40$, let $a_j(n)$ be  $1$, $\mu(n)$ or $\log(n)$, and assume that $a_j(n)$ is $1$ or $\log(n)$ for every $j$ for which $N_j>N^{\frac 1{10}}$. Assume also that
\begin{equation*}
N_1 \dotsm N_{40} \leq N^{ \Theta_k-\varepsilon}.
\end{equation*}
Then
\begin{equation}
\label{eq:charsumclaim holom}
\int_{-\infty}^\infty\sumstar\left|\prod_{j\leq 40}\sum_{n_j\sim N_j}\frac{a_j(n_j)\chi(n_j)}{ n_{j}^{ \frac 12+it}}\right|\frac{\d t}{t^2+1}\ll_k \frac{(N+d)^k N^{1-\frac{k-1}{2}\Theta_k}}{d} (\log(dN))^{O(1)}.
\end{equation}
\end{proposition}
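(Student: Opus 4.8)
The plan is to follow the same two-case strategy used in the proof of Proposition~\ref{prop:charsumclaim}, adapting the arithmetic to the parameter $\Theta_k = 2 - \tfrac{1}{5k-2}$ and the new target bound. First I would record, as in~\eqref{eq:TailorMaass}, that since $(N+d)^k \gg d^k + d^{k-1}N^{1/2}\cdot(\text{cross terms}) + N^{k/2}$ one has
\[
\frac{(N+d)^k N^{1-\frac{k-1}{2}\Theta_k}}{d} \gg d^{k-1} N^{1-\frac{k-1}{2}\Theta_k} + \dotsb + N^{\frac k2}N^{1-\frac{k-1}{2}\Theta_k} = N^{\frac k2 + 1 - \frac{k-1}{2}\Theta_k},
\]
and in particular the right-hand side dominates both $d^{k-1}N^{1-\frac{k-1}{2}\Theta_k}$ (i.e.\ the ``all of $d$'' term) and $N^{1+\frac12(k/ (5k-2)) \cdot \text{stuff}}$; the precise shape of the threshold $\Theta_k$ is engineered so that $\tfrac k2 + 1 - \tfrac{k-1}{2}\Theta_k$ is exactly what the critical case produces. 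I would then invoke a version of Lemma~\ref{le:splitting} with $N^{15/8}$ replaced by $N^{\Theta_k}$: the same pigeonhole argument partitions $\{1,\dots,40\}$ into a set $I$ with $N^{\Theta_k/2 \cdot \alpha} < \prod_{j\in I}N_j \le N^{\Theta_k/2 \cdot \beta}$ for suitable $\alpha<\beta$ (this is case (a)), or else produces two indices $j_1,j_2$ with $\prod_{j\notin\{j_1,j_2\}}N_j$ small (case (b)); the numerics change but the contradiction argument — three leftover variables each forced to be large, multiplying out to exceed $N^{\Theta_k-\varepsilon}$ — is identical.

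In case (a) I would apply Cauchy--Schwarz to split the $\chi$-sum into the $I$-block and the $J$-block, then Lemma~\ref{lemma orthogonality} to each block, getting $\big(d + \prod_{j\in I}N_j\big)^{1/2}\big(d + \prod_{j\in J}N_j\big)^{1/2}(\log dN)^{O(1)}$, which expands into $d + d^{1/2}\prod_I N_j^{1/2} + d^{1/2}\prod_J N_j^{1/2} + \prod_{j=1}^{40}N_j^{1/2}$ times logs. Feeding in the balanced bounds on $\prod_I N_j$ and $\prod_J N_j$ (each $\le N^{\Theta_k/2 + \text{small}}$ roughly) and $\prod N_j \le N^{\Theta_k - \varepsilon}$ yields something like $d + d^{1/2}N^{\Theta_k/2 - \text{small}} + N^{\Theta_k/2 - \varepsilon/2}$, and the task is to check each term is $\ll$ the right-hand side of~\eqref{eq:charsumclaim holom}. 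In case (b) I would use Hölder with exponents $(2,4,4)$: a second-moment factor for the product over $j\notin\{j_1,j_2\}$, bounded by Lemma~\ref{lemma orthogonality} using~\eqref{eq:j1j2}, times two fourth-moment factors for the $j_1,j_2$ blocks, bounded by Lemma~\ref{lemma fourth moment}(i) or (ii) depending on whether $N_{j_\ell} \le N^{1/10}$ or $a_{j_\ell}$ is $1$ or $\log$. This gives $\big(d + \prod_{j\notin\{j_1,j_2\}}N_j\big)^{1/2}\big(d + N^{1/5}\big)^{1/2}(\log dN)^{O(1)}$, which again expands into four terms to be checked against the target.

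The main obstacle — and the reason the support equals $\Theta_k$ and not more — is verifying that in every subcase the resulting power of $N$ (and $d$) genuinely sits below $N^{\frac k2 + 1 - \frac{k-1}{2}\Theta_k}\cdot d^{-1}\cdot(N+d)^{k}\cdot d^{-1}$; said differently, the delicate point is that the worst subcase of (a) produces exactly $d^{1/2}N^{\Theta_k/2 - \varepsilon/200}$-type terms and one must confirm $d^{1/2}N^{\Theta_k/2}\le d^{-1}(N+d)^k N^{1-\frac{k-1}{2}\Theta_k}(\log)^{O(1)}$ for the full range $2\le d$, $N\ge 2$, which is precisely where the exponent $\Theta_k = 2 - \tfrac{1}{5k-2}$ is forced. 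Once the bookkeeping of these exponent inequalities is done (it is routine but must be done carefully, tracking the $k$-dependence), one then unwinds the reductions exactly as in Section~\ref{sec:Primesum}: Proposition~\ref{prop:charsumclaim holom} $\Rightarrow$ the analogue of~\eqref{eq:afterH-B} with $J_{k-1}$ in place of $H^+$ (using Lemma~\ref{lemma:bound Psi holom2} to supply the factor $X^{\sigma|\cdot|}/((|s|+1)^2 c^{k-1})$ after Mellin inversion) $\Rightarrow$ Proposition~\ref{Prop truncation holo} $\Rightarrow$ Theorem~\ref{theorem main holom}. I do not expect the Bessel-function input to cause trouble since Lemma~\ref{lemma:bound Psi holom2} is quoted wholesale from~\cite{DFS2} and plays exactly the structural role that Lemma~\ref{lemma:bound Psi} played in the Maass case.
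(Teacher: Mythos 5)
Your overall strategy is the same as the paper's: carry out the same two-case split (a balanced partition $I,J$ vs.\ two ``large'' indices $j_1,j_2$), apply Cauchy--Schwarz and the orthogonality estimate (Lemma~\ref{lemma orthogonality}) in the balanced case, apply H\"older with exponents $(2,4,4)$ together with Lemma~\ref{lemma fourth moment} in the unbalanced case, and compare each resulting term to a lower bound on the right-hand side of~\eqref{eq:charsumclaim holom}. This is exactly the route the paper takes via Lemma~\ref{le:splitting gen} and the computation~\eqref{align:tailoring}.

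There is, however, a genuine numerical flaw in the tailoring step that would sink the argument for $k\ge 4$ if left as written. You claim
\[
\frac{(N+d)^k N^{1-\frac{k-1}{2}\Theta_k}}{d} \gg N^{\frac{k}{2}+1-\frac{k-1}{2}\Theta_k},
\]
arriving at $N^{k/2}$ as a lower bound for $(N+d)^k/d$. But the worst term to beat in case (a) is $\prod_j N_j^{1/2}\le N^{\Theta_k/2}$, and the inequality $N^{\Theta_k/2}\le N^{k/2+1-\frac{k-1}{2}\Theta_k}$ is equivalent to $\Theta_k\le 1+\tfrac{2}{k}$, which is false for $k\ge 4$ (e.g.\ $\Theta_4\approx 1.944>1.5$). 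The paper instead extracts the binomial terms $d^2N^{k-2}$, $d^{3/2}N^{k-3/2}$ and $dN^{k-1}$ from $(N+d)^k$, obtaining (after division by $d$ and multiplication by $N^{1-\frac{\Theta_k}{2}(k-1)}$) the three lower bounds
\[
dN^{(k-1)(1-\frac{\Theta_k}{2})},\qquad d^{\frac 12}N^{k-\frac12-\frac{\Theta_k}{2}(k-1)},\qquad N^{k-\frac{\Theta_k}{2}(k-1)},
\]
and the third of these is what actually beats $N^{\Theta_k/2}$ (that check reduces to $\Theta_k\le 2$). Relatedly, your splitting thresholds are left as unspecified $\alpha<\beta$; they must be chosen so that (i) the unbalanced-case contradiction produces the exponent $10k-5-\Theta_k(5k-3)-\frac{\eps}{20}$, which exceeds $\Theta_k-\eps$ precisely when $\Theta_k<2-\frac{1}{5k-2}$ (this, not the tailoring inequality, is what forces the formula for $\Theta_k$), and (ii) the middle balanced term matches $d^{1/2}N^{k-\frac12-\frac{\Theta_k}{2}(k-1)}$. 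The correct thresholds are $N^{k\Theta_k-2k+1+\frac{\eps}{100}}$ and $N^{\Theta_k}/N^{k\Theta_k-2k+1+\frac{\eps}{100}}$. Once these two points are fixed, the rest of the bookkeeping goes through exactly as you outline.
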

Notice that Proposition~\ref{prop:charsumclaim} is actually the case $k = 2$ of the above proposition. The general case follows from a similar argument. In particular, the proof of Lemma~\ref{le:splitting} can be adapted to show the following result.
\begin{lemma}\label{le:splitting gen}
Let $\eps>0$ be fixed, let $k \geq 2$ be an even integer, let $N \geq 2$ be sufficiently large and let $N_1, \dotsc, N_{40} \geq \frac 12$ be such that
\begin{equation}
\label{eq:Niproductbound2 holom}
N_1 \dotsm N_{40} \leq N^{\Theta_k-\varepsilon}.
\end{equation}
Then at least one of the following holds.
\begin{enumerate}
\item[(a)] There exists a set $I\subseteq\{1,\dots,40\}$ such that 
\begin{equation*}
N^{k\Theta_k - 2k +1 +\frac \eps {100}} < \prod_{j\in I}N_j\leq \frac{N^{\Theta_k}}{N^{k\Theta_k - 2k +1 +\frac \eps {100}}}.
\end{equation*}
\item[(b)] There exist two distinct indices $
j_1,j_2 \in \{1, \dotsc, 40\}$ such that 
\begin{equation*}
\prod_{j \in\{1,\dotsc,40\}\setminus{\{j_1,j_2\}}}N_j\leq N^{k\Theta_k - 2k +1 +\frac \eps {100}}.
\end{equation*}
\end{enumerate}
\end{lemma}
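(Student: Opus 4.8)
The plan is to run the proof of Lemma~\ref{le:splitting} essentially unchanged, the only novelty being the bookkeeping of exponents, which is governed by the defining relation $\Theta_k=2-\frac1{5k-2}$. Put $\theta:=k\Theta_k-2k+1$ and $\lambda:=\Theta_k-\theta$, so that the window in part~(a) is $(N^{\theta+\eps/100},N^{\lambda-\eps/100}]$ (for $k=2$ one has $\theta=\tfrac34$, $\lambda=\tfrac98$, so this reduces to Lemma~\ref{le:splitting}(a)). Two elementary identities do the work. First, $\theta<\lambda$, which follows from $\Theta_k<2$ and makes the window nonempty. Second, and crucially, $\theta=\tfrac25\Theta_k$ — equivalently $5(k\Theta_k-2k+1)=2\Theta_k$, which is just the definition of $\Theta_k$ rearranged — whence $3\lambda-2\theta=3\Theta_k-5\theta=\Theta_k$.

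Assume, for contradiction, that neither (a) nor (b) holds. Among all subsets $I'\subseteq\{1,\dots,40\}$ with $P:=\prod_{j\in I'}N_j\le N^{\theta+\eps/100}$ (the empty set qualifies), pick one, $I$, with the largest number of elements, and set $J:=\{1,\dots,40\}\setminus I$. If $|J|\le 2$, a short check — distinguishing how many of the at most two indices one adjoins to $J$ to form a pair carry $N_j=\tfrac12$ rather than $N_j\ge1$ — produces two distinct indices $j_1,j_2$ with $\prod_{j\notin\{j_1,j_2\}}N_j\le N^{\theta+\eps/100}$, i.e.\ (b). Hence $|J|\ge3$. Next, for every $j\in J$ we must have $PN_j>N^{\lambda-\eps/100}$: if $PN_j\le N^{\theta+\eps/100}$ then $I\cup\{j\}$ contradicts the maximality of $I$, while if $N^{\theta+\eps/100}<PN_j\le N^{\lambda-\eps/100}$ then $I\cup\{j\}$ witnesses~(a). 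Therefore $N_j>N^{\lambda-\eps/100}/P\ge N^{\lambda-\theta-\eps/50}>1$ for $N$ large, since $\lambda-\theta=\tfrac15\Theta_k>0$.

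Combining $|J|\ge3$ with $P^2\le N^{2\theta+\eps/50}$ and $N^{\lambda-\eps/100}/P>1$ gives
\[
\prod_{j=1}^{40}N_j\ \ge\ P\prod_{j\in J}N_j\ >\ P\Big(\frac{N^{\lambda-\eps/100}}{P}\Big)^{|J|}\ \ge\ P\Big(\frac{N^{\lambda-\eps/100}}{P}\Big)^{3}\ =\ \frac{N^{3\lambda-3\eps/100}}{P^2}\ \ge\ N^{3\lambda-2\theta-\eps/20}\ =\ N^{\Theta_k-\eps/20},
\]
using the identity $3\lambda-2\theta=\Theta_k$ in the last step. Since $\eps/20<\eps$, this contradicts the hypothesis $N_1\dotsm N_{40}\le N^{\Theta_k-\eps}$ of~\eqref{eq:Niproductbound2 holom}, and the lemma follows. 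There is no genuine obstacle here: the argument is a line-by-line transcription of the proof of Lemma~\ref{le:splitting}, and the only thing demanding attention is the exponent collapse $3\lambda-2\theta=\Theta_k$ — it is precisely the point at which the exact value of $\Theta_k$, and hence the ceiling of this method, enters — together with the mildly fiddly $|J|\le2$ case of the step $|J|\ge3$, where one only needs to absorb harmless factors of $2$ coming from the dumb variables $N_j=\tfrac12$.
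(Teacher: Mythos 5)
Your proof is correct and follows essentially the same route as the paper's: pick a maximal subset $I$ with $P=\prod_{j\in I}N_j\le N^{k\Theta_k-2k+1+\eps/100}$, observe that the failure of (b) forces $|J|\ge3$ and the failure of (a) forces $N_j>N^{\lambda-\eps/100}/P$ for all $j\in J$, then multiply out and contradict~\eqref{eq:Niproductbound2 holom}. The only cosmetic difference is that you isolate the identity $5(k\Theta_k-2k+1)=2\Theta_k$ (equivalently $3\lambda-2\theta=\Theta_k$) up front, whereas the paper carries the exponent $10k-5-\Theta_k(5k-3)$ to the end and verifies the inequality there; the algebra is the same. Your remark about the $|J|\le2$ case and the factors of $2$ coming from $N_j=\tfrac12$ is a fair point — the paper brushes past it with ``(b) holds for any $\{j_1,j_2\}\supseteq J$'' — and your handling (absorb bounded constants into ``$N$ sufficiently large'') is the right fix.
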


\begin{proof}

We proceed as in the proof of Lemma~\ref{le:splitting}. We let $I$ be a subset of $\{ 1,\dots,40\}  $ containing the maximal number of elements such that 
$P:= \prod_{j\in I} N_j \leq N^{k\Theta_k-2k+1+\frac \eps {100}} $ holds. As in the proof of Lemma~\ref{le:splitting}, $|J| \geq 3$ and, for every $ j\in J = \{ 1,\dots,40 \} \setminus I$,
  \[
    N_j> \frac{N^{\Theta_k}}{P N^{k\Theta_k-2k+1+\frac \eps {100}}},
  \]
 and hence it follows that
\begin{align*}
  \prod_{j = 1}^{40} N_j> P \left(\frac{N^{\Theta_k}}{P N^{k\Theta_k-2k+1+\frac \eps {100}}}\right)^3 \geq \frac{N^{3\Theta_k}}{N^{5(k\Theta_k-2k+1+\frac \eps {100})}} = N^{10k-5 - \Theta_k(5k-3)-\frac{\varepsilon}{20}}.
\end{align*}
To conclude, we note that the above lower bound contradicts~\eqref{eq:Niproductbound2 holom} if the bound
\begin{align*}
  10k-5-\Theta_k(5k-3)-\frac{\varepsilon}{20} > \Theta_k-\varepsilon 
  \end{align*}
holds. Since this is equivalent to 
\begin{align*}
     \Theta_k < 2-\frac{1}{5k-2} + \frac{19}{20(5k-2)}\varepsilon,
\end{align*} the result follows by the definition of $\Theta_k$.
\end{proof}

Now, the proof of Proposition~\ref{prop:charsumclaim holom} proceeds similarly to the proof of Proposition~\ref{prop:charsumclaim}, with slightly different bounds, so we only explain the differences.
Notice first that the right-hand side of~\eqref{eq:charsumclaim holom} satisfies
\begin{align}\label{align:tailoring}
  \begin{aligned}
\frac{(N+d)^k N^{1-\frac{\Theta_k}{2}(k-1)}}{d} &\gg_k \frac{(d^2 N^{k-2}+ d^{\frac 32} N^{k-\frac 32} + dN^{k-1})N^{1-\frac{\Theta_k}{2}(k-1)}}{d} \\
    &= dN^{(k-1)(1-\frac{\Theta_k}{2})} + d^{\frac 12} N^{k-\frac{1}{2}-\frac{\Theta_k}{2}(k-1)} + N^{k-\frac{\Theta_k}{2}(k-1)}. 
  \end{aligned}
  \end{align}
Next, in case Lemma~\ref{le:splitting gen}(a) holds, we obtain the bound
$$\begin{aligned}
	&\int_{-\infty}^\infty\sumstar\left|\prod_{j\leq 40}\sum_{n_j\sim N_j}\frac{a_j(n_j)\chi(n_j)}{ n_{j}^{\frac 12+it}}\right|\frac{\d t}{t^2+1}\\
	&\ll \Bigg( d+d^{\frac 12}\prod_{j\in I}N_j^{\frac 12}+d^{\frac 12}\prod_{j\in J}N_j^{\frac 12}+ \prod_{j =1}^{40} N_j^{\frac 12}\Bigg) (\log(dN))^{O(1)} \\
  &\ll \left(d + d^{\frac 12}\left(\frac{N^{\Theta_k}}{N^{k\Theta_k - 2k +1 +\frac \eps {100}}}\right)^{ \frac 12}+N^{\frac {\Theta_k}2-\frac{\eps}2}\right) (\log(dN))^{O(1)}.
	\end{aligned}
	$$
Since $\Theta_k < 2$, we have $d \leq dN^{(k-1)(1-\frac{\Theta_k}{2})}$
and $N^{\frac{\Theta_k} 2} \leq N^{k-\frac{\Theta_k}{2}(k-1)}$ and thus by~\eqref{align:tailoring} the first and third terms give acceptable contributions to~\eqref{eq:charsumclaim holom}. Furthermore, we have
\[
  d^{\frac 12}\left(\frac{N^{\Theta_k}}{N^{k\Theta_k - 2k +1 +\frac \eps {100}}}\right)^{\frac 12} = d^{ \frac 12} N^{k-\frac{1}{2}-\frac{\Theta_k}{2}(k-1) -\frac{\varepsilon}{200}}
\]
and so, again by~\eqref{align:tailoring}, the second term also gives an acceptable contribution to~\eqref{eq:charsumclaim holom}.

Finally, in case Lemma~\ref{le:splitting gen}(b) holds, we obtain
$$
	\int_{-\infty}^{\infty}\sumstar\left|\prod_{j=1}^{40}\sum_{n_j\sim N_j}\frac{a_j(n_j)\chi(n_j)}{n_j^{\frac 12+it}}\right|\frac{\d t}{t^2+1}
	\ll\bigg( d+\prod_{j\notin\{j_1,j_2\}} N_j\bigg)^{\frac 12} (d+N^{\frac 15})^{\frac 12}(\log(dN))^{O(1)}.
$$
This yields a smaller contribution to~\eqref{eq:charsumclaim holom} than the case using Lemma~\ref{le:splitting gen}(a) and thus the claim in Proposition \ref{prop:charsumclaim holom} follows.

\end{document}